\documentclass[a4paper,11pt]{article}

%
%

\usepackage{makeidx}
\usepackage{amscd}
\usepackage{amsmath}
\usepackage{amssymb}
\usepackage{amsthm}

\usepackage[all]{xy}

\if0
\usepackage{showkeys}

\usepackage[dvipdfm,
 bookmarks=true,bookmarksnumbered=true,setpagesize=false,%
  colorlinks=true,%
  pdftitle={},%
  pdfsubject={},%
  pdfauthor={},%
  pdfkeywords={TeX; dvipdfmx; hyperref; color;},%
  colorlinks=false]{hyperref}
\fi
%
%

\theoremstyle{plain}
\newtheorem{thm}{Theorem}[section]
\newtheorem{cor}[thm]{Corollary}
\newtheorem{lem}[thm]{Lemma}
\newtheorem{cla}[thm]{Claim}

\newtheorem{prop}[thm]{Proposition}
\newtheorem{conj}[thm]{Conjecture}
\newtheorem{prob}[thm]{Problem}

%
%

\theoremstyle{definition}
\newtheorem{defn}[thm]{Definition}
\newtheorem{rem}[thm]{Remark}
\newtheorem{exa}[thm]{Example}

\newtheorem{defn-prop}[thm]{Definition-Proposition}

%
%
\newcommand{\PP}{\mathbb P}
\newcommand{\Z}{\mathbb Z}
\newcommand{\Q}{\mathbb Q}

\newcommand{\C}{\mathbb C}

\newcommand{\Image}{\mathop{\mathrm{Im}}\nolimits}

\newcommand{\id}{\ensuremath{\mathop{\mathrm{id}}}}

\newcommand{\rk}{\ensuremath{\operatorname{rank}}}

 \newcommand{\Cone}{\operatorname{Cone}} 
 
%
%

\newcommand{\Pic}{\mathop{\mathrm{Pic}}\nolimits}
\newcommand{\Ext}{\mathop{\mathrm{Ext}}\nolimits}
\newcommand{\Hom}{\mathop{\mathrm{Hom}}\nolimits}

\newcommand{\RHom}{\mathop{\mb R\mathrm{Hom}}\nolimits}

\newcommand{\Lotimes}{\stackrel{\mb L}{\otimes}}


\newcommand{\SL}{\operatorname{SL}}
\newcommand{\Coh}{\operatorname{Coh}}

\newcommand{\Auteq}{\operatorname{Auteq}}

\newcommand{\Aut}{\operatorname{Aut}} 
\newcommand{\ch}{\operatorname{ch}} 
\newcommand{\Kequiv}{\operatorname{K-equiv}}

\newcommand{\mc}{\mathcal}
\newcommand{\mb}{\mathbb}

\newcommand{\Supp}{\ensuremath{\operatorname{Supp}}}

\newcommand{\Br}{\ensuremath{\operatorname{Br}}}
\newcommand{\Comp}{\ensuremath{\operatorname{Comp}}}
\newcommand{\Span}[1]{\left<#1\right>}

%
\title{A trichotomy for the autoequivalence groups on  smooth projective surfaces}
\author{Hokuto Uehara}
\date{}
\pagestyle{plain}
%
%


\begin{document}
\maketitle
\begin{abstract}
We study autoequivalence groups of the derived categories on smooth projective surfaces, and 
show a trichotomy of types according to the maximal dimension of Fourier--Mukai kernels for autoequivalences. This number is $2$, $3$ or $4$, and 
we also pose a conjecture on the description of autoequivalence groups if it is $2$, 
and prove it in some special cases.
\end{abstract}


\section{Introduction}
The study of derived categories $D(X)=D^b(\Coh(X))$ of coherent sheaves on a smooth projective varieties $X$
has become an important topic in algebraic geometry 
over the last decades. It is an interesting and basic problem to describe the group $\Auteq D(X)$
 of autoequivalences of $D(X)$. In this article, we consider the autoequivalence group of smooth projective surfaces. 

Let us introduce an integer $N_X$, which plays the key role of a trichotomy of types of the 
autoequivalence group on smooth projective surfaces.
First recall that 
an Orlov's deep result states that every autoequivalence on a smooth projective variety $X$ is given by a 
 \emph{Fourier--Mukai transform} $\Phi^\mathcal{P}$ with unique kernel $\mc{P}\in D(X\times X)$ (see \S \ref{sec:FM}). 
Let us define
$$
\Comp (\Phi^\mc{P})
$$
to be the set of irreducible components $W_0$ of $\Supp(\mc{P})$ which dominates $X$ by the first projection $p_1\colon X\times X\to X$,
which turns out to be non-empty by \ref{lem:fundamental_FM} (i)).
Define
\begin{align*}
N_X:&=\max \{\dim (W_0) \mid W_0\in \Comp (\Phi^\mc{P}) \text{ for some }\Phi^{\mc{P}}\in \Auteq D(X) \}\\
     &\in\{\dim (X),\dim (X)+1,\ldots , 2\dim (X)\}
\end{align*}
and call it the \emph{Fourier--Mukai support dimension} of $X$.

For smooth projective surfaces $S$,  (conjectural) descriptions of 
the group $\Auteq D(S)$ and the geometry of surfaces
are quite different, depending on the value $N_S$. 
The following is the first main result in this article.  


\begin{thm}[=Theorem \ref{thm:trichotomy}]\label{thm:trichotomy0}
We have the following.
\begin{enumerate}
\item
$N_S=4$ if and only if $K_S\equiv 0$. 
\item
$N_S=3$ if and only if $S$ has a minimal elliptic fibration and $K_S\not \equiv 0$. 
\item
$N_S=2$ if and only if $S$ has no minimal elliptic fibration and $K_S\not \equiv 0$. 
\end{enumerate}
\end{thm}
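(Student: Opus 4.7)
The three conditions on $S$ in (i)--(iii) form a disjoint exhaustive partition of smooth projective surfaces, so it suffices to prove the three ``if'' directions; the ``only if'' parts then follow tautologically. My plan splits into three independent constructions and bounds: exhibit kernels with $4$-dimensional support when $K_S \equiv 0$; exhibit a kernel with $3$-dimensional support from a minimal elliptic fibration; and bound $\dim W_0 \leq 2$ when neither condition holds.

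\textbf{Existence side, cases (i) and (ii).} For (i), I would invoke the Enriques--Kodaira classification: a surface with $K_S \equiv 0$ is K3, abelian, Enriques, or bielliptic. For abelian $S$, the Poincar\'e bundle on $S \times \widehat S$ composed with an isomorphism $\widehat S \cong S$ (after a choice of polarization) produces an autoequivalence whose kernel is supported on the full $4$-dimensional product. For K3, I would use Mukai's construction of moduli spaces of stable sheaves that are again isomorphic to $S$, with the universal sheaf serving as a $4$-dimensionally supported kernel. For Enriques and bielliptic, the analogous kernels descend from the corresponding K3 / abelian \'etale cover. For (ii), given a minimal elliptic fibration $\pi \colon S \to C$, the relative Poincar\'e sheaf on a suitable relative compactified Jacobian yields an autoequivalence of $D(S)$ whose kernel is supported on $S \times_C S$, of dimension $3$; combined with the upper bound from (iii) below this forces $N_S = 3$.

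\textbf{Upper bounds.} For the $\Leftarrow$ direction of (iii), assume $K_S \not\equiv 0$ and $S$ admits no minimal elliptic fibration, and suppose for contradiction some $\Phi^{\mc P} \in \Auteq D(S)$ has $W_0 \in \Comp(\Phi^{\mc P})$ of dimension $\geq 3$. The Serre functor of $D(S)$ commutes with $\Phi^{\mc P}$, which on the level of kernels translates, up to shift, into the relation $\mc P \Lotimes p_1^* \omega_S \simeq \mc P \Lotimes p_2^* \omega_S$; restricting to $W_0$ gives the numerical identity $p_1^*\omega_S|_{W_0} \equiv p_2^*\omega_S|_{W_0}$. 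If $\dim W_0 = 4$, intersecting with generic $p_1$-fibers forces $\omega_S \equiv 0$, contradicting $K_S \not\equiv 0$; this simultaneously yields the $\Leftarrow$ direction of (i) in contrapositive form. If $\dim W_0 = 3$, a generic $p_1$-fiber of $W_0$ is a curve $F \subset S$ with $\omega_S \cdot F = 0$ and $F^2 = 0$ (the latter from $W_0$ dominating the first factor). By the Hodge-index theorem and the adjunction formula, $F$ is an elliptic curve; the family of such $F$ parameterized by $p_1(W_0) = S$ produces a fibration of $S$ by elliptic curves, and checking that a relatively minimal model can be realized on $S$ itself yields a minimal elliptic fibration, a contradiction.

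\textbf{Expected main obstacle.} The hardest step will be the last one: turning the numerical data ``$\omega_S \cdot F = 0$, $F^2 = 0$, $F$ varying with parameter'' into an honest morphism $S \to C$ whose generic fiber is an elliptic curve, and ensuring relative minimality can be achieved without passing to a different surface. This will likely require a careful Stein factorization of $W_0 \hookrightarrow S \times S \xrightarrow{p_2} S$, together with input from the classification (to handle low-Kodaira-dimension cases such as ruled surfaces separately) and a rigidity argument ruling out blown-up $(-1)$-curve configurations in the putative fibers.
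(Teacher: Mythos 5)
Your overall architecture matches the paper's: since the three right-hand conditions partition all smooth projective surfaces, it suffices to prove the three ``if'' directions; one then constructs $4$- and $3$-dimensional kernels from the classification and from relative Jacobians, and rules out $\dim (W_0)\ge 3$ otherwise. The existence half is essentially correct modulo details (the paper uses the ideal sheaf $I_{\Delta_S}$ for K3 surfaces, the universal sheaf on $J_S(1)\times S$ with $J_S(1)\cong S$ for elliptic surfaces, and for bielliptic surfaces composes the transforms attached to the two elliptic fibrations rather than descending from the abelian cover --- descent requires a $G$-linearization you have not supplied). Your $\dim (W_0)=4$ bound is also fine and is equivalent to the paper's use of Lemma \ref{lem:fundamental_FM} (iii).

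There is, however, a genuine gap in the $\dim (W_0)=3$ case, exactly at the step you flag as hardest. You assert that a generic $p_1$-fiber $F=W_{0x}$ satisfies $F^2=0$ ``from $W_0$ dominating the first factor''; this does not follow. Algebraic equivalence of the fibers only gives $F^2=W_{0x}\cdot W_{0y}$, which can be strictly positive, and the Hodge index theorem then yields only $K_S^2<0$ --- not a contradiction for a general (e.g.\ non-minimal or ruled) surface. The paper closes this by first establishing, via Corollary \ref{cor:maximum} (ii) (which rests on the Kawamata-type arguments of Lemma \ref{lem:kawamata} and Proposition \ref{prop:kappa_codim}), that $\dim (W_0)=3$ and $K_S\not\equiv 0$ force either $K_S$ nef with $\nu(S)=1$ --- in which case the classification already produces the minimal elliptic fibration --- or $-K_S$ nef with $\nu(-K_S)=1$. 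Only in the latter situation does $K_S^2\ge 0$ make the Hodge index argument give $W_{0x}^2=0$, and only the nefness of $-K_S$ supplies a rational curve $C$ with $K_S\cdot C<0$, which is what makes the divisors $W_{0x}$, $x\in C$, \emph{linearly} (not merely algebraically) equivalent and lets one prove $\bigcap_{x\in C}W_{0x}=\emptyset$ from the Calabi--Yau property of $\Phi^{-1}(\mc{O}_z)$. Without a base-point-free linear system there is no morphism to run Stein factorization on, and your proposed substitutes (Stein factorization of $p_2|_{W_0}$, adjunction applied to a possibly reducible, non-reduced fiber, a rigidity argument on $(-1)$-curves) do not produce the fibration. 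The missing idea is thus the nef/numerical-dimension dichotomy of the support-of-kernel section; once it is in place, the remainder of your sketch goes through essentially as in the paper.
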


In the case $N_S=4$, Theorem \ref{thm:trichotomy0} implies that 
$S$ is one of K3, abelian, bielliptic or Enriques surfaces.
Bayer and Bridgeland describe the auto\-equivalence group of  K3 surfaces with the Picard number $1$ \cite{BB17}.
Orlov finds a description of  the autoequivalence group of  abelian varieties (not necessarily surfaces) \cite{Or02}.
Recently, Potter finds a description of the autoequivalence group of  bielliptic surfaces \cite{Po17}. 

Let us consider the case $N_S=3$. In this case, Theorem \ref{thm:trichotomy0} implies that
 $S$ has a minimal elliptic fibration $\pi\colon S\to C$ and $K_S\not\equiv 0$. 
Suppose furthermore that each reducible fiber of $\pi$ is non-multiple, and forms a cycle of $(-2)$-curves,
 i.e. it is of type $\mathrm{I}_n$ for some $n>1$. 
Then, the autoequivalence group $\Auteq D(S)$ is described in \cite{Ue16}.
See also Conjecture \ref{conj:A}.

Finally, let us consider the case $N_S=2$.  
Let us set $Z$ the union of all $(-2)$-curves on $S$, and define
$$
\Br_Z(S)=\Span{T_{\alpha}\mid \alpha \in D_Z(S) \text{ spherical object}}(\subset \Auteq D(S)).
$$
Here, a functor $T_{\alpha}$ is a special kind of an autoequivalence, called a \emph{twist functor} (see \S \ref{subsec:twist_functor}).
Then, we pose the following conjecture:


\begin{conj}[cf. Conjecture \ref{conj:generation}]\label{conj:B}
If $N_S=2$, then we have
$$
\Auteq D(S)=\Span{\Br_Z(S), \Pic (S)} \rtimes \Aut (S) \times \Z[1].
$$
\end{conj}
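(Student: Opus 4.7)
The plan is to take an arbitrary autoequivalence $\Phi = \Phi^{\mc{P}} \in \Auteq D(S)$ and, using the hypothesis $N_S = 2$, to show that after composing with suitable elements of the subgroup $G := \Span{\Br_Z(S), \Pic(S)} \rtimes \Aut(S) \times \Z[1]$ on the right-hand side, we obtain the identity functor. This is the ``point-like object'' strategy in the spirit of Bondal--Orlov and Bridgeland--Maciocia, adapted to non-minimal surfaces with $K_S \not\equiv 0$ and $(-2)$-curves.

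The first step is to exploit $N_S = 2$ together with the analogous statement for $\Phi^{-1}$: every component $W_0 \in \Comp(\Phi^{\mc{P}})$ has $\dim W_0 = 2 = \dim S$, and applying the hypothesis to the inverse kernel shows that $W_0$ must also dominate $S$ via $p_2$. Hence each such component is the closure of the graph of a birational self-correspondence of $S$. Using the Fourier--Mukai formalism of \S\ref{sec:FM} together with semicontinuity of the cohomology sheaves of $\Phi(\mc{O}_x)$ as $x$ varies, I would then argue that for a very general point $x \in S$ one has $\Phi(\mc{O}_x) \cong \mc{O}_{f(x)} \otimes L[n]$ for a well-defined birational self-map $f$ of $S$, a fixed line bundle $L \in \Pic(S)$ and an integer $n$.

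The next step is to ``regularize'' $f$ using spherical twists. Since $N_S = 2$ implies $K_S \not\equiv 0$ and $S$ admits no minimal elliptic fibration (Theorem \ref{thm:trichotomy0}), the only curves that $f$ or $f^{-1}$ can contract are $(-2)$-curves, that is, components of $Z$. A careful analysis of how $\Phi$ acts on the spherical objects $\mc{O}_C(k)$ and their iterated extensions supported on chains of $(-2)$-curves should show that $f$ becomes biregular after composing $\Phi$ with an appropriate element of $\Br_Z(S)$. Then composing with $f^{-1} \in \Aut(S)$, with $L^{-1} \otimes (-)$ and with $[-n]$ reduces us to an autoequivalence $\Phi'$ with $\Phi'(\mc{O}_x) \cong \mc{O}_x$ for very general $x$, at which point a standard Bondal--Orlov-type argument should force $\Phi' \cong \id_{D(S)}$, yielding $\Phi \in G$.

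The hard part will be the spherical-twist step: identifying the precise braid-group element of $\Br_Z(S)$ to apply so as to absorb all the birational indeterminacy of $f$. For simple configurations of $(-2)$-curves (a single chain of type $A_n$, $D_n$ or $E_n$) the spherical subcategory and its braid relations are classical, but on a general surface with $N_S = 2$ the configuration of $(-2)$-curves can be quite complicated, and $f$ may exhibit indeterminacy along several disjoint chains simultaneously. The heart of the conjecture is thus a structural claim that $\Br_Z(S)$ is rich enough to realize every possible birational modification supported along $Z$; establishing this in full generality will presumably require global geometric input beyond the local point-like-object analysis, which is why only partial cases are known.
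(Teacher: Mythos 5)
The statement you are trying to prove is stated in the paper as a \emph{conjecture}, and the paper itself only establishes it under the extra hypothesis that $Z$ is a disjoint union of type~$A$ configurations of $(-2)$-curves (Theorem \ref{thm:generation}). Your outline correctly identifies the overall ``point-like object'' strategy and, to your credit, correctly locates the missing ingredient in the spherical-twist step; but as written it is a plan with an acknowledged hole, not a proof, and the hole is exactly the one the paper leaves open. Two specific inaccuracies: first, your framing of the problem as ``regularizing a birational self-map $f$'' misplaces where the difficulty sits. The paper shows (Claims \ref{cla:pure_rigid}--\ref{cla:Z_to_Z} and the proof of Theorem \ref{thm:autoeq_nontorsion}) that for \emph{every} $x\in S\setminus Z$, not just very general $x$, one has $\Supp(\Phi(\mc{O}_x))\cap Z=\emptyset$ and $\Phi(\mc{O}_x)$ is a shifted skyscraper, so there is no birational indeterminacy away from $Z$ at all; the entire difficulty is concentrated in what $\Phi$ does to objects of $D_Z(S)$. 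Second, your claim that the required statement is ``classical'' for single chains of type $D_n$ or $E_n$ is not correct: what is needed is not the braid relations among the twists but a transitivity statement --- that $\Br_Z(S)$ acts transitively enough to return $\Phi(\mc{O}_x)$, $x\in Z$, to a shifted skyscraper --- and this is precisely the Key Proposition of \cite{IU05}, which is only available for type $A$. That is why the theorem is stated with the type~$A$ hypothesis.

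Structurally, the paper's partial proof is also organized differently from your sketch, in a way worth internalizing. Rather than analyzing a single autoequivalence end to end, it first builds the restriction homomorphism $\iota_Z\colon \Auteq_{\Kequiv}D(S)\to \Auteq D_Z(S)$ (Proposition \ref{prop:S_to_Z}), computes its kernel exactly as $\Pic_Z(S)\rtimes\Aut_Z(S)$ (Theorem \ref{thm:autoeq_nontorsion}, using \cite[Lemma 4.3]{Br99} to globalize the pointwise statement into flatness of the kernel over $S\setminus Z$), and thereby reduces Conjecture \ref{conj:B} to a statement purely about the image $\Auteq^\dagger_{\Kequiv}D_Z(S)$ (Conjecture \ref{conj:generation}). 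Only then does it invoke Lemma \ref{lem:sheaf}~(ii) to produce $\Psi\in\Br_Z(S)$ with $\Psi\circ\Phi(\mc{O}_x)\cong\mc{O}_y[i]$ for $x\in Z$, and conclude via \cite[Corollary 5.23]{Hu06}. If you want to push beyond type $A$, the problem you must solve is exactly the extension of \cite[Key Proposition]{IU05} to more general configurations of $(-2)$-curves; everything else in your outline is already in place in the paper.
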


The classical Bondal--Orlov Theorem states that 
if $\pm K_X$ is ample for a smooth projective variety $X$,
we have 
$$
\Auteq D(X)=\Pic (S) \rtimes \Aut (S) \times \Z[1].
$$
Because there are no $(-2)$-curves on a smooth projective surface $S$ with ample $\pm K_S$ 
we can regard Conjecture \ref{conj:B} as a variant  of their result.
The following is the second main result of this article.

\begin{thm}[cf. Theorem \ref{thm:generation}]\label{thm:generation0}
Let $S$ be a smooth projective surface with $N_S=2$.
Then Conjecture \ref{conj:B} holds true,
 if 
$Z$ is a disjoint union of configurations of $(-2)$-curves of type $A$.
\end{thm}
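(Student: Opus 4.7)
Given $\Phi = \Phi^{\mc P} \in \Auteq D(S)$, my plan is a standard three-step reduction: first use $N_S = 2$ to normalize $\Phi$ modulo $\Aut(S) \times \Z[1]$ so that it fixes generic skyscraper sheaves; then use the type A hypothesis to absorb the residual action over $Z$ via $\Br_Z(S) \cdot \Pic(S)$.

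I would begin with the image of a generic point. Since every $W_0 \in \Comp(\Phi^{\mc P})$ has $\dim W_0 = 2$, the projection $p_1|_{W_0} \colon W_0 \to S$ is generically finite, so $\Phi(\mc O_x)$ has zero-dimensional support for $x$ in a dense open $U \subseteq S$. Combined with preservation of the endomorphism algebra $\End(\mc O_x) = k$, this yields $\Phi(\mc O_x) \cong \mc O_{f(x)}[n]$ for a rational map $f$ and a fixed integer $n$. The hypothesis $N_S = 2$ (equivalently, $K_S \not\equiv 0$ and no minimal elliptic fibration) is crucial here to rule out pathologies---relative Fourier--Mukai transforms along elliptic fibers, or translations on abelian surfaces---that would allow $f$ to fail to extend to an automorphism. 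Extending $f$ to $\bar f \in \Aut(S)$ and composing $\Phi$ with $(\bar f^*)^{-1} \circ [-n]$, I may assume $\Phi(\mc O_x) \cong \mc O_x$ for all $x \in U$.

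Next, I would analyze the residual $\Phi$ over $Z$. The type A hypothesis is essential: each connected component of $Z$ is an $A_n$-chain of $(-2)$-curves, for which the autoequivalence group of the subcategory $D_Z(S)$ is generated, modulo tensoring by line bundles and shifts, by $\Br_Z(S)$ (following Seidel--Thomas and Ishii--Uehara). I would show that $\Phi$ preserves $D_Z(S)$, express its restriction there as $T \circ (L' \otimes -) \circ [m]$ for some $T \in \Br_Z(S)$, $L' \in \Pic(S)$, and $m \in \Z$, and verify that after cancelling this factor the remaining autoequivalence fixes every skyscraper sheaf on $S$. By Bondal--Orlov style rigidity it is then a line bundle twist, completing the reduction.

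The main obstacle is this second stage: gluing local descriptions of autoequivalences on each $A_n$-chain into a single global element of $\Br_Z(S)$, tracking the induced permutation of the spherical objects supported on $Z$, and realizing it via type A braid relations. A further delicate point is the interaction between $\Br_Z(S)$ and $\Pic(S)$ needed to verify the asserted semidirect product structure: restrictions of line bundles to $Z$ decompose as products of spherical twists, and this decomposition must be pinned down precisely. The type A hypothesis makes each local braid group the classical $B_{n+1}$, where such computations remain tractable; other Dynkin types would require substantially more machinery.
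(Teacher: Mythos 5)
Your outline follows essentially the same route as the paper: show that a generic skyscraper goes to a shifted skyscraper, reduce the residual problem to the category $D_Z(S)$, and use the Ishii--Uehara type~$A$ machinery to correct by $\Br_Z(S)$ before applying the standard criterion that an equivalence sending all $\mc{O}_x$ to shifted skyscrapers is a composition of an automorphism, a line-bundle twist and a shift. Two remarks on the execution. First, the ``main obstacle'' you identify (a full description of $\Auteq D_Z(S)$, gluing local braid-group data, and pinning down the interaction of $\Br_Z(S)$ with $\Pic(S)$) is more than the paper actually needs: Lemma \ref{lem:sheaf}~(ii), which adapts the Key Proposition of \cite{IU05}, only asserts that for each $x\in Z$ some $\Psi\in\Br_Z(S)$ brings $\Phi(\mc{O}_x)$ back to a shifted skyscraper, given that $\Phi$ preserves $\ch(\mc{O}_x)$ --- a hypothesis automatic for restrictions of global autoequivalences but which you should not omit, since it is exactly where \cite[Condition 7.5]{IU05} is verified. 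Once all skyscrapers are normalized, \cite[Corollary 5.23]{Hu06} produces the automorphism, line bundle and shift simultaneously, and the semidirect-product structure is packaged in the exact sequence of Theorem \ref{thm:autoeq_nontorsion} with kernel $\Pic_Z(S)\rtimes\Aut_Z(S)$; no braid relations in $B_{n+1}$ are computed. Second, your step of extracting $\bar f\in\Aut(S)$ from the generic behaviour before treating $Z$ needs an argument (the birational map given by $W_0$ is a priori only a $K$-equivalence; for surfaces this does force it to be biregular, but you should say so). The paper sidesteps this by deferring the extraction of the automorphism to the very last step, after all skyscrapers --- not just generic ones --- have been handled. With the citation to the Key Proposition supplied in place of your ``main obstacle,'' your plan closes up into the paper's proof.
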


Theorem \ref{thm:generation0} is a generalization of \cite[Theorem 1.5]{IU05} and \cite[Theorem 1]{BP14}.
We show Theorem \ref{thm:generation}, which is slightly stronger than Theorem \ref{thm:generation0}.

\paragraph{Notation and conventions.}\label{para:notation_convention}
We follow the notation and terminology of \cite{Ha77} unless otherwise stated.
All varieties will be defined over the complex number field $\C$ in this article.
A \emph{point} on a variety will always mean a closed point.

By a \emph{minimal elliptic surface}, we will
always mean a smooth projective surface $S$ together with a smooth projective
curve $C$ and a relatively minimal morphism $\pi\colon S\to C$ whose general fiber is an elliptic curve. Here  a \emph{relatively
minimal morphism} means a morphism whose fibers contains no $(-1)$-curves. Such a morphism $\pi$ is 
called an \emph{minimal elliptic fibration}.

We  denote by $D(X)$ the bounded derived category of coherent sheaves on an algebraic variety $X$
For any subset $Z(\subset X)$, we denote the full triangulated subcategory of $D(X)$ consisting of objects supported on $Z$ by $D_Z(X)$.
Here, the support of an object $\alpha\in D(X)$ is, by definition, the union of the set-theoretic supports of its cohomology sheaves $\mc{H}^i (\alpha)$.  Note that the support is always closed subset because $\alpha$ is a bounded complex of coherent sheaves.
We denote the dimension of the support of $\alpha$ by $\dim (\alpha)$. 

An object $\alpha$ in $D(X)$ is said to be \emph{rigid} if $\Hom^1_{D(X)}(\alpha,\alpha)= 0$.

Given a closed embedding of schemes  $i\colon Z\hookrightarrow X$,
we denote the derived pullback 
$\mb Li^*\alpha$ simply by $\alpha|_Z$. 

For algebraic varieties $X,Y$,
we denote the diagonal in $X\times X$ by $\Delta_{X}$, 
and denote the projections by $p_X\colon X\times Y\to X$ and $p_Y\colon X\times Y\to Y$,
or $p_1\colon X\times Y\to X$ and $p_2\colon X\times Y\to Y$.

For an abelian variety $X$, we denote the dual variety $\Pic ^0X$ by $\widehat{X}$.

$\Auteq \mathcal{T}$ denotes the group of isomorphism classes of $\C$-linear exact autoequivalences of a $\C$-linear triangulated category $\mathcal{T}$.

For a Cartier divisor $D$ on a normal projective variety $X$,
we define a graded $\C$-algebra by  
$$
R(X,D):=\bigoplus _{m\ge 0}H^0(X,\mc{O}_X(mD)).
$$
Recall that the  \emph{Iitaka dimension} $\kappa (X,D)=\kappa (D)(\in \{-\infty, 0, 1,\ldots, \dim (X)\})$ of $D$ is
$$
\kappa (D):=
\begin{cases}
\text{ the transcendence degree of } R(X,D) -1 &\quad \text{ if } R(X,D)\ne \C
\\
-\infty &\quad \text{ otherwise.}
\end{cases}
$$
We call $\kappa(K_X)$ the \emph{Kodaira  dimension} of $X$, 
and simply denote it by $\kappa (X)$.
Assume furthermore that $D$ is a nef divisor. 
Then,  recall the \emph{numerical Iitaka dimension} $\nu (X,D)=\nu (D)(\in \{0, 1,\ldots, \dim (X)\})$ of $D$  by
$$
\nu (D):=\max \{k \in \Z \mid D^k\cdot H^{\dim (X)-k}\ne 0\},
$$
where $H$ is an ample divisor on $X$. In general, it is known that the inequality
$$
\nu (D)\ge \kappa (D)
$$
holds.

Let $X$ be a minimal model, that is, $X$ is a normal projective variety with $\Q$-factorial terminal singularities and $K_X$ is nef.  
We call $\nu (K_X)$  the \emph{numerical Kodaira  dimension} of $X$, 
and simply denote it by $\nu (X)$.
The \emph{abundance conjecture} states that  if $X$ is a minimal model, then the equality 
$\kappa(X)=\nu (X)$ holds. It is known to be true for surfaces and $3$-folds.
See \cite{KMM} for these terminology and results.

\if0

\subsection{Acknowledgments} 
The author is supported by the Grants-in-Aid 
for Scientific Research (No.23340011). 
\fi

\section{Fourier--Mukai transforms}\label{sec:preliminaries}
\subsection{Fourier--Mukai transforms}\label{sec:FM}
Let $X$ and $Y$ be smooth projective varieties.
For an
object $\mathcal{P}\in D(X\times Y)$, we define an exact functor $\Phi^{\mathcal{P}}$, 
called the \emph{integral functor} with \emph{kernel} $\mathcal{P}$, by
$$
\Phi^{\mathcal{P}}:= 
\mathbb{R}p_{Y*}(\mathcal{P}\Lotimes p^{*}_X(-))\colon D(X)\to D(Y).
$$
We also sometimes write $\Phi^{\mathcal{P}}$ as $\Phi^{\mathcal{P}}_{X\to Y}$ to emphasize that it is a functor from $D(X)$ to $D(Y)$.

By the result of Orlov (see \cite[Theorem 5.14]{Hu06}),
for a fully faithful functor $\Phi\colon D(X)\to D(Y)$,
there is an object $\mathcal{P}\in D(X\times Y)$, unique up to isomorphism, such that 
$
\Phi\cong \Phi^{\mathcal{P}}.
$
If an integral functor $\Phi^{\mc{P}}$ is an equivalence,
it is called a \emph{Fourier--Mukai transform}.

Note that every autoequivalence is given as an integral functor by the Orlov's result, and  hence let us consider \emph{standard autoequivalences}  as examples of Fourier--Mukai transforms.
The autoequivalence group $\Auteq D(X)$ always contains the group
$$
A(X):=\Pic (X) \rtimes \Aut (X)\times \Z[1],
$$
generated by standard autoequivalences, namely the functors of tensoring with line bundles, push forward along automorphisms, 
and the shift functor $[1]$.
Any standard equivalence $\Phi$ are given by the following form;
$$
\Phi=\varphi_*\circ ((-)\otimes \mc{L}) \circ [i]
$$
for an automorphism $\varphi$, an integer $i$ and a  line bundle $\mc{L}$.
Then, $\Phi$ is the Fourier--Mukai transform with the kernel 
\begin{equation}\label{eqn:standard}
(\mc{O}_{\Gamma_\varphi} \otimes p_1^*\mc{L})[i],
\end{equation}
whose support is $\Gamma_\varphi$, where $\Gamma_\varphi$ is the graph of $\varphi$.

Every Fourier--Mukai transform $\Phi^\mc{P}$ induces a \emph{cohomological Fourier--Mukai transform}
$$
\Phi^{\mathcal{P},H}\colon H^*(X,\Q)\to H^*(Y,\Q),
$$
which is an isomorphism of the total cohomologies, 
and the commutativity
$$
\Phi^{\mathcal{P},H}\circ v(-)= v(-)\circ\Phi^{\mathcal{P}}
$$
holds (see \cite[\S 5.2]{Hu06}).
Here we put $v(-):=\ch(-)\sqrt{\text{td}(X)}$.

If there exists a Fourier--Mukai transform between $D(X)$ and $D(Y)$, 
then we call $X$ a \emph{Fourier--Mukai  partner} of $Y$.

\subsection{Calabi--Yau objects and spherical objects}\label{subsec:twist_functor}
Let $X$ be a smooth projective variety. An object $\alpha\in D(X)$ is called a \emph{Calabi--Yau object} if it satisfies  
\begin{equation}\label{eqn:alpha_omega}
\alpha\otimes \omega_X\cong \alpha.
\end{equation}
For example, a $0$-dimensional sheaf on a smooth projective variety and a line bundle $\mc{L}$ 
on a $(-2)$-curve $C$ on a smooth projective surface are  Calabi--Yau objects.

Take a Calabi--Yau object $\alpha$, an autoequivalence $\Phi\in \Auteq D(X)$ and a closed subscheme $D$ of $\Supp(\mc{P})$. Then 
all cohomology sheaves 
$
\mc{H}^i( \alpha)
$
and $\alpha|_D$ are Calabi--Yau objects.
It is known that the Serre functor $(-)\otimes \omega _{X}[\dim (X)]$ commutes with the equivalence $\Phi$
 (cf.~\cite[Lemma~1.30]{Hu06}),
and thus, $\Phi(\alpha)$ is also a Calabi--Yau object.

Next, let us consider a sheaf $\mc{F}\in \Coh (X)$ which is a Calabi--Yau object (we call it a \emph{Calabi--Yau sheaf}).
Then, we have 
$$
\ch (\mc{F})=\ch (\mc{F})\cdot \ch (\omega_X)=\ch (\mc{F})\cdot (1+c_1(\omega_X) +\frac{1}{2}c_1(\omega)^2+\cdots)
$$ 
and hence,
\begin{align}\label{ali:CY-property}
0=\ch (\mc{F})\cdot  (c_1(\omega_X) +\frac{1}{2}c_1(\omega)^2+\cdots).
\end{align}

For a Calabi--Yau  object $\alpha\in D(X)$ 
and an irreducible curve  $C$ contained in $\Supp (\alpha)$,
every cohomology sheaf $\mc{H}^i(\alpha|_C)$ is a Calabi--Yau sheaf. Hence, 
equality \eqref{ali:CY-property} yields 
\begin{equation*}
K_X\cdot C=0.
\end{equation*}

If there exists a Calabi--Yau object $\alpha$ in $D(X)$ with $\Supp (\alpha)=X$,
we can find $i\in \Z$ such that $\rk \mc{H}^i(\alpha)>0$. Since $\mc{H}^i(\alpha)$ is also a Calabi--Yau sheaf, equality \eqref{ali:CY-property} implies that $c_1(\omega _X)$ is torsion.

Next we introduce an important class of examples of autoequivalences. 
We say that an object $\alpha \in D(X)$ is \emph{spherical} if 
$\alpha$ is a Calabi--Yau object and it satisfies 
$$
\Hom^{k}_{D(X)}(\alpha,\alpha)\cong\begin{cases}  0 & (k\ne 0,\dim (X))\\
                                                \C & (k=0,\dim (X)). 
\end{cases}
$$
For example, a line bundle on a K3 surface $X$ and a line bundle $\mc{L}$ on a 
$(-2)$-curve on a smooth projective surface $X$ are  spherical objects in $D(X)$ (see \cite[\S 2.2]{Ue16}). 

Put $X=X_1=X_2$.
For a spherical object $\alpha\in D(X)$, we consider the mapping cone 
\begin{equation}\label{eqn:kernel_twist}
\mc{C}:=\Cone(p_1^*\alpha^\vee\Lotimes p_2^*\alpha \to \mc{O}_{\Delta_X})\in D(X_1\times X_2)
\end{equation}
of the natural evaluation $p_1^*\alpha^\vee\Lotimes p_2^*\alpha \to \mc{O}_{\Delta_X}$.
Then the integral functor 
$T_{\alpha}:=\Phi^{\mc{C}}_{X_1\to X_2}$ defines an autoequivalence of $D(X)$,
called the \emph{twist functor} along the spherical object $\alpha$ 
(cf.~\cite[Proposition 8.6]{Hu06}). By \eqref{eqn:kernel_twist},  
there is a exact triangle 
\begin{equation}\label{eqn:twist_triangle}
\RHom_{D(X)}(\alpha,\beta)\Lotimes_\mathbb{C} \alpha\to \beta \to T_{\alpha} (\beta)
\end{equation}
for $\beta\in D(X)$.

\subsection{Fourier--Mukai transforms on elliptic surfaces}\label{subsec:bridgeland}
Refer \cite{Br98} to the results in this subsection.
Let $\pi\colon S\to C$ be a minimal elliptic surface. 
For an object $E$ of $D(S)$, we define the  \emph{fiber degree} of $E$ as
\[d(E)=c_1(E)\cdot F, \]
where $F$ is a general fiber of $\pi$. 
Let us denote  by $\lambda_{S}$  
the highest common factor of the fiber degrees of objects of $D(S)$. 
Equivalently,
$\lambda_{S}$ is the smallest number $d$ such that there exists a 
holomorphic $d$-section of $\pi$. 
Consider an integer $b$ coprime to $\lambda_{S}$. 
There exists a smooth,
$2$-dimensional component $J_{S/C}(b)=J_S (b)$ of the moduli space of pure one-dimensional 
stable sheaves on $S$,
the general point of which represents a rank $1$, degree $b$ stable 
vector bundle supported on a smooth fiber of $\pi$. 
There is a natural morphism $J_S (b)\to C$, taking a point representing
 a sheaf supported on the
fiber $\pi ^{-1}(x)$ of $S$ to the point $x$. This morphism is a minimal 
elliptic fibration.
Obviously, $J_S(0)\cong J(S)$, the Jacobian surface associated to $S$, 
and $J_S(1)\cong S$. 

There exists a universal sheaf $\mathcal{U}$ on $J_S(b)\times_C S$
such that the resulting functor $\Phi ^{\mathcal{U}}_{J_S(b)\to S}$ is an equivalence.

Let us set $Z$ the union of all $(-2)$-curves on $S$. Define
$$
\Br_Z(S)=\Span{T_{\alpha}\mid \alpha \in D_Z(S) \text{ spherical object}}(\subset \Auteq D(S)),
$$
and denote the congruence subgroup of $\SL (2,\Z)$ by
$$
\Gamma_0(m):=\Bigl\{ \begin{pmatrix}
c& a\\
d& b   
\end{pmatrix}\in \SL (2,\Z)\bigm|d\in m\Z \Bigr\}
$$
for $m\in \Z$.  
Then, we pose the following conjecture.


\begin{conj}\label{conj:A}
Suppose that a smooth projective surface $S$ has a minimal elliptic fibration $\pi\colon S\to C$ 
and $K_S\not\equiv 0$. Then, we have a short exact sequence
\begin{align*}
1\to \Span{\Br_Z(S),\otimes \mathcal{O}_S(D)\mid D\cdot F=0, \text{ $F$ is a fiber}}\rtimes \Aut (S)\times  
\Z[2]
\to
\Auteq D(S)& \notag\\
\stackrel{\Theta}\to 
\Big\{ \begin{pmatrix}
c& a\\
d& b   
\end{pmatrix}\in \Gamma_0(\lambda_{S}) \bigm| J_S(b)\cong S \Big\}
&\to 1.
\end{align*}
Here 
$\Theta$ is induced by the action of 
$\Auteq D(S)$ on the 
even degree part $H^0(F,\Z)\oplus H^2(F,\Z)\cong \Z^2$ of the integral cohomology group of  a smooth fiber $F$.
\end{conj}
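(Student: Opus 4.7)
The plan is to analyze how autoequivalences act on a smooth fiber, use Bridgeland's classification of relative Fourier--Mukai kernels on elliptic surfaces to obtain surjectivity, and then reduce an arbitrary element of $\ker\Theta$ to the proposed generators. First I would verify that $\Theta$ is well-defined and that its image lies in the prescribed subgroup of $\Gamma_0(\lambda_S)$. For any $\Phi^{\mc{P}} \in \Auteq D(S)$, the cohomological Fourier--Mukai transform $\Phi^{\mc{P},H}$ should preserve the two-dimensional fiber-supported subspace $H^0(F,\Z) \oplus H^2(F,\Z)$ (up to moving to another smooth fiber $F'$), giving an $\SL(2,\Z)$-matrix after absorbing $[2]$. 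The divisibility $d \in \lambda_S\Z$ records that $d$ is a fiber degree of an object in $D(S)$, and the compatibility $J_S(b) \cong S$ is forced by interpreting $b$ as the degree of the pure fiber sheaf $\Phi^{\mc{P}}(\mc{O}_{\mathrm{pt}})$ on its supporting fiber. Surjectivity then comes directly from \S\ref{subsec:bridgeland}: given a matrix satisfying both conditions, the universal sheaf $\mc{U}$ on $J_S(b) \times_C S$ yields an equivalence $\Phi^{\mc{U}}\colon D(J_S(b)) \to D(S)$, which after composing with an isomorphism $J_S(b) \cong S$ realizes the given matrix modulo elements of the kernel.

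For the easy containment in the kernel, each listed generator acts trivially on the even cohomology of a general smooth fiber $F$: a spherical object $\alpha \in D_Z(S)$ has support disjoint from $F$, so $T_\alpha$ is the identity on $D(F)$ cohomologically; tensoring with $\mc{O}_S(D)$ for $D \cdot F = 0$ restricts trivially to $F$; automorphisms of $S$ preserve the fiber numerics; and $[2]$ acts as $+\id$ on even degrees. The semidirect-product structure with $\Aut(S)$ follows from the standard conjugation action of automorphisms on tensor products and on spherical twists, so the claimed subgroup lies in $\ker\Theta$.

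The main obstacle, and the reason the statement is still a conjecture, is the reverse inclusion. Following the strategy of \cite{Ue16} used for type $\mathrm{I}_n$ fibers, I would aim to show that any $\Phi \in \ker\Theta$ can be composed with an element of the prescribed subgroup so as to (i) cover an automorphism of the base $C$, (ii) induce the identity on the generic fiber, and (iii) fix all numerical classes. The residual autoequivalence is then concentrated over the singular and multiple fibers, which is precisely where the $(-2)$-curves in $Z$ sit and where the spherical objects generating $\Br_Z(S)$ act. The heart of the problem is a local-to-global cancellation: one must exhibit an element of $\Br_Z(S)$ whose action on each irreducible component of each reducible or multiple fiber agrees with that of $\Phi$, simultaneously. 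This is known for type $\mathrm{I}_n$ by \cite{Ue16} and for certain $ADE$ configurations by \cite{IU05} and \cite{BP14}, corresponding essentially to Theorem \ref{thm:generation0} in the $N_S = 2$ case. For arbitrary Kodaira fiber types on a general minimal elliptic surface, however, the combinatorial complexity of the configuration of $(-2)$-curves and the interaction with multiple fiber multiplicities seem to require new input, perhaps via a description of $\Br_Z(S)$ as a generalized braid group acting on a suitable space of stability conditions on $D_Z(S)$ and a rigidity argument ruling out non-geometric autoequivalences.
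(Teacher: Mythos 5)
This statement is posed in the paper as a conjecture (Conjecture \ref{conj:A}); the paper contains no proof of it, only the remark that it is established in \cite{Ue16} under the additional hypothesis that every reducible fiber of $\pi$ is non-multiple and of type $\mathrm{I}_n$. So there is nothing in the paper to compare your argument against step by step, and your submission should be judged as a proof attempt of an open statement. Read that way, it is not a proof: it is a program. The parts you treat as routine (well-definedness of $\Theta$, the image landing in $\{\,M\in\Gamma_0(\lambda_S)\mid J_S(b)\cong S\,\}$, surjectivity via the universal sheaf on $J_S(b)\times_C S$, and the containment of the listed generators in $\ker\Theta$) are indeed the parts that are known, essentially from \cite{Br98} and the setup of \cite{Ue16}, and your sketches of them are plausible, though even there you would need to justify that $\Phi^{\mc{P},H}$ genuinely preserves the subspace $H^0(F,\Z)\oplus H^2(F,\Z)$ rather than merely mapping fiber classes of $S$ to fiber classes of a Fourier--Mukai partner, which requires knowing that the partner is again $S$ with a compatible fibration.

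The genuine gap is exactly the one you name yourself: the inclusion $\ker\Theta\subseteq\Span{\Br_Z(S),\otimes\mathcal{O}_S(D)}\rtimes\Aut(S)\times\Z[2]$. Your proposed reduction --- normalize $\Phi\in\ker\Theta$ so that it acts trivially on the generic fiber and on numerical classes, then cancel the residual autoequivalence concentrated on the reducible and multiple fibers by an element of $\Br_Z(S)$ --- is the strategy of \cite{Ue16}, but the cancellation step is precisely where that paper's hypothesis (type $\mathrm{I}_n$, non-multiple) is used, via an explicit analysis of spherical objects on cycles of $(-2)$-curves. For general Kodaira fiber types and multiple fibers no such analysis exists, and your suggestion of a stability-conditions or braid-group argument is a direction, not an argument. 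Since you candidly flag this, the honest conclusion is that your writeup correctly identifies the shape of the problem and the known ingredients, but does not prove the statement --- and neither does the paper, which is why it appears there as a conjecture.
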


Suppose that each reducible fiber of $\pi$ is non-multiple, and forms a cycle of $(-2)$-curves,
 i.e. it is of type $\mathrm{I}_n$ for some $n>1$. 
Then Conjecture \ref{conj:A} is shown to be true in \cite{Ue16}. See also \cite{Ue17}.


\section{Support of the kernel of Fourier--Mukai transforms}
In this section, we consider the support of  the kernel of Fourier--Mukai transforms.
Many results and ideas are due to Kawamata \cite{Ka02}, but for easy reference,
we often refer Huybrecht's book \cite{Hu06}.

Let $X$ and $Y$ be smooth projective varieties, and
suppose that 
$$\Phi=\Phi^{\mathcal{P}}_{X\to Y}\colon D(X)\to D(Y)$$ 
is a Fourier--Mukai transform. 
In this case, we have $\dim (X)=\dim (Y)$ (cf. \cite[Corollary 5.21]{Hu06}), and
the quasi-inverse of $\Phi$ is given by $\Phi^{\mc{Q}}$, where
$\mc{Q}=\mc{P}^\vee\otimes p_X^*\omega_X$.
It is known  (cf. \cite[Lemma 3.32]{Hu06}) that 
\begin{equation}\label{eqn:PQ}
\Supp (\mc{P})=\Supp(\mc{Q}).
\end{equation}
Let us denote by $\Gamma$ the support of $\mc{P}$. 
For $x\in X$, $\Gamma_x$ denotes the fiber over $x$ by $p_X|_{\Gamma}$.
Notice that
\begin{equation}\label{eqn:LiP}
\mathcal{P}|_{x\times X}\cong\Phi(\mathcal{O}_x)
\quad\text{and}\quad 
\Supp(\mc{P}|_{x\times X})=\Gamma_x 
\end{equation}
(see \cite[Lemma 3.29]{Hu06}),
which implies that 
$\Gamma_x=\Supp (\Phi(\mc{O}_x))$ as sets.
Furthermore, we note the following lemma.


\begin{lem}\label{lem:fundamental_FM}
\begin{enumerate}
\item
There exists an irreducible component of  $\Gamma$ which dominates $X$ by $p_X$,
and a similar statement holds for $p_Y$.
\item
$\Supp(\Phi (\mc{O}_x))$ is connected for any $x\in X$.
\item
If  $\dim (\Phi (\mc{O}_x))= \dim (X)$ holds, 
then $K_X\equiv 0$ and $K_Y\equiv 0$.
\item
Let $W$ be an irreducible component of $\Gamma$, and $\nu\colon \tilde{W}\to W$ be the normalization.
Then $\nu^*p_X^*\omega_X^{\otimes m}\cong \nu^*p_Y^*\omega_Y^{\otimes m}$ for some $m>0$. 
\end{enumerate}
\end{lem}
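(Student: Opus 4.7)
For (i), I would argue by contradiction: if no irreducible component of $\Gamma$ dominated $X$ via $p_X$, then $p_X(\Gamma)\subsetneq X$, and for any $x$ outside this image one has $\Gamma_x=\emptyset$, which by \eqref{eqn:LiP} forces $\Phi(\mc{O}_x)=0$, contradicting that $\Phi$ is an equivalence. The statement for $p_Y$ is symmetric, applying $\Phi^{-1}$ together with \eqref{eqn:PQ}. For (ii), the skyscraper $\mc{O}_x$ is indecomposable since $\End(\mc{O}_x)=\C$; a decomposition $\Supp(\Phi(\mc{O}_x))=A\sqcup B$ into nonempty disjoint closed subsets would yield a nontrivial splitting $\Phi(\mc{O}_x)\cong \alpha\oplus \beta$, because $\Hom$ and $\Ext$ groups between objects with disjoint support vanish, and applying $\Phi^{-1}$ would decompose $\mc{O}_x$ nontrivially, a contradiction.

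The main obstacle is (iv). My plan is to exploit the Serre functor: since $\Phi$ intertwines the Serre functors of $D(X)$ and $D(Y)$, and the kernel of an integral functor is unique up to isomorphism, we obtain an isomorphism
$$
\mc{P}\otimes p_X^*\omega_X\;\cong\;\mc{P}\otimes p_Y^*\omega_Y
$$
in $D(X\times Y)$ (using $\dim X=\dim Y$); iterating gives $\mc{P}\otimes p_X^*\omega_X^{\otimes m}\cong \mc{P}\otimes p_Y^*\omega_Y^{\otimes m}$ for every $m\ge 1$. Fix an irreducible component $W$ of $\Gamma$, let $\eta_W$ be its generic point, set $i=\max\{j\mid \mc{H}^j(\mc{P})_{\eta_W}\ne 0\}$, and $\mc{F}=\mc{H}^i(\mc{P})$. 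Since $p_X^*\omega_X^{\otimes m}$ and $p_Y^*\omega_Y^{\otimes m}$ are locally free, taking the $i$-th cohomology sheaf of the displayed isomorphism yields $\mc{F}\otimes p_X^*\omega_X^{\otimes m}\cong \mc{F}\otimes p_Y^*\omega_Y^{\otimes m}$. Restricting to the open subset of $X\times Y$ obtained by removing the other components of $\Gamma$ and pulling back along $\nu$ produces an isomorphism of coherent sheaves on $\tilde{W}$, where $\nu^*\mc{F}$ has positive generic rank $r$; taking determinants of the reflexive hulls gives $\nu^*p_X^*\omega_X^{\otimes rm}\cong \nu^*p_Y^*\omega_Y^{\otimes rm}$, which proves (iv) after replacing $m$ by $rm$.

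For (iii) I would combine the previous parts. Since $\mc{O}_x$ is a Calabi--Yau object (being $0$-dimensional) and autoequivalences preserve this property, $\Phi(\mc{O}_x)$ is Calabi--Yau; its support, connected by (ii), has dimension $\dim Y$ by assumption, hence equals $Y$. Choosing a cohomology sheaf $\mc{H}^j(\Phi(\mc{O}_x))$ of positive generic rank and applying \eqref{ali:CY-property} forces $c_1(\omega_Y)$ to be torsion, whence $K_Y\equiv 0$. To obtain $K_X\equiv 0$, I apply (iv) to an irreducible component $W$ of $\Gamma$ dominating $X$ (which exists by (i)): the right-hand side $\nu^*p_Y^*\omega_Y^{\otimes m}$ is numerically trivial on $\tilde{W}$, so $\nu^*p_X^*\omega_X^{\otimes m}$ is too. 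Since $p_X\circ\nu\colon \tilde{W}\to X$ is surjective and surjective maps of projective varieties induce injections on $N^1$, it follows that $\omega_X^{\otimes m}$ is numerically trivial on $X$, i.e.\ $K_X\equiv 0$.
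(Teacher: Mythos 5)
Your proof is correct in substance, but it is genuinely more self-contained than the paper's, whose proof of this lemma consists almost entirely of citations: (i) and (ii) are \cite[Lemmas 6.4 and 6.11]{Hu06}, (iv) is \cite[Lemma 6.9]{Hu06}, and for (iii) the paper only argues $K_Y\equiv 0$ via the Calabi--Yau property of $\Phi(\mc{O}_x)$ and then invokes Remark \ref{rem:nef-nef} (ii) (Kawamata's theorem that nefness of $\pm K$ and the numerical Kodaira dimension are derived invariants) to transfer numerical triviality back to $X$. Your arguments for (i) and (ii) reconstruct exactly the cited proofs. For (iii), your derivation of $K_X\equiv 0$ from part (iv) and the surjectivity of $p_X\circ\nu$ on a dominating component is more elementary than the paper's appeal to Kawamata's theorem, and is a valid alternative. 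In (iv), the identity $\mc{P}\otimes p_X^*\omega_X\cong \mc{P}\otimes p_Y^*\omega_Y$ from Serre functors plus uniqueness of kernels, followed by the determinant argument on the normalization, is exactly the right strategy; the one slip is that after restricting to the complement $U$ of the other components of $\Gamma$ you only obtain the isomorphism of line bundles on $\nu^{-1}(W\cap U)$, whose complement in $\tilde{W}$ may have codimension one (other components can meet $W$ in a divisor), which does not determine the line bundle on all of $\tilde{W}$. This is easily repaired: do not restrict to $U$, but instead replace $\mc{F}$ by $\mc{F}\otimes\mc{O}_W$, which is still nonzero at the generic point of $W$ by Nakayama and still satisfies the twisted isomorphism globally; then pull back to $\tilde{W}$ and take determinants over the smooth locus, whose complement has codimension two, so the resulting triviality of $\nu^*p_X^*\omega_X^{\otimes rm}\otimes\nu^*p_Y^*\omega_Y^{\otimes -rm}$ extends to all of $\tilde{W}$ by normality.
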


\begin{proof}
(i) See \cite[Lemma 6.4]{Hu06} for the first statement.  Apply the first statement for the quasi-inverse 
$\Phi^{\mc{Q}}_{Y\to X}$ and use \eqref{eqn:PQ}. Then, we can show the second.

(ii) See \cite[Lemma 6.11]{Hu06}.

(iii) If the equality holds, $\Supp (\Phi(\mc{O}_x))=Y$. Since $\Phi(\mc{O}_x)$ is a Calabi--Yau object, $c_1(Y)$ is torsion as in \S \ref{subsec:twist_functor}. 
Then the result follows by Remark \ref{rem:nef-nef} (ii).

(iv) See \cite[Lemma 6.9]{Hu06}.
\end{proof}

Let us define
$$
\Comp (\Phi^\mc{P}_{X\to Y})
$$
the set of irreducible components $W_0$ of $\Gamma=\Supp(\mc{P})$ which dominates $X$ by $p_X$.
Note that  $\Comp (\Phi^\mc{P}_{X\to Y})\ne \emptyset$ by Lemma \ref{lem:fundamental_FM} (i).


\begin{lem}\label{lem:i-dimensional}
Take an  irreducible component $W$ of  $\Gamma$.
\begin{enumerate}
\item  We see $\dim (W)\le \dim (p_X(W))+\dim (p_Y(W))$. If furthermore $\dim (W)= \dim (p_X(W))+\dim (Y)$ holds,
then $K_X\equiv 0$.
\item 
If $\dim (W)=\dim (X)$ and $W\in \Comp (\Phi^\mc{P}_{X\to Y})$, 
then $W$ is the unique irreducible component dominating $X$ by $p_X$. Furthermore, it also dominates $Y$ by $p_Y$.
\item
If $\dim (W)=2\dim (X)$, then $W=X\times Y$.
\end{enumerate}
\end{lem}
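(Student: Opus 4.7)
My plan is to handle (iii) and (i) as quick dimension counts, and then focus on (ii), where the real content lies.

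Part (iii) is immediate: $W$ is an irreducible closed subvariety of the irreducible variety $X\times Y$ of the full dimension $2\dim(X)$, so $W=X\times Y$. For (i), I would use the inclusion $W\subset \overline{p_X(W)}\times \overline{p_Y(W)}$ to get $\dim(W)\le \dim(p_X(W))+\dim(p_Y(W))$. In the equality case $\dim(W)=\dim(p_X(W))+\dim(Y)$, the same inclusion forces $p_Y(W)=Y$ and $W=\overline{p_X(W)}\times Y$, as $W$ is irreducible of top dimension inside $\overline{p_X(W)}\times Y$. Then for any $x\in p_X(W)$, the set-theoretic fiber $W_x=\{x\}\times Y$, so by \eqref{eqn:LiP} the support $\Supp \Phi(\mc{O}_x)\supset Y$, whence $\dim \Phi(\mc{O}_x)=\dim(Y)=\dim(X)$, and Lemma \ref{lem:fundamental_FM}(iii) now gives $K_X\equiv 0$.

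For (ii), uniqueness will follow from connectedness. Suppose another component $W'\ne W$ of $\Gamma$ also dominates $X$ under $p_X$. Then $W\cap W'\subsetneq W$ has dimension strictly less than $\dim(W)=\dim(X)$, so $p_X(W\cap W')\subsetneq X$, and for general $x$ the fibers $W_x$ and $W'_x$ are nonempty by dominance and disjoint; fibers of non-dominant components over a general $x$ are empty. The resulting disjoint decomposition of $\Gamma_x$ contradicts its connectedness in Lemma \ref{lem:fundamental_FM}(ii), forcing $W$ to be unique. Then for general $x$, $\Gamma_x=W_x$ is $0$-dimensional (since $p_X|_W$ is generically finite) and connected, hence a single point; thus $p_X|_W\colon W\to X$ is generically injective, i.e. birational, and induces a rational map $\phi\colon X\dashrightarrow Y$.

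For the dominance of $W$ over $Y$, I would apply the $p_Y$-half of Lemma \ref{lem:fundamental_FM}(i) to obtain a component $W^*$ of $\Gamma$ dominating $Y$, and the task is to show $W^*=W$. My plan is to argue that, for general $x\in X$, the object $\Phi(\mc{O}_x)$ is a simple Calabi--Yau object of $D(Y)$ with $0$-dimensional support at $\phi(x)\in Y$ and vanishing negative self-$\Ext$'s, so via a local analysis at $\phi(x)$ (where $\omega_Y$ is locally trivial, in the spirit of Bondal--Orlov's identification of point-like objects) it is isomorphic to $k(\phi(x))[s]$ for some shift $s$ that is constant on a dense open by connectedness of $X$ and semicontinuity of cohomology in the family $\mc{P}$. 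The bijectivity of $\Phi$ on isomorphism classes then forces $\phi$ to be injective on this open, hence birational onto its image $p_Y(W)\subset Y$, which therefore has dimension $\dim(X)=\dim(Y)$ and, by irreducibility of $Y$, equals $Y$. The main obstacle I anticipate is precisely this step: the classical point-like-object theorem of Bondal--Orlov assumes $\omega_X^{\pm 1}$ ample, so one must carefully carry out a purely local analysis at $\phi(x)\in Y$ and verify constancy of the shift in the family.
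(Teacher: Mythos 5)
Your argument follows essentially the same route as the paper's for all three parts: (iii) by a dimension count, (i) via the inclusion $W\subset \overline{p_X(W)}\times\overline{p_Y(W)}$ followed by Lemma \ref{lem:fundamental_FM} (iii), and the uniqueness in (ii) via connectedness of $\Gamma_x$ combined with the fact that $W\cap W'$ cannot dominate $X$. The one step you leave open --- identifying $\Phi(\mc{O}_x)$ with a shifted skyscraper for general $x$ --- is not the obstacle you fear: the paper simply cites \cite[Lemma 4.5]{Hu06}, which states that any object $E$ with $0$-dimensional support, $\Hom(E,E)\cong\C$ and $\Hom^{i}(E,E)=0$ for $i<0$ is isomorphic to $\mc{O}_y[n]$; no ampleness of $\omega_Y$ is needed because the $0$-dimensionality of the support is already established (it is the intrinsic Bondal--Orlov characterization of point objects, not this lemma, that requires $\pm\omega$ ample). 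Likewise, constancy of the shift in the family is irrelevant to your injectivity argument: if $\phi(x)=\phi(x')$ for $x\ne x'$, then $\Hom^*(\mc{O}_y[s],\mc{O}_y[s'])\ne 0$ for \emph{any} shifts $s,s'$, already contradicting the orthogonality of $\mc{O}_x$ and $\mc{O}_{x'}$ under the fully faithful $\Phi$. With that lemma in hand, your detour through the auxiliary component $W^*$ is also unnecessary: injectivity of $x\mapsto\phi(x)$ on a dense open set makes $p_Y|_W$ generically finite onto its image, so $\dim(p_Y(W))=\dim(W)=\dim(Y)$ and $W$ dominates $Y$ directly, which is exactly how the paper concludes.
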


\begin{proof}
(i) The first result follows from the fact $W\subset  p_X(W)\times p_Y(W)$. 
For the second, denote by $W_x$ the fiber of $p_X|_W\colon W\to p_X(W)$ over a point $x\in p_X(W)$.
Then, $\dim (Y)\le \dim (W_x)\le \dim (\Phi (\mc{O}_x))$, and hence $\Supp (\Phi (\mc{O}_x))=Y$. Then Lemma \ref{lem:fundamental_FM} (iii) 
completes the proof.

(ii) Note that if $\dim  (\Phi (\mc{O}_x))=0$ for $x\in X$,  there is a point $y\in Y$ and an integer $n$ 
such that $\Phi (\mc{O}_x)\cong \mc{O}_{y}[n]$ 
by \cite[Lemma 4.5]{Hu06}.    We also notice that there are no other components dominating $X$, since $\Supp(\Phi (\mc{O}_x))$ is connected by Lemma \ref{lem:fundamental_FM} (ii).
Hence, for general points $x\ne x'\in X$, we have $\dim  (\Phi (\mc{O}_x))=\dim  (\Phi (\mc{O}_{x'}))=0$, and 
$\Hom^i  (\Phi (\mc{O}_x), \Phi (\mc{O}_{x'}))=0$ for all $i$. 
Then, $\Supp (\Phi (\mc{O}_x))\cap \Supp (\Phi (\mc{O}_{x'}))=\emptyset$. In particular, $W$ also dominates $Y$ by $p_Y$. 

(iii) This is obvious, since $\dim (X)= \dim (Y)$ and $W\subset X\times Y$ by definition.
\end{proof}

The equation \eqref{eqn:PQ} implies that  
Lemma \ref{lem:i-dimensional} (i) and (ii) still hold after replacing $p_X$ with $p_Y$.

For an irreducible closed subvariety $V$ of $X$, we set 
$$
\mc{C}_V:=\{C \mid C \text{ is an irreducible curve contained in $V$, satisfying } K_X\cdot C=0 \}.
$$


\begin{lem}\label{lem:kawamata}
Let $V$ be an irreducible closed subvariety of $X$, and take $W_0\in \Comp (\Phi^\mc{P}_{X\to Y})$.
\begin{enumerate}
\item
Suppose that  
\begin{equation}\label{eqn:CV}
\bigcup_{C\in \mc{C}_V}C\subsetneq V
\end{equation}
holds.
Then, we have 
\begin{equation}\label{eqn:WZX}
\dim (W_0)\le 2\dim (X)-\dim (V).
\end{equation}
\item
Suppose that  $K_X|_V$  is big. Then, the inequality \eqref{eqn:WZX}
holds. Assume furthermore that  $K_X|_V$ is nef and the equality in \eqref{eqn:WZX} holds. Then, $K_X$  is nef. 
\item
Suppose that  $-K_X|_V$ is big. Then, the inequality \eqref{eqn:WZX}
holds. Assume furthermore that  $-K_X|_V$ is nef and  the equality in \eqref{eqn:WZX} holds. Then, $-K_X$ is nef. 
\end{enumerate}
\end{lem}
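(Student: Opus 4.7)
The plan is to combine Kawamata's fiber-dimension technique with the key identity from Lemma~\ref{lem:fundamental_FM}(iv). Let $\nu_0\colon \tilde W_0\to W_0$ denote the normalization, and set $f:=p_X\circ \nu_0\colon \tilde W_0\to X$ and $g:=p_Y\circ \nu_0\colon \tilde W_0\to Y$. Since $W_0\in\Comp(\Phi^{\mc{P}})$, the map $f$ is surjective, and Lemma~\ref{lem:fundamental_FM}(iv) gives $f^*\omega_X^{\otimes m}\cong g^*\omega_Y^{\otimes m}$ for some $m>0$. Consequently, for every irreducible curve $\tilde C\subset \tilde W_0$ we have the \emph{fundamental equality}
\[
f^*K_X\cdot \tilde C \;=\; g^*K_Y\cdot \tilde C.
\]
Write $r:=\dim W_0-\dim X$, the relative dimension of $f$.

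For (i), suppose for contradiction that $r>\dim X-\dim V$. By the fiber dimension theorem $\dim f^{-1}(V)\ge \dim V+r$, and one can choose an irreducible component $V'\subset f^{-1}(V)$ with $f(V')=V$ and $\dim V'\ge \dim V+r>\dim Y$. Hence $g|_{V'}\colon V'\to Y$ has positive-dimensional generic fibers. For a general fiber $F_y$ of $g|_{V'}$, the map $(f,g)\colon \tilde W_0\to X\times Y$ is generically finite (being a normalization composed with an inclusion), and since $g$ collapses $F_y$, the projection $f$ cannot collapse $F_y$; so $f(F_y)\subset V$ has positive dimension. For any irreducible curve $C\subset f(F_y)$, lift to $\tilde C\subset F_y$ with $f(\tilde C)=C$; the fundamental equality together with $g^*K_Y\cdot \tilde C=0$ forces $K_X\cdot C=0$. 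Sweeping $y$ over $g(V')$, the images $f(F_y)$ cover $f(V')=V$, so $V$ is covered by curves in $\mc{C}_V$, contradicting the hypothesis.

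For the bigness parts of (ii) and (iii), write $\pm K_X|_V\equiv A+E$ numerically with $A$ ample and $E$ effective on $V$. A general member $C$ of any covering family of curves avoids $\Supp E$, so $A\cdot C>0$ and $E\cdot C\ge 0$ yield $\pm K_X\cdot C>0$ and hence $C\notin \mc{C}_V$. Thus $V$ is not covered by curves of $\mc{C}_V$, and (i) delivers the inequality. For the equality case of (ii), $r=\dim X-\dim V$ and $\dim V'=\dim X=\dim Y$. If $g|_{V'}$ were not dominant, the argument of (i) would again produce a covering of $V$ by $K_X$-trivial curves, contradicting bigness; hence $g|_{V'}$ is generically finite and surjective. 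Restricting the fundamental equality to $V'$ gives $(f|_{V'})^*(\omega_X|_V)^{\otimes m}\cong (g|_{V'})^*\omega_Y^{\otimes m}$; for any curve $D\subset Y$, lifting through the surjection $g|_{V'}$ to $\tilde D\subset V'$ yields $K_Y\cdot D$ as a positive multiple of $K_X|_V\cdot f(\tilde D)\ge 0$, showing $K_Y$ is nef. Lifting curves on $X$ through the surjection $f\colon \tilde W_0\to X$ and applying the fundamental equality on all of $\tilde W_0$ then transfers nefness back to $K_X$. The proof of the equality case in (iii) is identical with the sign of $K_X$ reversed throughout.

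The main subtlety is the equality case of (ii) and (iii): one must first exploit bigness and the fiber-dimension argument of (i) to force $g|_{V'}$ to be generically finite and surjective, after which the fundamental equality effects a two-step transfer of nefness, first from $V\subset X$ to $Y$ via the restricted identity on $V'$, and then from $Y$ back to $X$ via the identity on $\tilde W_0$.
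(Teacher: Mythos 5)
Your argument is correct and follows essentially the same route as the paper: the fiber-dimension estimate for $p_X^{-1}(V)\cap W_0$, the use of Lemma \ref{lem:fundamental_FM}(iv) to show that curves contracted by $p_Y$ map to $K_X$-trivial curves in $V$, and Kodaira's lemma to reduce the big case to condition \eqref{eqn:CV}. Two small remarks: in the final step you transfer nefness from $K_Y$ back to $K_X$ by lifting curves through $f\colon\tilde W_0\to X$, which is a clean self-contained substitute for the paper's citation of Remark \ref{rem:nef-nef}(ii); and in part (i) you should sweep over \emph{every} fiber of $g|_{V'}$ rather than only general ones (all fibers are positive-dimensional by lower semicontinuity), since $\bigcup_{C\in\mc{C}_V}C$ need not be closed and covering only a dense subset of $V$ would not contradict \eqref{eqn:CV}.
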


\begin{proof}
(i) Set $W_{0V}:=p_X^{-1}(V)\cap W_0(\subset X\times Y)$. Then,  we have  
\begin{equation}\label{eqn:fiber_dimension}
 \dim (W_0)-\dim (X)\le \dim (W_{0V})-\dim (V).
\end{equation}
If the projection $p_Y$ contracts a curve $C'$ on $W_{0V}$,
then $p_X(C')$ is a curve on $V$. 
Denote the normalization $\widetilde{W}_{0}\to W_{0}$ by $\nu_0$, and 
take an irreducible curve $\widetilde{C'}$ on $\widetilde{W}_{0}$ 
with $\nu_0(\widetilde{C'})=C'$. Then 
$$
K_X\cdot p_X(C')=(\nu_0^*p_X^*K_X)\cdot \widetilde{C'}
=(\nu_0^*p_Y^*K_Y)\cdot \widetilde{C'}=0 
$$  
holds by Lemma \ref{lem:fundamental_FM} (iv). 
Hence,  condition \eqref{eqn:CV} implies that $p_Y|_{W_{0V}}$
is generically finite on the image, and hence $\dim (W_{0V})\le \dim (Y)$.
The result follows from the equality $\dim (X)=\dim (Y)$ and inequality \eqref{eqn:fiber_dimension}.

(ii) Take the normalization $\mu\colon \widetilde{V}\to V$. Since $\mu^*(K_X|_V)$ is also big,
Kodaira's lemma yields that $\mu^*(K_X|_V)$ is $\Q$-linearly equivalent to $A+B$, 
where $A$ is an ample $\Q$-divisor and $B$ is an effective $\Q$-divisor on $\widetilde{V}$.
Then, we have 
$$
\bigcup_{C\in \mc{C}_V}C\subset \mu (\Supp (B))\subsetneq V,
$$
 and hence,  $p_Y|_{W_{0V}}$ is generically finite on the image as in the proof of (i) and 
inequality  \eqref{eqn:WZX} holds by (i).
If equality in \eqref{eqn:WZX} holds, then inequality \eqref{eqn:fiber_dimension} implies 
$\dim (Y)\le \dim (W_{0V})$, and thus $\dim (Y)= \dim (W_{0V})$. Hence, it turns out that $p_Y|_{W_{0V}}$ is surjective. 
Since the linear equivalence
$$
((p_X\circ \nu_0)|_{\nu_0^{-1}(W_{0V})}) ^*(mK_X|_V)\sim ((p_Y\circ \nu_0)|_{\nu_0^{-1}(W_{0V})})^*(mK_Y)
$$ 
holds for some $m>0$ by Lemma \ref{lem:fundamental_FM} (iv),
$K_Y$ is nef by the assumption that $K_X|_V$ is nef . Hence, $K_X$ is nef  (see Remark \ref{rem:nef-nef}(ii)). 
The statement (iii) can be proved in a similar way.
\end{proof}

\begin{rem}\label{rem:nef-nef}
\begin{enumerate}
\item
If $K_X$ is big, i.e.~$X$ is of general type, then Lemma \ref{lem:kawamata} (ii) for $V=X$ yields $\dim (W_0)=\dim (X)$.    
In a similar way, if $-K_X$ is big, then  $\dim (W_0)=\dim (X)$ holds by Lemma \ref{lem:kawamata} (iii).    
These are actually shown by Kawamata in the proof of \cite[Theorem 2.3 (2)]{Ka02}.
\item
If $K_X$  is nef and $Y$ is a Fourier--Mukai partner of $X$, then 
$K_Y$ is nef and $\nu(X)=\nu (Y)$ holds. A similar statement is true for anticanonical divisors $-K_X$ and $-K_Y$. See \cite[Theorem 2.3]{Ka02} and \cite[Propositions 6.15, 6.18]{Hu06} for the proof.
\item
Let $\{\varphi _i\}$ be the set of all extremal contractions on $X$. Define $V$ to be a fiber of  
maximal dimension 
among all fibers of all $\varphi_i$. Then $-K_X|_V$ is ample, and hence, Lemma \ref{lem:kawamata} (iii) implies that
inequality \eqref{eqn:WZX} holds.
\end{enumerate}
\end{rem}


\begin{lem}\label{lem:numerical_iitaka_kodaira}
Let $D$ be a nef Cartier divisor and $H$ be a very ample divisor on a normal projective variety $X$.
Set $V=\bigcap _{i=1}^{\dim (X)- \nu (D)}H_i$ for general members $H_i\in |H|$. 
Then 
$$
\overline{\underset{C\subset V}{\underset{D\cdot C=0}{\bigcup}}C}\subsetneq V.
$$
\end{lem}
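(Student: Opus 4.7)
The plan is to exploit the fact that the restriction $D|_V$ is nef and big, and then to invoke Kodaira's lemma to force every curve with zero intersection into the support of an effective divisor. Write $n=\dim X$ and $k=\nu(D)$, so that $\dim V=k$ by construction. The key first step is the computation
$$
(D|_V)^{k}=D^{k}\cdot H^{n-k},
$$
which follows from the projection formula applied to the complete intersection $V=H_1\cap\cdots\cap H_{n-k}$. The right-hand side is strictly positive: for a nef divisor $D$, every intersection number $D^{i}\cdot H^{n-i}$ is $\ge 0$, and by definition $\nu(D)$ is the largest $i$ for which this number is nonzero. Since the restriction of a nef divisor is nef, $D|_V$ is nef with positive top self-intersection, hence nef and big.

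If $k=0$ then $V$ is a finite set of points and the claim is vacuous, so assume $k\ge 1$; by Bertini (in characteristic $0$, for a very ample linear system on a normal projective variety) $V$ is irreducible. Passing to the normalization $\mu\colon \widetilde V\to V$, the pullback $\mu^{*}(D|_V)$ remains nef and big on the normal projective variety $\widetilde V$, so Kodaira's lemma produces a $\Q$-linear equivalence
$$
\mu^{*}(D|_V)\sim_{\Q} A+E
$$
with $A$ an ample $\Q$-divisor and $E$ an effective $\Q$-divisor on $\widetilde V$.

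Now for any irreducible curve $C\subset V$ with $D\cdot C=0$, I pick an irreducible curve $\widetilde C\subset\mu^{-1}(C)$ dominating $C$. Since $\mu$ is finite, the projection formula gives
$$
\mu^{*}(D|_V)\cdot \widetilde C=\deg(\mu|_{\widetilde C})\,(D\cdot C)=0,
$$
so $(A+E)\cdot\widetilde C=0$, and ampleness of $A$ forces $E\cdot\widetilde C<0$. Hence $\widetilde C\subset\Supp E$ and therefore $C\subset \mu(\Supp E)$. Because $\mu$ is finite and $\Supp E$ has dimension strictly smaller than $\dim\widetilde V=k$, the image $\mu(\Supp E)$ is a proper closed subset of the irreducible variety $V$, which yields the desired strict inclusion after taking the closure.

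The step that really uses the hypotheses is the first one: producing the bigness of $D|_V$ from the numerical Iitaka dimension via the complete intersection structure of $V$, and then upgrading ``nef with positive top self-intersection'' to ``nef and big'' on the (possibly non-smooth) variety $\widetilde V$ so that Kodaira's lemma applies. Everything else is a routine intersection-theoretic consequence.
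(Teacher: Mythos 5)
Your proof is correct and follows essentially the same route as the paper's: restrict $D$ to the general complete intersection $V$, observe that $D|_V$ is nef and big, and apply Kodaira's lemma so that every $D$-trivial curve is forced into the support of the effective part, which is a proper closed subset. The extra details you supply --- the explicit computation $(D|_V)^{\nu(D)}=D^{\nu(D)}\cdot H^{\dim(X)-\nu(D)}>0$, the treatment of the case $\nu(D)=0$, and the passage to the normalization before invoking Kodaira's lemma --- are sound refinements of the same argument.
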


\begin{proof}
It follows from the definition on the numerical Iitaka-Kodaira  dimension that $D|_V$ is a nef and big divisor on $V$.
Then by  Kodaira's lemma, $D|_V$ is $\Q$-linearly equivalent to $A+B$, where $A$ is an ample $\Q$-divisor and $B$ is an effective $\Q$-divisor on $V$.
Hence, 
$\overline{\bigcup_{D\cdot C=0, C\subset V}C}\subset B$, and then  the result follows.   
\end{proof}


\begin{prop}\label{prop:kappa_codim}
Fix $W_0\in \Comp (\Phi^\mc{P}_{X\to Y})$.
\begin{enumerate}
\item 
Assume that $K_X$ is nef. 
Then, we have 
$$
\dim (W_0)\le 2\dim (X)-\nu(X).
$$
\item
Assume that $-K_X$ is nef. 
Then, we have 
$$
\dim (W_0)\le  2\dim (X)-\nu(-K_X). 
$$
\item
Assume that $\kappa (X)\ge 0$. Suppose that the minimal model conjecture and
the abundance conjecture hold. 
Then, we have 
\begin{equation}\label{eqn:kappa}
\dim (W_0)\le 2\dim (X)-\kappa(X).
\end{equation}
\end{enumerate}
\end{prop}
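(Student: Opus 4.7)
All three parts fit the same template: produce an irreducible closed subvariety $V \subset X$ whose dimension equals the invariant on the right-hand side (namely $\nu(X)$, $\nu(-K_X)$, or $\kappa(X)$), to which Lemma \ref{lem:kawamata} applies --- either via (i) after verifying $\bigcup_{C \in \mc{C}_V} C \subsetneq V$, or via (ii) (resp. (iii)) after verifying that $K_X|_V$ (resp. $-K_X|_V$) is big.

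For (i), the plan is to take $D := K_X$ (nef by assumption) and let $V = H_1 \cap \cdots \cap H_{\dim X - \nu(X)}$ for general members of a very ample linear system. Lemma \ref{lem:numerical_iitaka_kodaira} then gives $\overline{\bigcup_{C \in \mc{C}_V} C} \subsetneq V$, hence $\bigcup_{C \in \mc{C}_V} C \subsetneq V$, and Lemma \ref{lem:kawamata}(i) yields $\dim W_0 \le 2\dim X - \nu(X)$. For (ii), the set $\mc{C}_V$ is defined by the condition $K_X \cdot C = 0$, which is equivalent to $-K_X \cdot C = 0$; the same argument applied with $D := -K_X$ therefore delivers the analogous bound.

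For (iii), the plan is to invoke MMP and abundance to manufacture a $V$ with $\dim V = \kappa(X)$ on which $K_X|_V$ is big, after which Lemma \ref{lem:kawamata}(ii) finishes the argument. MMP provides a minimal model $X_{\min}$ birational to $X$ with $K_{X_{\min}}$ nef and $\kappa(X_{\min}) = \kappa(X)$; abundance then makes $K_{X_{\min}}$ semi-ample, so $|m K_{X_{\min}}|$ is base-point free for some $m > 0$ and defines a morphism $\phi_m : X_{\min} \to Z$ with $\dim Z = \kappa(X)$. Fix a common resolution $f : \tilde X \to X$, $g : \tilde X \to X_{\min}$, take $\tilde V \subset \tilde X$ to be a general complete intersection of $\dim X - \kappa(X)$ very ample hyperplane sections (so $\dim \tilde V = \kappa(X)$ and $(\phi_m \circ g)|_{\tilde V} : \tilde V \to Z$ is generically finite and surjective by a dimension count / Bertini), and set $V := f(\tilde V)$. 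For generic $\tilde V$ avoiding the $f$-exceptional locus, $V$ is irreducible of dimension $\kappa(X)$ and $f|_{\tilde V} : \tilde V \to V$ is birational.

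The main obstacle is verifying the bigness of $K_X|_V$. My plan is to compare $f^*(mK_X)$ with $g^*(mK_{X_{\min}})$ on $\tilde X$: both coincide with $m K_{\tilde X}$ modulo effective exceptional divisors, with signs controlled by the negativity lemma applied to the terminal singularities of $X_{\min}$. Since $mK_{X_{\min}} \sim \phi_m^* A$ for some very ample divisor $A$ on $Z$, the pull-back $g^*(mK_{X_{\min}}) \sim (\phi_m \circ g)^* A$ restricts to a big divisor on $\tilde V$ (as $(\phi_m \circ g)|_{\tilde V}$ is generically finite and surjective onto $Z$). The exceptional comparison then forces $f^*(mK_X)|_{\tilde V}$ to be big on $\tilde V$, and birational invariance of bigness under $f|_{\tilde V}$ transfers this to $K_X|_V$, as required.
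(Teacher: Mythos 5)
Your parts (i) and (ii) are exactly the paper's argument: cut by general very ample divisors, invoke Lemma \ref{lem:numerical_iitaka_kodaira} to verify condition \eqref{eqn:CV} (and, as you note, $\mc{C}_V$ only sees $K_X\cdot C=0$, so the same $V$ works for $\pm K_X$), then apply Lemma \ref{lem:kawamata}(i). For part (iii) you take a genuinely different route. The paper also runs the MMP and uses the effectivity of $n(f^*K_X-g^*K_{X_m})$ (cited as \cite[Lemma 4.4]{Ka02}, which is what your negativity-lemma remark amounts to: $g^*K_{X_m}-f^*K_X$ is $f$-nef and its $f$-pushforward is effective by terminality of $X_m$), but it feeds this into Lemma \ref{lem:kawamata}(i): it takes $V$ to be the strict transform of a general complete intersection $V_m\subset X_m$ and shows by contradiction that the curves $C\subset V$ with $K_X\cdot C=0$ cannot cover $V$, because they would push forward to $K_{X_m}$-trivial curves covering $V_m$, contradicting Lemma \ref{lem:numerical_iitaka_kodaira} together with $\nu(X_m)=\kappa(X_m)=\kappa(X)$. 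You instead aim at Lemma \ref{lem:kawamata}(ii), manufacturing a $V$ with $K_X|_V$ big; this is valid, but note it needs the stronger input that $K_{X_m}$ is \emph{semi-ample}, which follows from the paper's formulation of abundance ($\kappa=\nu$) only via Kawamata's theorem on good minimal models --- the paper's route gets by with Kodaira's lemma alone. Two small points to tidy: reduce first to $\kappa(X)>0$ (for $\kappa(X)=0$ the bound is vacuous), and a general complete intersection $\tilde V$ of positive dimension will meet the $f$-exceptional divisors, so you should say ``not contained in'' rather than ``avoiding''; $f|_{\tilde V}$ is still birational onto its image, which is all you use. With those adjustments your argument goes through.
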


\begin{proof}
(i) and (ii) are direct consequences of  Lemmas \ref{lem:kawamata} (i) and \ref{lem:numerical_iitaka_kodaira}.

(iii) We may assume $\kappa (X)> 0$, since otherwise the statement is obvious.
Run the minimal model program for $X$. Then, we obtain a birational map $\phi\colon X\dashrightarrow  X_{m}$, where  
$X_{m}$ is a minimal model. 
Take a common resolution $f\colon X'\to X$ and $g\colon X'\to  X_{m}$.
Then \cite[Lemma 4.4]{Ka02} states that there is an integer $n>0$
such that $D:=n(f^*K_X-g^*K_{X_m})$ is effective. 

Let $H$ be a very ample divisor on $X_m$.
Set $V_m:=\bigcap _{i=1}^{\dim (X)- \kappa (X)}H_i$ for general members $H_i\in |H|$. Then, take its strict transform on $X$ and denote it by $V$.
We see that  $V$  satisfies the condition \eqref{eqn:CV}. Indeed, assume
\begin{equation}\label{eqn:CVV}
\bigcup_{C\in \mc{C}_V}C=V
\end{equation}
 for a contradiction.
Let us set
$$
\widetilde{\mc{C}_{V}}:=\{  C\in \mc{C}_V \mid C\cap U\ne \emptyset \},
$$
where 
$U$ is the open subset of $X$ on which $\phi$ is an isomorphism.
Then we know 
\begin{equation}\label{eqn:VmU}
\emptyset \ne V_m\cap \phi(U)\subset\bigcup_{C\in \widetilde{\mc{C}_{V}}}\phi_*C
\end{equation}
by \eqref{eqn:CVV} and the choice of $V_m$. 
Take irreducible curves  $C\in \widetilde{\mc{C}_{V}}$ and $C'$ on $X'$ satisfying $f(C')=C$.
Then, we have
$$
0=nK_X\cdot C=nf^*K_X\cdot C'= ng^*K_{X_m}\cdot C'+D\cdot C'.
$$ 
On the other hand, we have $g^*K_{X_m}\cdot C'\ge 0$ and $D\cdot C'\ge 0$, since $K_{X_m}$ is nef, and $f(\Supp (D))\cap U=\emptyset$.
Therefore, we have 
$g^*K_{X_m}\cdot C'=D\cdot C'=0$, and thus
$$
K_{X_m}\cdot \phi_*C=K_{X_m}\cdot g(C')=0.
$$ 
In particular, we know that
$$
\{ \phi _*C \mid C\in \widetilde{\mc{C}_{V}}\} \subset \mc{C}_{V_m},
$$
and hence
$$
V_m  \subset \overline{\bigcup_{C\in \widetilde{\mc{C}_{V}}}\phi_*C}\subset \overline{\bigcup_{C_m\in \mc{C}_{V_m}}C_m}.
$$
Here, the first inclusion follows from \eqref{eqn:VmU}.
This produces a contradiction to 
$$
\overline{\bigcup_{C_m\in \mc{C}_{V_m}}C_m}\subsetneq V_m 
$$
obtained by Lemma \ref{lem:numerical_iitaka_kodaira} and $\nu (X_m)=\kappa (X_m)=\kappa (X)$.
Therefore, the result follows from Lemma \ref{lem:kawamata} (i).
\end{proof}


\begin{cor}\label{cor:maximum}
Take $W_0\in \Comp (\Phi^\mc{P}_{X\to Y})$.
\begin{enumerate}
\item
If $\dim (W_0)=2\dim (X)$, then we have $K_X\equiv 0$.
\item
If $\dim (W_0)=2\dim (X)-1$ and $K_X\not\equiv 0$, then either $K_X$ is nef and  $\nu(X)=1$
 or else  $-K_X$ is nef and $\nu(-K_X)=1$. 
\end{enumerate}
\end{cor}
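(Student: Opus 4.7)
Part (i) follows from a direct dimension count. Since any Fourier--Mukai partnership satisfies $\dim(X)=\dim(Y)$, the ambient $X\times Y$ has dimension $2\dim(X)$, so the irreducible closed subvariety $W_0$ of top dimension must coincide with $X\times Y$. In particular $p_X(W_0)=X$, so $\dim(W_0)=\dim(p_X(W_0))+\dim(Y)$, and Lemma~\ref{lem:i-dimensional}(i) yields $K_X\equiv 0$.

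For (ii), the plan is a case analysis on the sign of $K_X$. A first reduction rules out bigness: for $\dim(X)\ge 2$, Remark~\ref{rem:nef-nef}(i) forces $\dim(W_0)=\dim(X)$ whenever either $K_X$ or $-K_X$ is big, which is incompatible with $\dim(W_0)=2\dim(X)-1$. (In dimension one, $K_X\not\equiv 0$ means $X$ is either $\mathbb{P}^1$ or a curve of general type, and the statement is immediate.) Next, if $K_X$ is nef, Proposition~\ref{prop:kappa_codim}(i) gives $\nu(X)\le 1$, while the standing hypothesis $K_X\not\equiv 0$ (together with the standard fact that a nef divisor has $\nu=0$ if and only if it is numerically trivial) gives $\nu(X)\ge 1$, so $\nu(X)=1$. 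The case when $-K_X$ is nef is symmetric, via Proposition~\ref{prop:kappa_codim}(ii).

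The main obstacle is ruling out the remaining possibility that neither $K_X$ nor $-K_X$ is nef. Since $K_X$ is not nef, Mori's cone and contraction theorems (available for smooth projective varieties in characteristic zero) furnish a $K_X$-negative extremal contraction $\varphi\colon X\to X'$. Taking a fiber $V$ of $\varphi$ of maximum dimension gives $\dim(V)\ge 1$ and $-K_X|_V$ ample, as used in Remark~\ref{rem:nef-nef}(iii). Lemma~\ref{lem:kawamata}(iii) then yields $\dim(W_0)\le 2\dim(X)-\dim(V)$, so the assumption $\dim(W_0)=2\dim(X)-1$ forces $\dim(V)=1$ with the bound saturated; since $-K_X|_V$ is in particular nef, the equality clause of Lemma~\ref{lem:kawamata}(iii) then forces $-K_X$ to be nef, contradicting the case hypothesis. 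The tightness of $\dim(W_0)=2\dim(X)-1$ is precisely what makes this work: one dimension of slack is exactly enough to accommodate a one-dimensional extremal fiber, and the equality statement of Lemma~\ref{lem:kawamata}(iii) converts the saturated inequality into global nefness of $-K_X$.
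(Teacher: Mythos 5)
Your proof is correct and follows essentially the same route as the paper: part (i) via Lemma \ref{lem:i-dimensional} (iii) together with the Calabi--Yau/numerically-trivial-canonical argument, and part (ii) by combining the extremal-contraction fiber of Remark \ref{rem:nef-nef} (iii) with the equality clause of Lemma \ref{lem:kawamata} (iii) to force $K_X$ or $-K_X$ nef, then Proposition \ref{prop:kappa_codim} (i), (ii) to pin down $\nu=1$. You have merely spelled out the details that the paper's two-line proof leaves implicit.
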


\begin{proof}
(i) We can see $\Supp (\Phi^\mc{P}_{X\to Y}(\mc{O}_x))=Y$ by Lemma \ref{lem:i-dimensional} (iii). Then Lemma \ref{lem:fundamental_FM} (iii) implies the statement. 
 
(ii)  Lemma \ref{lem:kawamata} (ii) and (iii) yield that $K_X$ or $-K_X$ is nef. 
Thus, Proposition \ref{prop:kappa_codim} (i) and (ii) imply the result. 
\end{proof}


\begin{rem}
Suppose that  $\kappa (X)=0$ or $1$. Then Corollary \ref{cor:maximum} implies that equality in \eqref{eqn:kappa} cannot be attained unless $X$ is a minimal model. On the other hand, 
in the case $\kappa(X)\ge 2$, 
 equality in \eqref{eqn:kappa} may possibly hold for a non-minimal model $X$ as follows.

Set $X:=S\times \widehat{E}$ and  $Y:=S\times E$ for
an elliptic curve $E$ and a smooth projective surface $S$ of general type. 
Then it satisfies $\kappa (X)=2$. 
Consider a Poincar\'{e} bundle $\mc{P}_E$ on $\widehat{E}\times E$. If 
$h\colon \Delta_S\times \widehat{E}\times E\to  \widehat{E}\times E$ denotes the projection,  then $h^*\mc{P}_E\in D(X\times Y)$ gives rise to  
a Fourier--Mukai transform between $D(X)$ and $D(Y)$ (see \cite[Exercise 5.20]{Hu06}).  
Assume furthermore that $S$ is not minimal. Then $X$ is not minimal, but  equality in
 \eqref{eqn:kappa} holds.
\end{rem}

\section{Fourier--Mukai support dimensions}
Let $X$ and $Y$ be smooth projective varieties, and
consider  a Fourier--Mukai transform
$$
\Phi=\Phi^{\mathcal{P}}_{X\to Y}\colon D(X)\to D(Y).
$$ 
We give names to some special types of Fourier--Mukai transforms:

\begin{defn}\label{defn:diagonal}
A Fourier--Mukai transform $\Phi^\mc{P}_{X\to Y}$ is said to be \emph{$K$-equivalent type},
if there is an element $W_0\in \Comp (\Phi^\mc{P}_{X\to Y})$ such that $\dim (W_0)=\dim (X)$.
Similarly, it is said to be \emph{Calabi--Yau type},
if there is an element $W_0\in \Comp (\Phi^\mc{P}_{X\to Y})$ such that $\dim (W_0)=2\dim (X)$. 
\end{defn}

Note that in both cases, it turns out that the set  $\Comp (\Phi^\mc{P}_{X\to Y})$ consists of the unique element by Lemma \ref{lem:i-dimensional} (ii) and (iii).

\begin{exa}\label{ex:kernel_support}
\begin{enumerate}
\item
Any standard autoequivalences are $K$-equivalent type by the description of the kernel object in 
\eqref{eqn:standard}. 
\item
Let $\alpha$ be a spherical object in $D(X)$. Note that 
$\RHom_{D(X)}(\alpha,\mc{O}_x)=0$ if and only if $x\not\in \Supp (\alpha)$
by \cite[Lemma 4.2]{BM02}.
Then, by the triangle \eqref{eqn:twist_triangle} for $\beta=\mc{O}_x$, 
we see that 
$$
\Supp (T_\alpha(\mc{O}_x))=\Supp (\alpha)
$$
for $x\in \Supp (\alpha)$, and 
$$T_\alpha(\mc{O}_x)=\mc{O}_x$$ 
for  $x\not\in \Supp (\alpha)$.
Consequently, equations \eqref{eqn:LiP} imply 
$$
\Supp (\mc{C})=\Delta_X\cup (\Supp (\alpha) \times  \Supp (\alpha))(\subset X\times X),
$$
where $\mc{C}\in D(X\times X)$ is the kernel object of $T_\alpha$, given in \eqref{eqn:kernel_twist}.

Let $C$ be a $(-2)$-curve on a smooth projective surface $X$.  
Then, the twist functor $T_{\mc{O}_C}$ is $K$-equivalent type. On the other hand,
the twist functor $T_{\mc{O}_X}$ along the structure sheaf $\mc{O}_X$ on a K3 surface 
$X$ is Calabi--Yau type. 
\end{enumerate}
\end{exa}
 
\begin{rem}\label{rem:diagonal}
\begin{enumerate}
\item
Let  $\Phi^\mc{P}_{X\to Y}$ be a  Fourier--Mukai transform  of $K$-equivalent type, 
and take the unique element  $W_0\in \Comp (\Phi^\mc{P}_{X\to Y})$.
Then, Kawamata shows in  the proof of \cite[Theorem 2.3 (2)]{Ka02} that 
$p_X|_{W_0}$ and $p_Y|_{W_0}$ are birational morphisms, and that  
$W_0$ is the graph of the birational map $(p_Y|_{W_0})\circ (p_X|_{W_0})^{-1}$ between $X$ and $Y$.
Moreover, if we take a resolution of singularities $f\colon Z\to W_0$,
then  the linear equivalence $f^*(p_X|_{W_0})^*(K_X)\sim f^*(p_Y|_{W_0})^*(K_Y)$ holds 
(use Lemma \ref{lem:fundamental_FM} (iv), and we can take $m=1$ on the resolution $Z$. See \cite[Proposition 6.19]{Hu06}). 
In other words, varieties $X$ and $Y$ are \emph{K-equivalent}.
\item 
For a given Fourier--Mukai transform $\Phi^\mc{P}_{X\to Y}$, 
if $\dim (\Phi^\mc{P}_{X\to Y}(\mc{O}_x))=0$ for a point $x\in X$, then $\Phi^\mc{P}_{X\to Y}(\mc{O}_x)=\mc{O}_y[i]$ for some point $y\in Y$ and $i\in \Z$
by \cite[Lemma 4.5]{Hu06}.
Moreover, a Fourier--Mukai transform $\Phi^\mc{P}_{X\to Y}$ is a $K$-equivalent type if and only if $\dim (\Phi^\mc{P}_{X\to Y}(\mc{O}_x))=0$ holds for a general point $x\in X$. Consequently, we can see that a composition of  Fourier--Mukai transforms of $K$-equivalent type is again $K$-equivalent type.
\end{enumerate}
\end{rem}

Next let us consider the case $X=Y$, i.e. $\Phi=\Phi^{\mc{P}}\in \Auteq D(X)$. Then we define
$$
N_X:=\max \{\dim (W_0) \mid W_0\in \Comp (\Phi^\mc{P}) \text{ for some }\Phi^{\mc{P}}\in \Auteq D(X) \}
$$
and call it by the \emph{Fourier--Mukai support dimension} of $X$.
Obviously, we have 
$$
\dim (X)\le N_X\le 2\dim (X). 
$$
Let us consider two extreme cases below; the case $N_X=\dim (X)$ and the case $N_X= 2\dim (X)$.

\subsection{$K$-equivalent type}
Define
$$
\Auteq_{\Kequiv}D(X):=\{\Phi \in \Auteq D(X) \mid \Phi \text{ is $K$-equivalent type}\}.
$$
Then, Remark \ref{rem:diagonal}(ii) tells us that $\Auteq_{\}}D(X)$ is a subgroup of  $\Auteq D(X)$.
It is easy to see by definition and Remark \ref{rem:diagonal}(i) that the following conditions are equivalent;
\begin{itemize}
\item
$\Auteq_{\Kequiv}D(X)=\Auteq D(X)$.
\item
$N_X=\dim (X)$.
\item
For any $\Phi^\mc{P}\in \Auteq D(X)$, the set $\Comp (\Phi^\mc{P}{})$ consists of the unique element $W_0(\subset X\times X)$, which is 
the graph of a birational automorphism of $X$.
\end{itemize}
If one, and hence, all of these conditions are satisfied, the autoequivalence group $\Auteq D(X)$ (or, simply $X$) is said to be  \emph{$K$-equivalent type}.


\begin{prop}[Kawamata]\label{prop:BO}
Let $X$ a smooth projective variety with $\pm K_X$ big.
Then  $\Auteq D(X)$ is $K$-equivalent type.
\end{prop}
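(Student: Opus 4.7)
The plan is to derive this directly from Lemma \ref{lem:kawamata} by taking $V=X$, as was already signaled in Remark \ref{rem:nef-nef}(i). By the characterization given just before the proposition, the statement $N_X=\dim(X)$ is equivalent to showing that for every autoequivalence $\Phi^{\mathcal{P}} \in \Auteq D(X)$ and every $W_0 \in \Comp(\Phi^{\mathcal{P}})$, one has $\dim(W_0)=\dim(X)$. Since $W_0$ dominates $X$ by $p_X$, the inequality $\dim(W_0)\geq \dim(X)$ is automatic, so the only content is the reverse inequality.

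First, I would fix $\Phi^{\mathcal{P}} \in \Auteq D(X)$ and $W_0 \in \Comp(\Phi^{\mathcal{P}})$. Then I would split into two cases depending on whether $K_X$ or $-K_X$ is big. If $K_X$ is big, apply Lemma \ref{lem:kawamata}(ii) with $V=X$: since $K_X|_V = K_X$ is big, the inequality \eqref{eqn:WZX} yields
\[
\dim(W_0) \leq 2\dim(X) - \dim(X) = \dim(X).
\]
If instead $-K_X$ is big, the symmetric Lemma \ref{lem:kawamata}(iii) with $V=X$ gives the same bound. Combining with the automatic lower bound, $\dim(W_0)=\dim(X)$.

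Since this equality holds for every component $W_0 \in \Comp(\Phi^{\mathcal{P}})$ and every $\Phi^{\mathcal{P}} \in \Auteq D(X)$, we conclude $N_X = \dim(X)$; equivalently, $\Auteq D(X) = \Auteq_{\Kequiv}D(X)$, which is the asserted $K$-equivalent type condition. There is no real obstacle here beyond invoking the lemma: Kodaira's lemma in the proof of Lemma \ref{lem:kawamata}(ii),(iii) already absorbs the work of comparing the bigness of $\pm K_X$ to the locus $\bigcup_{C \in \mathcal{C}_X} C \subsetneq X$ of curves having trivial intersection with $K_X$, which in turn forces $p_Y|_{W_{0V}}$ to be generically finite and thereby pins down $\dim(W_0)$.
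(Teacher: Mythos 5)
Your proof is correct and follows exactly the paper's own route: the paper proves this by citing Remark \ref{rem:nef-nef}(i), which is precisely the application of Lemma \ref{lem:kawamata}(ii) (resp.\ (iii)) with $V=X$ that you spell out, combined with the trivial lower bound $\dim(W_0)\ge\dim(X)$ from domination. No differences worth noting.
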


\begin{proof}
It follows from Remark \ref{rem:nef-nef} (i). 
\end{proof}

\subsection{Calabi--Yau type}
By definition, the following conditions are equivalent;
\begin{itemize}
\item
There is an autoequivalence  $\Phi$ of $D(X)$ such that $\Phi$ is Calabi--Yau type.
\item
$N_X=2\dim (X)$.
\end{itemize}
If one, and hence, two of these conditions are satisfied, the autoequivalence group $\Auteq D(X)$ (or, simply $X$) is said to be \emph{Calabi--Yau type}.
Note that Corollary \ref{cor:maximum} yields $K_X\equiv 0$ in this case.
It is natural to ask whether the converse is true or not.

\begin{prob}\label{prob:CY}
Suppose that $K_X\equiv 0$. Then, is $\Auteq D(X)$ Calabi--Yau type?
\end{prob}

We give an affirmative answer to Problem \ref{prob:CY} for abelian varieties in Proposition
 \ref{prop:CY}, for curves $X$ in  Theorem \ref{thm:dichotomy}, and 
for surfaces $X$ in Theorem \ref{thm:trichotomy}.


\begin{prop}\label{prop:CY}
Let $X$ be an abelian variety.
Then,  $\Auteq D(X)$ is  Calabi--Yau type.
\end{prop}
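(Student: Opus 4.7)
The plan is to exhibit a single Fourier--Mukai autoequivalence of $D(X)$ whose kernel has an irreducible component of dimension $2\dim(X)$; since $N_X\le 2\dim(X)$ in general, this immediately gives $N_X=2\dim(X)$, i.e.\ Calabi--Yau type.

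To produce such an autoequivalence, I would use the Poincar\'{e} bundle on $X\times \widehat{X}$. Write $g=\dim(X)$ and let $\mc{P}$ denote the normalized Poincar\'{e} bundle, so that Mukai's classical theorem gives the equivalence $\Phi^{\mc{P}}\colon D(X)\to D(\widehat{X})$. Fix an ample line bundle $\mc{M}$ on the abelian variety $\widehat{X}$ and consider the composition
$$\Phi:=(\Phi^{\mc{P}})^{-1}\circ \bigl((-)\otimes \mc{M}\bigr)\circ \Phi^{\mc{P}}\in \Auteq D(X).$$
This is manifestly an autoequivalence of $D(X)$; the goal is to show that its Fourier--Mukai kernel $\mc{K}\in D(X\times X)$ satisfies $\Supp(\mc{K})=X\times X$.

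The key computation is on skyscrapers. For every $x\in X$, $\Phi^{\mc{P}}(\mc{O}_x)\cong \mc{L}_x:=\mc{P}|_{\{x\}\times \widehat{X}}\in \Pic^0(\widehat{X})$, so $\Phi^{\mc{P}}(\mc{O}_x)\otimes \mc{M}\cong \mc{L}_x\otimes \mc{M}$ is an ample line bundle on $\widehat{X}$. Mukai's inversion theorem expresses $(\Phi^{\mc{P}})^{-1}$ as the Poincar\'{e} transform $\Phi^{\mc{P}_{\widehat{X}}}\colon D(\widehat{X})\to D(X)$ composed with $(-1_X)^*$ and a shift by $g$; and the Poincar\'{e} transform of an ample line bundle on an abelian variety is, by the Kodaira-type vanishing $H^i(\widehat{X},\mc{L}_x\otimes \mc{M}\otimes \mc{N})=0$ for $i>0$ and $\mc{N}\in \Pic^0(\widehat{X})$, a locally free sheaf of positive rank on $X$ concentrated in a single cohomological degree, with support equal to all of $X$. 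Pulling back by $-1_X$ and shifting preserve the support, so $\Supp(\Phi(\mc{O}_x))=X$ for every $x\in X$.

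The conclusion then follows from the support identity $\Supp(\mc{K}|_{\{x\}\times X})=\Supp(\Phi(\mc{O}_x))$ recalled in \eqref{eqn:LiP}: since the fiber of $\Supp(\mc{K})$ over every $x\in X$ under the first projection is the whole of $X$, we deduce $\Supp(\mc{K})=X\times X$, and so $X\times X\in \Comp(\Phi^{\mc{K}})$ has dimension $2g$. The only non-routine input is the classical computation that the Fourier--Mukai transform of an ample line bundle on an abelian variety has full support — once this is taken for granted, no serious obstacle remains; the remainder is bookkeeping via \eqref{eqn:LiP} and composition of Fourier--Mukai transforms.
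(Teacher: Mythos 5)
Your proposal is correct, and it constructs the same autoequivalence as the paper does: the conjugate of tensoring by an ample line bundle $\mc{M}$ on $\widehat{X}$ by the Poincar\'e equivalence (the paper uses $\Phi^{\mc{P}}_{\widehat{X}\to X}$ in place of $(\Phi^{\mc{P}})^{-1}$, but these differ only by $(-1_X)^*$ and a shift, which does not affect the support of the kernel). Where you diverge is in the verification that $\Supp(\Phi(\mc{O}_x))=X$. The paper argues cohomologically: using the explicit description \eqref{eqn:PD} of $\Phi^{\mc{P},H}$ via Poincar\'e duality, it computes that the degree-zero component of $\Phi^{\mc{Q},H}(\ch(\mc{O}_x))$ equals $\ch_d(\mc{M})=\frac{1}{d!}c_1(\mc{M})^d\neq 0$, which already forces full support without knowing anything about the structure of $\Phi(\mc{O}_x)$ as a complex. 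You instead argue at the sheaf level: $\Phi^{\mc{P}}(\mc{O}_x)\otimes\mc{M}$ is an ample line bundle, and Mukai's classical computation (index theorem plus cohomology and base change) shows its transform is a locally free sheaf of rank $\chi(\mc{M})>0$ concentrated in one degree, hence has full support. Both routes rest on classical facts about the Poincar\'e transform; the paper's needs only the cohomological formula and the nonvanishing of one component, while yours needs Mukai's inversion theorem and the $\mathrm{IT}_0$ property of ample line bundles but gives the finer conclusion that $\Phi(\mc{O}_x)$ is a shift of a vector bundle. The final step via \eqref{eqn:LiP} is identical in both arguments, and your proof is complete.
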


\begin{proof} 
Let us put $d:=\dim (X)$, and consider the normalizes Poincar\'e bundle $\mc{P}$ on $X\times \widehat{X}$.
Then, the integral functor $\Phi^{\mc{P}}_{X\to \widehat{X}}$ 
is an equivalence (cf.~\cite[Proposition 9.19]{Hu06}), and 
the cohomological Fourier--Mukai transform  $\Phi^{\mc{P},H}_{X\to \widehat{X}}$ induces an isomorphism between
the total cohomologies $H^*(X,\Q)$ and $H^{*}(\widehat{X},\Q)$,
which restricts an isomorphism  
$H^n(X,\Q)$ and $H^{2d-n}(\widehat{X},\Q)$ for any $n$.
The last isomorphism coincides with
\begin{equation}\label{eqn:PD}
(-1)^{\frac{n(n+1)}{2}+d}\cdot \text{PD}_n\colon H^n(X,\Q)\to H^{2d-n}(\widehat{X},\Q)\cong H^{2d-n}(X,\Q)^*,
\end{equation}
where $\text{PD}_n$ is Poincar\'e duality
(see \cite[Lemma 9.23]{Hu06}).
Here note that there is a natural isomorphism between $H^{2d-n}(\widehat{X},\Q)$ and 
$H^{2d-n}(X,\Q)^*$ by the construction of the dual abelian variety $\widehat{X}$. 

For an ample line bundle $\mathcal{L}$ on $\widehat{X}$,  consider the Fourier--Mukai transform 
$$
\Phi^\mc{Q}:=\Phi^{\mc{P}}_{\widehat{X}\to X}\circ ((-)\otimes \mathcal{L})\circ  \Phi^{\mc{P}}_{X\to \widehat{X}}\in \Auteq D(X)
$$
with a kernel object $\mc{Q}\in D(X\times X)$.
The cohomological Fourier--Mukai transform induced by the autoequivalence $(-)\otimes \mc{L}$ of $D(\widehat{X})$ is just multiplying by 
$\ch (\mathcal{L})$. For a point $x\in X$, 
we have $\ch (\mathcal{O}_x)= (0,\ldots,0, 1)\in H^{2*}(X,\Q)$,
and hence \eqref{eqn:PD} yields
\begin{align*}
\Phi^{\mc{Q},H} (\ch (\mathcal{O}_x))
&= \Phi^{\mathcal{P},H}_{\widehat{X}\to X}( \ch (\mathcal{L})\cdot \Phi^{\mathcal{P},H}_{X\to\widehat{X}} ((0,\ldots, 0,1)))\\
&= \Phi^{\mathcal{P},H}_{\widehat{X}\to X}(\ch (\mathcal{L})\cdot (1,0,\ldots, 0))\\
&= \Phi^{\mathcal{P},H}_{\widehat{X}\to X}(\ch (\mathcal{L}))\\
&= (\ch_d (\mc{L}),-\ch_{d-1} (\mc{L}),\ch_{d-2}(\mc{L}),\ldots,(-1)^{d}\ch_0 (\mc{L})).
\end{align*}
Therefore, the $0$-th cohomology component of $\Phi^{\mc{Q},H} (\ch (\mathcal{O}_x))$ is 
$\ch_{d} (\mathcal{L})=\frac{1}{d!}c_1(\mathcal{L})^{d}$, which is not $0$. This means $\Supp (\Phi^{\mc{Q}} (\mc{O}_x))=X$, and hence 
$\Supp \mc{Q}=X\times X$ (see the equations \eqref{eqn:LiP}). In particular, we obtain $N_X=2\dim (X)$.
\end{proof}

Now, we can show  a dichotomy of the autoequivalence groups of smooth projective curves.
 

\begin{thm}[Dichotomy]\label{thm:dichotomy}
Let $C$ be a smooth projective curve with the genus $g(C)$, and $N_C\in\{1,2\}$ be its Fourier--Mukai support dimension. 
\begin{enumerate}
\item
$N_C=2$  (Calabi--Yau type) if and only if $g(C)=1$, namely $C$ is an elliptic curve.
\item
$N_C=1$ ($K$-equivalent type) if and only if $g(C)\ne 1$, namely $C$ is a projective line or a curve of general type.
\end{enumerate}
\end{thm}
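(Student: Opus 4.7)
The proof is essentially a direct application of the results already established, so the plan is short and there is no serious obstacle to overcome. Since $\dim(C) = 1$, the a priori bounds force $N_C \in \{1, 2\}$, so statements (i) and (ii) are logically complementary; it suffices to prove the equivalence in (i), whence (ii) follows by contraposition.

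For the ``if'' direction of (i), I would simply observe that an elliptic curve is an abelian variety of dimension $1$, so Proposition \ref{prop:CY} applies directly and yields $N_C = 2\dim(C) = 2$, i.e.\ $\Auteq D(C)$ is Calabi--Yau type.

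For the ``only if'' direction of (i), suppose $N_C = 2 = 2\dim(C)$. Then Corollary \ref{cor:maximum}(i) gives $K_C \equiv 0$. For a smooth projective curve this means $\deg K_C = 2g(C) - 2 = 0$, hence $g(C) = 1$ and $C$ is an elliptic curve.

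This settles (i), and (ii) then follows automatically from $N_C \in \{1,2\}$. For a more concrete derivation of (ii) one may also argue directly: if $g(C) = 0$ then $-K_C$ is ample, and if $g(C) \ge 2$ then $K_C$ is ample; in either case $\pm K_C$ is big, so Proposition \ref{prop:BO} implies that $\Auteq D(C)$ is $K$-equivalent type, i.e.\ $N_C = \dim(C) = 1$. The only point requiring minor care is noting that $K_C \equiv 0$ is equivalent to $g(C) = 1$ for curves, which is immediate from $\deg K_C = 2g(C) - 2$.
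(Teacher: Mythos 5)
Your proof is correct and follows essentially the same route as the paper: Proposition \ref{prop:CY} handles the elliptic case, and Proposition \ref{prop:BO} (via ampleness of $\pm K_C$) handles the non-elliptic case. The extra detour through Corollary \ref{cor:maximum}(i) for the ``only if'' direction is valid but not needed, since the two cases already exhaust $N_C\in\{1,2\}$.
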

 
\begin{proof}
If $C$ is not an elliptic curve, then $\pm K_C$ is ample. 
Hence, Proposition \ref{prop:BO} tells us that $N_C=1$.
Since  elliptic curves  are $1$-dimensional abelian varieties,  Proposition \ref{prop:CY} completes the proof. 
\end{proof}

Theorem \ref{thm:dichotomy} shows that  Fourier--Mukai support dimensions
 of smooth projective curves reflect their geometry. We obtain a similar result for smooth projective surfaces in Theorem \ref{thm:trichotomy}.


\section{Trichotomy of autoequivalence groups  on  smooth projective surfaces}\label{sec:trichotomy}
In this section we show a similar result in the $2$-dimensional case to Theorem \ref{thm:dichotomy}.


\begin{lem}\label{lem:Gamma_W}
Let $S$ be  smooth projective surface, and 
take $\Phi^\mc{P}\in \Auteq D(S)$ and $W_0\in \Comp (\mc{P})$. 
Then we have $\dim (\mc{P})=\dim (W_0)$. In particular, 
$$
N_S=\max \{ \dim (\mc{P}) \mid \Phi^\mc{P}\in \Auteq D(S)\}
$$
 holds.
\end{lem}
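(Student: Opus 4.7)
The plan is to show that every irreducible component $W$ of $\Gamma := \Supp(\mathcal{P})$ satisfies $\dim(W) \leq \dim(W_0)$, which combined with the fact that $W_0$ is itself a component will give $\dim(\mathcal{P}) = \dim(W_0)$. Since $W_0 \in \Comp(\Phi^{\mathcal{P}})$ dominates $S$ via $p_1$, we have $\dim(W_0) \geq 2$, so the proof splits into three cases according to $\dim(W_0) \in \{2,3,4\}$.

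The two high-dimensional cases are essentially trivial. If $\dim(W_0) = 4$, then Lemma \ref{lem:i-dimensional}(iii) forces $W_0 = S \times S = \Gamma$, and there is only one component. If $\dim(W_0) = 3$, any hypothetical $W \subseteq \Gamma$ of dimension $4$ would likewise satisfy $W = S \times S$, forcing $\Gamma = S \times S$ and $W_0 = S \times S$, contradicting $\dim(W_0) = 3$; hence all components have dimension at most $3$.

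The remaining case $\dim(W_0) = 2$ is the interesting one, and the plan is to exploit the symmetry between $\Phi^{\mathcal{P}}$ and its quasi-inverse $\Phi^{\mathcal{Q}}$. Since $\dim(W_0) = \dim(S)$ and $W_0$ dominates $S$ via $p_1$, Lemma \ref{lem:i-dimensional}(ii) says that $W_0$ is the unique such component, and that $W_0$ also dominates $S$ via $p_2$. Using $\Supp(\mathcal{Q}) = \Supp(\mathcal{P}) = \Gamma$ from \eqref{eqn:PQ}, this in turn means $W_0 \in \Comp(\Phi^{\mathcal{Q}}_{S \to S})$. Applying Lemma \ref{lem:i-dimensional}(ii) a second time, now to the quasi-inverse, shows that $W_0$ is also the unique irreducible component of $\Gamma$ dominating $S$ via $p_2$. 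Any other irreducible component $W$ of $\Gamma$ thus satisfies $\dim(p_1(W)) \leq 1$ and $\dim(p_2(W)) \leq 1$, so Lemma \ref{lem:i-dimensional}(i) gives $\dim(W) \leq 2 = \dim(W_0)$.

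The ``in particular'' statement is then immediate from the definition of $N_S$. I do not expect any genuine obstacle: the whole argument is a short case distinction whose only nontrivial ingredient is the double application of Lemma \ref{lem:i-dimensional}(ii), one for $\Phi^{\mathcal{P}}$ and one for $\Phi^{\mathcal{Q}}$, which bootstraps the uniqueness of dominating components from one projection to both.
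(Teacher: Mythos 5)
Your proposal is correct and follows essentially the same route as the paper: the $\dim(W_0)=4$ and $\dim(W_0)=3$ cases via Lemma \ref{lem:i-dimensional} (iii), and the $\dim(W_0)=2$ case via Lemma \ref{lem:i-dimensional} (ii) applied to both $\Phi^{\mc{P}}$ and its quasi-inverse through \eqref{eqn:PQ} (which is exactly the paper's remark that Lemma \ref{lem:i-dimensional} (i) and (ii) hold with $p_X$ replaced by $p_Y$), followed by the dimension bound from Lemma \ref{lem:i-dimensional} (i). The only difference is organizational: you case-split on $\dim(W_0)$ while the paper establishes the equivalences $\dim(\Gamma)=4\Leftrightarrow\dim(W_0)=4$ and $\dim(\Gamma)=2\Leftrightarrow\dim(W_0)=2$ and handles the middle case by elimination.
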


\begin{proof} 
Set $\Gamma:=\Supp (\mc{P})$. 
Note that  $2\le \dim (W_0)\le \dim (\Gamma)\le 4$.
Obviously,  $\dim (\Gamma)=4$ is equivalent to $\dim (W_0)=4$. 

Suppose that $\dim (W_0)=2$. Then Lemma \ref{lem:i-dimensional} (ii) implies that $W_0$ is the unique irreducible component of $\Gamma$ dominating $S$ by $p_1$,
 and is also the unique irreducible component dominating $S$ by $p_2$.
Hence, Lemma \ref{lem:i-dimensional} (i) forces that there are no $3$-dimensional irreducible components.
In particular,  $\dim (\Gamma)=2$. 
To the contrary, if $\dim (\Gamma) =2$, then $\dim (W_0)=2$ follows. 
This completes the proof.
\if0
$\dim (\Gamma)=3$, and take a $3$-dimensional irreducible component $W$.
Then $\dim (p_1(W))\ge 1$ by Lemma \ref{lem:i-dimensional} (i). If $\dim (p_1(W))= 2$, the proof is done.  If $\dim (p_1(W))= 1$,  Lemma \ref{lem:i-dimensional} (i) and (ii) imply
that there is no $2$-dimensional component dominating $S$. 
Therefore, there is a $3$-dimensional irreducible component $W'$ such that $p_1(W')=S$.    

Finally,  when $\dim (\Gamma)=2$, the result follows from Lemma \ref{lem:i-dimensional} (ii).

Let us proceed the ``if'' part. 
Suppose that  $\dim (\Phi(\mathcal{O}_x))=j-2$ for a general point $x\in S$, and then there is an $j$-dimensional irreducible component
$W$ dominating $S$ by $p_1$.

In the case $j=4$, the proof is done by Lemma \ref{lem:i-dimensional} (iii). 

In the case $j=3$, there is no $4$-dimensional component by  Lemma \ref{lem:i-dimensional} (iii), and we complete the proof.

In the case $j=2$,  there is no $4$-dimensional component, and if there is a $3$-dimensional component $W$,
$p_1(W)$ should be $1$-dimensional. Hence,  Lemma \ref{lem:i-dimensional} (iii) yields $p_2(W)=S$, which contradicts  Lemma \ref{lem:i-dimensional} (i).
\fi
\end{proof}

\begin{rem}
Let $X$ be a Calabi--Yau $3$-fold, i.e. it satisfies $\omega_X\cong\mathcal{O}_X$ and $H^1(X,\mathcal{O}_X)=0$,  and suppose that $X$ contains $E\cong \PP^2$. Note that the normal bundle $\mc{N}_{E/X}$ is isomorphic to $\mc{O}_{\PP^2}(-3)$.
Then, we can see that $\mc{O}_E$ is a spherical object of $D(X)$ (We leave the proof of this fact to readers. Use \cite[Proposition 11.8]{Hu06} and the Local-to-Global Ext spectral sequence). 
The kernel of the twist functor $T_{\mc{O}_E}$ has two irreducible components. One is  supported on the diagonal $\Delta_X$ in $X\times X$ and the other one  on $E\times E$
(see Example \ref{ex:kernel_support} (ii)).  
Hence, Lemma \ref{lem:Gamma_W} is false for higher dimensional varieties. 
\end{rem}

Now, we are in a position to show a trichotomy of autoequivalence groups on smooth projective surfaces.


\begin{thm}[Trichotomy]\label{thm:trichotomy}
Let $S$ be a smooth projective surface and $N_S\in \{2,3,4\}$ be the Fourier--Mukai support dimension of $S$.
\begin{enumerate}
\item
$N_S=4$  (Calabi--Yau type) if and only if $K_S\equiv 0$.
\item
$N_S=3$ if and only if $S$ has a minimal elliptic fibration and $K_S\not \equiv 0$. 
\item
$N_S=2$  ($K$-equivalent type) if and only if $S$ has no minimal elliptic fibration and $K_S\not \equiv 0$.
\end{enumerate}
\end{thm}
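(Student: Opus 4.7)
The trichotomy will be established by proving biconditionals (i) and (ii); part (iii) then follows by elimination since $N_S \in \{2,3,4\}$ by Lemma \ref{lem:Gamma_W} and the standing dimension bounds on $N_S$. Necessity in (i) and (ii) is driven by Corollary \ref{cor:maximum}; sufficiency combines Proposition \ref{prop:CY}, Example \ref{ex:kernel_support}(ii), and Bridgeland's relative Fourier--Mukai equivalences from §\ref{subsec:bridgeland}.

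For part (i), the implication $N_S = 4 \Rightarrow K_S \equiv 0$ is immediate from Corollary \ref{cor:maximum}(i). For the converse, the Enriques--Kodaira classification puts $S$ in one of four classes. If $S$ is abelian, Proposition \ref{prop:CY} gives $N_S = 4$. If $S$ is K3, the spherical object $\mathcal{O}_S$ has full support and Example \ref{ex:kernel_support}(ii) exhibits $T_{\mathcal{O}_S}$ as Calabi--Yau type. For a bielliptic surface $S = A/G$, the autoequivalence of $D(A)$ from Proposition \ref{prop:CY} can be chosen $G$-equivariant (by using a $G$-invariant ample line bundle on $\widehat{A}$, after raising to a power if needed) and descends along the étale quotient $A \to S$ to a Calabi--Yau type autoequivalence of $D(S)$. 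The Enriques case is analogous, descending from the K3 double cover $\widetilde{S} \to S$ via the free Enriques involution.

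For part (ii), the sufficiency direction uses §\ref{subsec:bridgeland}. Given a minimal elliptic fibration $\pi\colon S \to C$ with $K_S \not\equiv 0$, take $b = 1$, so $J_S(1) \cong S$ tautologically; the universal sheaf $\mathcal{U}$ on $S \times_C S \subset S \times S$ defines a nontrivial Fourier--Mukai equivalence (restricting to a genuine Mukai-type transform on smooth fibers) whose kernel has irreducible support of dimension $3$, showing $N_S \ge 3$. Part (i) gives $N_S \le 3$, hence $N_S = 3$. For necessity, assume $N_S = 3$; then $K_S \not\equiv 0$ by (i), and Corollary \ref{cor:maximum}(ii) gives two sub-cases. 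If $K_S$ is nef with $\nu(K_S) = 1$, surface abundance yields $\kappa(S) = 1$, and the Iitaka fibration is a relatively minimal elliptic fibration because $K_S$ nef forces $S$ itself to be minimal. If $-K_S$ is nef with $\nu(-K_S) = 1$, then $|-mK_S|$ for $m \gg 0$ (whose sections are furnished by Riemann--Roch and whose base-point behavior follows from surface-specific results) Stein-factors through a fibration $\pi\colon S \to C'$ whose general fiber $F$ satisfies $K_S \cdot F = 0 = F^2$, hence $g(F) = 1$ by adjunction; any $(-1)$-curve $E$ contained in a fiber would yield the contradiction $-mK_S \cdot E = 0$ while $-K_S \cdot E = 1$, so $\pi$ is relatively minimal.

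Part (iii) is then formal: $N_S = 2 \iff N_S \notin \{3,4\} \iff S$ has no minimal elliptic fibration and $K_S \not\equiv 0$. The main anticipated obstacle is the sufficiency of (i) for Enriques and bielliptic surfaces, since neither admits an obvious spherical object with full support ($\omega_S$ is $2$-torsion but nontrivial on Enriques, and $h^1(\mathcal{O}_A) = 2$ prevents $\mathcal{O}_A$ from being spherical on an abelian surface); one must execute the equivariant descent carefully and verify that the kernel on $S \times S$ retains a $4$-dimensional component, or else supply a direct moduli-theoretic Fourier--Mukai construction specific to each type. A secondary subtlety is Case B of (ii)'s necessity, where producing a genuine elliptic fibration from ``$-K_S$ nef, $\nu(-K_S) = 1$'' requires surface-specific base-point-free arguments rather than a black-box application of Kawamata--Shokurov.
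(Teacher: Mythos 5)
Your overall architecture (necessity from Corollary \ref{cor:maximum}, sufficiency by exhibiting kernels, (iii) by elimination) matches the paper, and several pieces are fine: the abelian case via Proposition \ref{prop:CY}, the K3 case via $T_{\mathcal{O}_S}$ (whose kernel is $I_{\Delta_S}[1]$, so this is literally the paper's example), the Enriques descent via \cite{BM98}, and the sufficiency half of (ii) via the universal sheaf on $J_S(1)\times_C S$. But there are two genuine gaps.

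The serious one is the necessity direction of (ii) in the sub-case ``$-K_S$ nef and $\nu(-K_S)=1$.'' You propose to produce the elliptic fibration from $|-mK_S|$ for $m\gg 0$, i.e.\ purely from the numerical data extracted via Corollary \ref{cor:maximum}(ii), discarding the hypothesis $N_S=3$ at that point. This cannot work: the implication ``$-K_S$ nef, $\nu(-K_S)=1$, $K_S\not\equiv 0$ $\Rightarrow$ $S$ has a minimal elliptic fibration'' is false. Take $S$ the blow-up of $\mathbb{P}^2$ at nine very general points on a smooth cubic $C_0$; then $-K_S=\widetilde{C_0}$ is nef with $(-K_S)^2=0$, but $\mathcal{O}_S(-K_S)|_{\widetilde{C_0}}$ is a non-torsion degree-zero line bundle, so $h^0(-mK_S)=1$ for all $m$ and no multiple of $-K_S$ moves; moreover every isotropic class in $K_S^{\perp}$ is a multiple of $K_S$, so $S$ admits no elliptic fibration at all (and indeed $N_S=2$ for such $S$ by part (iii)). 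No ``surface-specific base-point-free argument'' can rescue this, because the statement you are trying to prove from the numerics alone is false. The paper's proof instead builds the pencil out of the Fourier--Mukai kernel: it takes the $3$-dimensional component $W_0$ dominating $S$, shows via Lemma \ref{lem:fundamental_FM}(iii) that $p_1|_{W_0}$ has $1$-dimensional (hence flat) fibers $W_{0x}$, uses a rational curve $C$ with $K_S\cdot C<0$ to make all $W_{0x}$ ($x\in C$) linearly equivalent, deduces base-point-freeness of $|W_{0x}|$ from the Calabi--Yau property of $\Phi^{-1}(\mathcal{O}_z)$, gets $W_{0x}^2=0$ from the Hodge index theorem, and Stein-factorizes. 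You must use the kernel in this way; this is the heart of the proof and is absent from your argument.

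The second gap is the bielliptic case of (i). Equivariant descent of the autoequivalence of Proposition \ref{prop:CY} is not secured by choosing a $G$-invariant ample bundle on $\widehat{A}$: the Poincar\'e transform intertwines a translation $t_{a*}$ on $A$ with tensoring by a degree-zero line bundle on $\widehat{A}$ (not with a translation), and the bielliptic group $G$ acts by compositions of translations and group automorphisms, so verifying that the composite kernel is invariant under $g\times g$ for all $g\in G$ (with compatible linearizations, as required by \cite{BM98}) is a nontrivial computation you have not done, and it is not clear it can be arranged. The paper avoids this entirely: it uses the two minimal elliptic fibrations $\pi_i\colon S\to E_i/G$ on the bielliptic surface itself and checks that the composition $\Phi_1\circ\Phi_2$ of the two relative Fourier--Mukai transforms has $4$-dimensional kernel support. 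You should adopt that route or supply the equivariance verification in full.
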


\begin{proof}
(i) For each surface $S$ with $K_S\equiv 0$, let us give an example of auto\-equivalence whose kernel object has  $4$-dimensional support. 

First, take a K3 surface $S$ and let $\mc{P}$ be the ideal sheaf $I_{\Delta_S}$ of the diagonal 
$\Delta_S$ in $S\times S$. For $x\in S$, the integral functor $\Phi^{\mc{P}}$ satisfies $\Phi^{\mc{P}}(\mc{O}_x)=I_x$, the ideal sheaf of the point $x$, and then \cite[Corollary 2.8]{BM01} implies that  $\Phi^{\mc{P}}$ is an autoequivalence.

For an abelian surface $S$, we have already shown $N_S=4$ in Proposition \ref{prop:CY}.

Take an Enriques surface $T$. Then there is a K3 surface $S$ with an involution $\iota$ on
 $S$ such that $T$ is the quotient of $S$ by $\Span {\iota}\cong \Z/2\Z$. 
Then it turns out that the autoequivalence  $\Phi^{\mc{P}}$ given above for a K3 surface $S$
descends to an autoequivalence of $D(T)$, and its kernel  has a $4$-dimensional support.    
See \cite[Example 5.2]{BM98} for details.

For a bielliptic surface $S$, there are elliptic curves $E_1, E_2$ and a finite group $G$ acting diagonally on 
$E_1\times E_2$ 
 such that $S=(E_1\times E_2)/G$. 
Therefore, $S$ has two minimal elliptic fibrations $\pi_i\colon S\to E_i/G$. 
Take a universal sheaf $\mc{U}_i$ on $J_{S/(E_i/G)}(1)\times_{E_i/G}S$ ($i=1,2$) given in \S \ref{subsec:bridgeland}. Fix an isomorphism between $J_{S/(E_i/G)}(1)$ and $S$, and 
regard $\Phi_i:=\Phi_{J_{S/(E_i/G)}(1)\to S}^{\mc{U}_i}$ as an autoequivalence of $D(S)$.
Then the kernel of the composition $\Phi_1\circ \Phi_2$ is $4$-dimensional.

Conversely, it follows from Corollary \ref{cor:maximum} (i) that the equality $N_S=4$ implies  the equality
$K_S\equiv 0$.

(ii)
 First note that the equality $N_S=3$ implies that either $K_S$ is nef and  $\nu(S)=1$, or that  $-K_S$ is nef and $\nu(-K_S)=1$ by Corollary \ref{cor:maximum} and Lemma \ref{lem:Gamma_W}.
Moreover, if $K_S$ is nef, it is known that $S$ has a minimal elliptic fibration (cf.~\cite[Proposition~IX.2]{Be96}). Therefore, we consider the only case $-K_S$ is nef and $\nu(-K_S)=1$. Note that in this case, there is a smooth rational curve $C$ on $S$ with $K_S\cdot C< 0$.

Take an autoequivalence $\Phi=\Phi^\mc{P}$ of $D(S)$ with $\dim (\mc{P})=3$. Then Lemma \ref{lem:Gamma_W} implies that there is a $3$-dimensional irreducible component $W_0$ of $\Supp (\mc{P})$ dominating $S$ by $p_1$.
Let us denote by  $W_{0x}(\subset \{x \}\times S)$ the fiber of the morphism $p_1|_{W_{0}}\colon W_0\to S$ over a point $x\in S$, and regard it as a divisor on $S$ by the isomorphism 
$\{x \}\times S\cong S$. If $\dim W_{0x}=2$ for some $x$, 
Lemma \ref{lem:fundamental_FM} (iii) supplies  a  contradiction to $K_S\not\equiv 0$. 
Hence, every fiber of  $p_1|_{W_{0}}$ is $1$-dimensional, and therefore  $p_1|_{W_{0x}}$ is flat (cf.~\cite[Exercise III.10.9]{Ha77}).
Take points $x,y\in C$. Since $C$ is isomorphic to $\mathbb{P}^1$, the points $x$ and $y$ are rationally equivalent $0$-cycles on $S$.
Hence, the divisors $W_{0x}$ and $W_{0y}$ on $S$ are linearly equivalent (see \cite[Theorems 1.1.4, 1.1.7]{Fu98}).

If $\bigcap _{x\in C}W_{0x}\ne \emptyset$, then we see 
$C\subset \Supp (\Phi^{-1} (\mc{O}_z))$ for a point $z\in \bigcap _{x\in C}W_{0x}$.
This contradicts $K_S\cdot C\ne 0$, since $\Phi^{-1} (\mc{O}_z)$ is a Calabi--Yau object. 
Hence, we conclude $\bigcap _{x\in C}W_{0x}= \emptyset$,
 and therefore the complete linear system 
$\mathfrak{d}:=|W_{0x}|$ is base point free. 
Moreover, note that  $K_S\cdot W_{0x}=0$ for each  $x\in C$, since $W_{0x}$ is contained in $\Supp (\Phi (\mc{O}_x))$.
Furthermore, the Hodge index theorem implies $W_{0x}\cdot W_{0x}=0$. 
Then we can see that  $\mathfrak{d}$ defines a minimal elliptic fibration, after taking the Stein factorization if necessary.

Conversely, if $S$ has a minimal elliptic fibration,   take a universal sheaf $\mathcal{U}$ on 
$J_S(1)\times S$. Then $\Phi^\mathcal{U}_{J_S(1)\to S}$ is a Fourier--Mukai transform.
Since $J_S(1)\cong S$ and $\dim (\mathcal{U})=3$, we obtain $N_S=3$. 

(iii) The result follows from (i) and (ii).
\end{proof}


For a smooth projective curve $C$, every Fourier--Mukai partner of $C$ is isomorphic to $C$. Therefore, 
it is obvious that Fourier--Mukai support dimension is a derived invariant for smooth projective curves.
In the surface case, a similar result holds.

\begin{cor}
Fourier--Mukai support dimension is a derived invariant for smooth projective surfaces, i.e.
if smooth projective surfaces $S$ and $T$ are Fourier--Mukai partners, then $N_S=N_T$.
\end{cor}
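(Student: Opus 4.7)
The plan is to derive the corollary directly from Theorem~\ref{thm:trichotomy}: since $N_S \in \{2,3,4\}$ is determined by (a) whether $K_S \equiv 0$, and (b) whether $S$ admits a minimal elliptic fibration when $K_S \not\equiv 0$, it suffices to show that both conditions are preserved under Fourier--Mukai equivalence.

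For (a), I would observe that $K_S \equiv 0$ is equivalent to the conjunction ``$K_S$ is nef and $-K_S$ is nef''. By Remark~\ref{rem:nef-nef}(ii), each of these nefness conditions is a derived invariant, so $K_S \equiv 0 \Leftrightarrow K_T \equiv 0$. This settles the case $N_S = 4$ versus $N_S \in \{2,3\}$.

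For (b), assume $K_S, K_T \not\equiv 0$. The proof of Theorem~\ref{thm:trichotomy}(ii) shows that $N_S = 3$ forces either $K_S$ to be nef with $\nu(S) = 1$, or $-K_S$ to be nef with $\nu(-K_S) = 1$; by Remark~\ref{rem:nef-nef}(ii), the corresponding nef/numerical condition passes to $T$. When $K_T$ is nef with $\nu(T) = 1$, abundance for surfaces yields $\kappa(T) = 1$, so the Iitaka fibration is a minimal elliptic fibration on $T$, whence $N_T = 3$ by Theorem~\ref{thm:trichotomy}(ii). When $-K_T$ is nef with $\nu(-K_T) = 1$, the numerical data alone does not force such a fibration (as examples of $\PP^1$-bundles over elliptic curves show); here one must exploit the extra input that $T$ is a Fourier--Mukai partner of the minimal elliptic surface $S$, and invoke the Bridgeland--Maciocia classification of Fourier--Mukai partners of elliptic surfaces to identify $T$ with a relative Jacobian $J_S(b)$, which itself admits a minimal elliptic fibration. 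Thus $N_T = 3$ in either case, and the equivalence $N_S = 2 \Leftrightarrow N_T = 2$ then follows by elimination.

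The main obstacle is the second subcase of (b): the condition that $-K_T$ be nef with $\nu(-K_T) = 1$ is necessary but not sufficient for $N_T = 3$, so the derived invariance here cannot be read off from the numerical invariants of $T$ alone and must be argued via the Fourier--Mukai partnership with $S$.
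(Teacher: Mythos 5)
Your proposal is correct, but it is organized quite differently from the paper's own proof, which is a two-line appeal to the classification of Fourier--Mukai partners of surfaces: if $T\not\cong S$, then by \cite{BM01, Ka02} both are K3, both abelian, or both minimal elliptic surfaces of the same type, and Theorem~\ref{thm:trichotomy} finishes. You instead reduce the statement to the derived invariance of the two conditions characterizing the trichotomy and settle as much as possible by purely numerical derived invariants (nefness of $\pm K$ and the numerical Iitaka dimension via Remark~\ref{rem:nef-nef}(ii), plus abundance for surfaces), invoking the classification only in the single branch where $-K_S$ is nef with $\nu(-K_S)=1$, i.e.\ where $S$ is a rational elliptic or elliptic ruled surface. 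This buys two things: it makes transparent that the case $N_S=4$ and the ``canonical'' branch of $N_S=3$ need nothing beyond Kawamata's nefness results already quoted in the paper, and it isolates precisely the one subcase where the full partner classification is genuinely indispensable (your observation that the numerical data $-K_T$ nef, $\nu(-K_T)=1$ does not by itself produce an elliptic fibration, witnessed by non-torsion $\PP^1$-bundles over elliptic curves, is exactly right and is the reason the paper cannot argue purely numerically either). Two small points of care in that last subcase: you should cite \cite{Ka02} alongside \cite{BM01}, since $S$ may be a non-minimal surface there (a rational elliptic surface), in which case the relevant statement is Kawamata's result that such a surface has no partner other than itself rather than the relative Jacobian description (cf.\ also \cite{Ue17} for the elliptic ruled case); and note that for the conclusion you only need $T$ to admit a minimal elliptic fibration, which holds both when $T\cong S$ and when $T\cong J_S(b)$, so the trichotomy applies in either event. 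With those citations adjusted, your argument is complete and no less rigorous than the paper's.
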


\begin{proof}
Let $T$ be a Fourier--Mukai partner of a smooth projective surface $S$.
Suppose that $T$ is not isomorphic to $S$. Then
$S$ is either a K3 surface, an abelian surface or a minimal elliptic surface, and
moreover, $T$ is also a surface of the same type as $S$ (\cite{BM01, Ka02}). Therefore, 
Theorem \ref{thm:trichotomy} implies the conclusion. 
\end{proof}

\begin{conj}\label{conj:invariant}
\begin{enumerate}
\item
Fourier--Mukai support dimension is a derived invariant for smooth projective varieties, i.e.
if  smooth projective varieties $X$ and $Y$ are Fourier--Mukai partners, then $N_X=N_Y$.
\item
Let $X$ and $Y$ be a smooth projective varieties. 
Assume that $X$ is of $K$-equivalent type and that there is a Fourier--Mukai transform $\Phi^\mc{P}_{X\to Y}$.
Then,  $\Phi^\mc{P}_{X\to Y}$ is $K$-equivalent type. 
\end{enumerate}
\end{conj}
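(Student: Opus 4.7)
My approach treats (ii) first and attempts to reduce (i) to it in the extremal cases. For (ii), I would argue by contradiction. Suppose $\Phi^\mc{P}_{X\to Y}$ is not $K$-equivalent type. By Remark \ref{rem:diagonal}(ii), for a general $x\in X$ we have $\dim \Phi^\mc{P}(\mc{O}_x)\geq 1$. Pick an ample line bundle $\mc{L}$ on $Y$ and form the autoequivalence
$$\Psi := (\Phi^\mc{P})^{-1}\circ ((-)\otimes \mc{L})\circ \Phi^\mc{P}\in \Auteq D(X),$$
which must be $K$-equivalent type by hypothesis on $X$. The plan is to derive a contradiction by computing $v(\Psi(\mc{O}_x))$: the Mukai vector $v(\Phi^\mc{P}(\mc{O}_x))$ has a non-zero component in some degree $\le \dim Y - 1$, multiplication by $v(\mc{L})=1+c_1(\mc{L})+\cdots$ with $c_1(\mc{L})$ ample preserves non-triviality in low degree, and applying $\Phi^{\mc{P}^{-1},H}$ should transport the non-triviality back to degree $<\dim X$. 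That would contradict the $K$-equivalent-type conclusion $\Psi(\mc{O}_x)\cong \mc{O}_{x'}[i]$, whose Mukai vector is concentrated in top degree.

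For (i), I would reduce to (ii) in the $K$-equivalent case as follows. Assume $N_X=\dim X$ and let $\Phi^\mc{Q}_{X\to Y}$ be a Fourier--Mukai transform. Any $\Psi\in \Auteq D(Y)$ yields Fourier--Mukai transforms $\Phi^\mc{Q}$ and $\Psi\circ \Phi^\mc{Q}$ out of $D(X)$; by (ii) both are $K$-equivalent type, and then $\Psi=(\Psi\circ \Phi^\mc{Q})\circ (\Phi^\mc{Q})^{-1}$ is a composition of $K$-equivalent-type Fourier--Mukai transforms between possibly distinct varieties. Extending Remark \ref{rem:diagonal}(ii) to this setting (the argument there depends only on the generic fiber dimension of $\Phi(\mc{O}_x)$ and goes through verbatim) gives that $\Psi$ is $K$-equivalent, so $N_Y=\dim Y=\dim X=N_X$. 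The Calabi--Yau case $N_X=2\dim X$ I would approach through the chain of implications $N_X=2\dim X \Rightarrow K_X\equiv 0$ (Corollary \ref{cor:maximum}(i)); $K_X\equiv 0 \Rightarrow K_Y\equiv 0$ (a derived invariance combining Remark \ref{rem:nef-nef}(ii) applied to $K_X$ and $-K_X$ with Kawamata's invariance of numerical Kodaira dimension); and $K_Y\equiv 0 \Rightarrow N_Y=2\dim Y$ (this is Problem \ref{prob:CY}, known for curves, abelian varieties, and surfaces). For intermediate values of $N_X$, I would aim to express $N_X$ in terms of the numerical Iitaka dimensions $\nu(K_X)$ and $\nu(-K_X)$ together with fibration-theoretic data, both of which can be shown to be derived invariants via the same package of results.

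The chief obstacle lies in two places. First, in (ii), the Mukai-vector argument rests on the non-trivial claim that $\Phi^{\mc{P}^{-1},H}$ does not shuffle non-zero low-degree components into top degree in an unlucky way; making this rigorous likely requires passing to the codimension filtration on $D(X)$ and $D(Y)$ and showing that $\Phi^\mc{P}$ induces only bounded shifts with respect to it. Second, in (i), the intermediate range $\dim X<N_X<2\dim X$ encodes non-trivial birational information about $X$ (in the surface case, the existence of a minimal elliptic fibration); a higher-dimensional analogue of Theorem \ref{thm:trichotomy} is not currently available, so this part of the conjecture may genuinely require a new idea, or at least the settling of substantial portions of the minimal model program and abundance conjecture.
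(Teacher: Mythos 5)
This statement is posed in the paper as a \emph{conjecture}: the paper contains no proof of either part, only a remark observing that (ii) implies (i) for varieties of $K$-equivalent type (because a composition of Fourier--Mukai transforms of $K$-equivalent type is again of $K$-equivalent type, cf.\ Remark \ref{rem:diagonal}(ii)), and that (ii) is a special case of Kawamata's D-equivalence/K-equivalence conjecture, to which the author himself produced a counterexample in its general form in \cite{Ue04}. Your reduction of (i) to (ii) in the $K$-equivalent case is exactly the paper's remark (note that Remark \ref{rem:diagonal}(ii) is already stated for transforms between distinct varieties, so no extension is needed), and your treatment of the Calabi--Yau case correctly isolates the open Problem \ref{prob:CY} as the missing ingredient. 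So the honest parts of your proposal reproduce what the paper already says; the question is whether your new argument for (ii) closes the gap. It does not.

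The Mukai-vector argument for (ii) has two genuine holes beyond the one you flag. First, writing $u=v(\Phi^{\mc{P}}(\mc{O}_x))$, your contradiction requires that $u$ have a non-zero component in cohomological degree $k_0<2\dim(Y)$ and that $c_1(\mc{L})\cdot u_{k_0}\ne 0$. Hard Lefschetz gives injectivity of cup product with $c_1(\mc{L})$ on $H^{j}(Y,\Q)$ only for $j<\dim(Y)$; for $\dim (Y)\le j<2\dim (Y)$ the map $H^j\to H^{j+2}$ can have kernel, so the ``shuffling'' problem you mention is not merely technical. Second, and more fundamentally, $\dim\Supp(\Phi^{\mc{P}}(\mc{O}_x))\ge 1$ does not imply that $v(\Phi^{\mc{P}}(\mc{O}_x))$ has any non-zero component below top degree: the Chern character of a complex is an alternating sum over its cohomology sheaves, and the fundamental classes of the supports can cancel (e.g.\ the lowest-degree term of $\ch(\mc{F}\oplus\mc{G}[1])$ vanishes whenever $\mc{F}$ and $\mc{G}$ have the same support cycle). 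So a purely cohomological argument cannot detect the positive-dimensionality of the support, which is the quantity the conjecture is about; this is presumably why the statement remains open. Your proposal should therefore be presented as a strategy with identified obstructions, not as a proof, and the intermediate-range part of (i) indeed requires input (a higher-dimensional analogue of Theorem \ref{thm:trichotomy}) that does not currently exist.
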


\begin{rem}
\begin{enumerate}
\item Conjecture \ref{conj:invariant} (ii) implies Conjecture \ref{conj:invariant} (i)  for smooth projective varieties of $K$-equivalent type.
In fact, suppose that Conjecture \ref{conj:invariant} (ii) is true. Then $Y$ in  Conjecture \ref{conj:invariant} (ii)  is also $K$-equivalent type, since
a composition of  Fourier--Mukai transforms of $K$-equivalent type is  $K$-equivalent type.
In particular, Conjecture \ref{conj:invariant} (i) is true for $X$ of $K$-equivalent type.
\item
Kawamata predicts in \cite[Conjecture 1.2]{Ka02} that birationally equivalent, derived equivalent smooth projective varieties are $K$-equivalent,
but a counterexample to his conjecture is discovered by the author in \cite{Ue04}.
 Conjecture \ref{conj:invariant} (ii) is a special version of Kawamata's conjecture, since $X$ and $Y$ are 
$K$-equivalent by Remark \ref{rem:diagonal} (i) when a Fourier--Mukai transform $\Phi^\mc{P}_{X\to Y}$ is $K$-equivalent type. 
\end{enumerate}
\end{rem}


\section{Autoequivalence groups of $K$-equivalent type}
Let $S$ be  smooth projective surface and take $\Phi=\Phi^\mc{P}\in \Auteq_{\Kequiv} D(S)$.
Set $$\Gamma:=\Supp (\mathcal{P}).$$ 
Then $\dim \Gamma=2$ by Lemma \ref{lem:Gamma_W}, and Lemma \ref{lem:i-dimensional} (ii) yields that there is the unique 
 component $W_0$ of $\Gamma$ dominating $S$ by both of $p_1$ and $p_2$. Note that $\Gamma_x$ is at most $1$-dimensional for $x\in S$.

Let us denote by $Z$  the union of $(-2)$-curves on $S$.
The set $Z$ has finitely many connected components, since the Picard number $\rho(S)$ is finite. But it can possibly have infinitely many irreducible components.
If a K3 surface $S$ contains $(-2)$-curves, and $S$ admits the infinite automorphism group, then the set $Z$ on $S$ is an example of such.  

We first show Proposition \ref{prop:S_to_Z} below.
We need several claims to prove it.
Take a $(-2)$-curve $C$ on $S$ and $\mc{L}\in \Pic (C)$.
We regard $\mc{L}$ as an object of $D(S)$ in a natural way.


\begin{cla}\label{cla:pure_rigid}
We have $\dim (\Phi (\mc{L}))=1$. Moreover, every cohomology sheaf $\mc{H}^i(\Phi(\mc{L}))$ 
is rigid and pure $1$-dimensional.
\end{cla}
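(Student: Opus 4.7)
The plan is to establish three facts about $\alpha := \Phi(\mc{L})$: that $\alpha$ is a nonzero spherical object, that $\dim \alpha = 1$, and that each cohomology sheaf $\mc{H}^i(\alpha)$ is rigid and pure of dimension $1$. Since $\mc{L}$ is a line bundle on the $(-2)$-curve $C$, it is spherical by \S\ref{subsec:twist_functor}; equivalences preserve this property, so $\alpha$ is spherical, in particular nonzero, Calabi--Yau, and satisfies $\Hom^1_{D(S)}(\alpha,\alpha)=0$.

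For the upper bound $\dim \alpha \le 1$, I rely on the hypothesis that $\Phi$ is of $K$-equivalent type. Let $f\colon S\dashrightarrow S$ be the birational automorphism whose graph is $W_0$. We always have $\Supp \alpha \subset p_2(\Gamma\cap p_1^{-1}(C))$. By Remark~\ref{rem:diagonal}, $\Phi^{-1}$ is again of $K$-equivalent type, so outside a proper closed subset $B\subsetneq S$ the fiber $\Gamma\cap p_2^{-1}(y)$ reduces to the single point $\{f^{-1}(y)\}$; hence for $y\notin B$ membership in $p_2(\Gamma\cap p_1^{-1}(C))$ is equivalent to $y\in f(C)$, yielding $\Supp \alpha \subset \overline{f(C)}\cup B$ of dimension at most $1$. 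For the lower bound, Hirzebruch--Riemann--Roch gives $\chi(\mc{F},\mc{F})=0$ for any $0$-dimensional coherent sheaf $\mc{F}$ on $S$, and combined with Serre duality (noting $\mc{F}\otimes\omega_S\cong \mc{F}$ automatically when $\mc{F}$ is $0$-dimensional) this forces $\dim \Ext^1_S(\mc{F},\mc{F}) = 2\dim\End(\mc{F})>0$; the spectral sequence below then excludes $\dim \alpha = 0$, so $\dim \alpha = 1$.

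For rigidity of each $\mc{H}^i(\alpha)$, I invoke the hypercohomology spectral sequence
\begin{equation*}
E_2^{p,q} = \bigoplus_i \Ext^p_S(\mc{H}^i(\alpha),\mc{H}^{i+q}(\alpha)) \Longrightarrow \Hom^{p+q}_{D(S)}(\alpha,\alpha).
\end{equation*}
Since $\dim S = 2$ gives $\Ext^p_S=0$ for $p\ge 3$, the differentials $d_r\colon E_r^{1,0}\to E_r^{r+1,1-r}$ and all differentials landing in $E_r^{1,0}$ have a vanishing target or source for every $r\ge 2$. Hence $E_2^{1,0}=E_\infty^{1,0}$, a subquotient of the vanishing group $\Hom^1_{D(S)}(\alpha,\alpha)$. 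Therefore $\bigoplus_i \Ext^1_S(\mc{H}^i(\alpha),\mc{H}^i(\alpha))=0$, so each cohomology sheaf is rigid.

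The main obstacle is showing purity. Each $\mc{H}^i(\alpha)$ is Calabi--Yau (\S\ref{subsec:twist_functor}), rigid by the previous step, and has support contained in the at-most-$1$-dimensional set $\Supp \alpha$; the $0$-dimensional case is excluded by the lower-bound argument above. What remains is to rule out any nonzero $0$-dimensional subsheaf $T\subset \mc{H}^i(\alpha)$. My plan is to apply the functors $\Hom(-,\mc{H}^i(\alpha))$ and $\Hom(\mc{H}^i(\alpha),-)$ to the torsion short exact sequence $0\to T\to \mc{H}^i(\alpha)\to M\to 0$, then combine the resulting long exact Ext-sequences with the Calabi--Yau Serre-duality $\Ext^k_S(A,\mc{H}^i(\alpha))\cong \Ext^{2-k}_S(\mc{H}^i(\alpha),A)^*$ (valid for $A=T$, since $T\otimes\omega_S=T$) and with the rigidity $\Ext^1_S(\mc{H}^i(\alpha),\mc{H}^i(\alpha))=0$ from the previous step, ultimately forcing $T=0$. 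The delicate point here is that rigidity plus Calabi--Yau is not quite strong enough on its own to force purity in general; I anticipate having to exploit the specific structure of $\alpha=\Phi(\mc{L})$, for instance by transporting constraints back along $\Phi^{-1}$ using that $\Phi^{-1}(\mc{O}_y)$ is a shifted skyscraper for generic $y$, to complete the argument.
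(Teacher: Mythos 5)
Your arguments for $\dim (\Phi(\mc{L}))=1$ and for the rigidity of each cohomology sheaf are correct, and they are essentially self-contained versions of what the paper obtains by citation: the upper bound from the structure of $\Gamma$ (the paper argues more directly that $p_1^{-1}(C)\cap\Gamma$ is $1$-dimensional because $W_0$ is the unique component dominating $S$ by $p_2$), the lower bound from the non-rigidity of $0$-dimensional objects (the paper uses \cite[Lemma 4.5]{Hu06}), and rigidity from the hypercohomology spectral sequence, which is exactly the content of the cited \cite[Proposition 3.5]{IU05}. The genuine gap is purity: you only sketch a plan for ruling out a nonzero maximal $0$-dimensional subsheaf $T\subset \mc{H}^i(\Phi(\mc{L}))$, and then express doubt that rigidity plus the Calabi--Yau property suffices, suggesting one might need to transport information back along $\Phi^{-1}$. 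As written, this part of the claim is not proved. (The paper disposes of it by citing \cite[Lemma 3.9]{IU05}.)

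Your hesitation is, however, unnecessary: the plan you describe does close, by precisely the device the paper uses one claim later, in the proof of Claim \ref{cla:supp_L}. Write $0\to T\to \mc{M}\to M\to 0$ with $\mc{M}=\mc{H}^i(\Phi(\mc{L}))$ and $M=\mc{M}/T$ pure $1$-dimensional (the case $M=0$, i.e. $\mc{M}$ nonzero and $0$-dimensional, is already excluded because such a sheaf is not rigid). Then $\Hom_S(T,M)=0$ since $M$ has no $0$-dimensional subsheaf, and Serre duality gives $\Ext^2_S(M,T)\cong \Hom_S(T,M\otimes\omega_S)^\vee=0$, since $M\otimes\omega_S$ is again pure $1$-dimensional; no Calabi--Yau property of $M$ is needed. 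Now \cite[Lemma 2.2 (2)]{KO95} transfers the rigidity of $\mc{M}$ to rigidity of $T$, while your own Riemann--Roch/Serre-duality computation shows that a nonzero $0$-dimensional sheaf satisfies $\dim \Ext^1_S(T,T)=2\dim \Hom_S(T,T)>0$. Hence $T=0$, and no analysis of $\Phi^{-1}(\mc{O}_y)$ is required.
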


\begin{proof}
Note that  $\Supp(\Phi (\mc{L}))$ is contained in $p_2(p_1^{-1}(C)\cap \Gamma)$.
Then, we see  $\dim (\Phi (\mc{L}))\le 1$, since $W_0$ is the unique component dominating $S$ by $p_2$.
Since $\mc{L}$ is rigid on $S$, but $\mc{O}_x$ is not rigid for any $x\in S$,
we have $\dim (\Phi (\mc{L}))>0$ 
by \cite[Lemma 4.5]{Hu06}.
Moreover, \cite[Proposition 3.5]{IU05} implies that $\mc{H}^i(\Phi(\mc{L}))$ is rigid, and  
it is pure $1$-dimensional by \cite[Lemma 3.9]{IU05}.
\end{proof}


\begin{cla}\label{cla:supp_L}
We have $\Supp (\Phi(\mc{L}))\subset Z$.
\end{cla}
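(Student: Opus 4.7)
The plan is to combine Claim \ref{cla:pure_rigid} with the Calabi--Yau nature of $\mc{L}$. First, $\mc{L}$ is itself a Calabi--Yau object in $D(S)$, because adjunction on the $(-2)$-curve $C$ gives $\omega_{S}|_{C}\cong\mc{O}_{C}$, whence $\mc{L}\otimes\omega_{S}\cong\mc{L}$, which is precisely \eqref{eqn:alpha_omega}. Since autoequivalences preserve the Calabi--Yau property, and since every cohomology sheaf of a Calabi--Yau object is a Calabi--Yau sheaf (both facts recorded in \S\ref{subsec:twist_functor}), each $\mc{F}_{i}:=\mc{H}^{i}(\Phi(\mc{L}))$ is a Calabi--Yau sheaf.

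Second, I would invoke the consequence of \eqref{ali:CY-property} displayed in \S\ref{subsec:twist_functor}: every irreducible curve $D\subset\Supp(\mc{F}_{i})$ satisfies $K_{S}\cdot D=0$. Hence every irreducible component of $\Supp(\Phi(\mc{L}))=\bigcup_{i}\Supp(\mc{F}_{i})$ has zero intersection with $K_{S}$.

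Third, I promote $K_{S}\cdot D=0$ to $D\subset Z$ using rigidity and purity. For the pure one-dimensional sheaf $\mc{F}_{i}$, Riemann--Roch on $S$ gives
\[
\chi(\mc{F}_{i},\mc{F}_{i})=-\ch_{1}(\mc{F}_{i})^{2};
\]
rigidity yields $\Ext^{1}(\mc{F}_{i},\mc{F}_{i})=0$; and the Calabi--Yau identity $\mc{F}_{i}\otimes\omega_{S}\cong\mc{F}_{i}$ combined with Serre duality gives $\dim\Ext^{2}(\mc{F}_{i},\mc{F}_{i})=\dim\Hom(\mc{F}_{i},\mc{F}_{i})\geq 1$, so $\chi(\mc{F}_{i},\mc{F}_{i})\geq 2$ and hence $\ch_{1}(\mc{F}_{i})^{2}\leq -2$. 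Localising this estimate to each irreducible component of $\Supp(\mc{F}_{i})$---either through the canonical filtration of $\mc{F}_{i}$ by pure subsheaves supported on subcollections of components, or by an appeal to the rigidity--plus--Calabi--Yau analysis in \cite[\S 3]{IU05}---yields $D^{2}<0$ for each such component $D$. Combined with $K_{S}\cdot D=0$ and adjunction $D^{2}=2p_{a}(D)-2$, this forces $p_{a}(D)=0$ and $D^{2}=-2$, so $D$ is a smooth rational $(-2)$-curve and $D\subset Z$.

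The main obstacle is the localisation step: extracting, for each irreducible component $D$ of $\Supp(\mc{F}_{i})$, a nonzero pure one-dimensional rigid Calabi--Yau sheaf supported on $D$ (or on a connected sub-configuration containing $D$) to which the Riemann--Roch computation directly applies. Once this book-keeping is arranged---isolating components using that pure one-dimensional sheaves split along disconnected supports, and using the torsion filtration within a connected support---the numerical conclusion is immediate and Claim \ref{cla:supp_L} follows.
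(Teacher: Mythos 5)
Your overall strategy coincides with the paper's: reduce to the cohomology sheaves $\mc{F}_i=\mc{H}^i(\Phi(\mc{L}))$, which are rigid, pure one-dimensional Calabi--Yau sheaves (Claim \ref{cla:pure_rigid} together with \S\ref{subsec:twist_functor}), deduce $K_S\cdot D=0$ for every irreducible component $D$ of the support, and then use rigidity plus Riemann--Roch to force $D^2<0$. Your first two steps and the global estimate $c_1(\mc{F}_i)^2\le -2$ are correct.

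However, the localisation step, which you yourself flag as ``the main obstacle,'' is a genuine gap and not mere book-keeping. The difficulty is that rigidity does not pass to subsheaves or quotients in general, so it is not enough to ``isolate components'': you must produce, for each irreducible component $D$, a nonzero pure sheaf supported on $D$ alone that is actually \emph{rigid}, and neither splitting along disconnected supports (a connected component of $\Supp(\mc{F}_i)$ may still contain several irreducible curves, some a priori with nonnegative self-intersection) nor an unspecified ``torsion filtration'' supplies this. The paper's proof does exactly this work: writing $\Supp(\mc{M})=E\cup D$ with $D\not\subset E$, it forms the sequence $0\to\mc{H}^0_E(\mc{M})\to\mc{M}\to\mc{K}\to 0$, proves by a local-section argument (using purity of $\mc{M}$) that the quotient $\mc{K}$ is pure one-dimensional with support $D$, and then establishes $\Ext^2_S(\mc{K},\mc{H}^0_E(\mc{M}))\cong\Hom_S(\mc{H}^0_E(\mc{M}),\mc{K})^\vee=0$ (Serre duality plus purity of $\mc{K}$), which is precisely the hypothesis of \cite[Lemma 2.2 (2)]{KO95} needed to conclude that the quotient $\mc{K}$ is rigid. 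Only then does $\chi(\mc{K},\mc{K})=-c_1(\mc{K})^2\ge 2$ apply to a sheaf whose first Chern class is a positive multiple of $D$, giving $D^2<0$ and hence, with $K_S\cdot D=0$ and adjunction, $D\subset Z$. Your write-up needs this Ext-vanishing/rigidity-of-the-quotient argument (or an equivalent substitute) to become a proof.
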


\begin{proof}
Take an irreducible component $D$ of $\Supp (\Phi(\mc{L}))$. 
Then, we see $D\cdot K_S=0$, since $\Phi(\mc{L})$ is a Calabi--Yau object.

Take an integer $i$ such that $\Supp (\mc{H}^i(\Phi(\mc{L})))$ contains the irreducible curve $D$.  
Let us set 
$$
\mc{M} :=\mc{H}^i(\Phi(\mc{L}))
$$
and 
$$
\Supp (\mc{M})=E\cup D,
$$
where the closed subset $E$ does not contain $D$.
Then consider the short exact sequence 
$$
0\to \mc{H}^0_{E}(\mc{M}) \to \mc{M}\stackrel{\phi}{\to} \mc{K}\to 0
$$
in $\Coh (S)$, where $\mc{H}^0_{E}(\mc{M})$ is the subsheaf with supports in $E$ (cf. \cite[Exercise II.1.20]{Ha77}).  

Note that $\Supp (\mc{K})=D$ and hence $\dim (\mc{H}^0_{E}(\mc{M})) \cap D\le 0$. 
Assume for a contradiction that $\mc{K}$ is not pure $1$-dimensional.
Then, there is a local section $s$ of $\mc{K}$ such that $s(x)\ne 0$ for some point $x\in S$, 
but $s(y)=0$ for all point $y\in S$ except $x$.
Let $t$ be a local section of $\mc{M}$ which is a lift of $s$. If $x\not\in E$, then $\phi$ is an isomorphism around the point 
$x$, and hence $t$ generates a $0$-dimensional subsheaf of $\mc{M}$,
which contradicts Claim \ref{cla:pure_rigid}.
Suppose that $x\in E$. Then, $t$ gives a local section of  $\mc{H}^0_{E}(\mc{M})$, and hence
$s=\phi(t)$ should be $0$, which also gives a contradiction.
Therefore we can conclude that $\mc{K}$ is pure $1$-dimensional.
 Thus,  Serre duality yields  
$$
\Ext^2_S( \mc{K}, \mc{H}^0_{E}(\mc{M}) )=\Hom_S (\mc{H}^0_{E}(\mc{M}) , \mc{K})^\vee=0,
$$
and then, \cite[Lemma 2.2 (2)]{KO95} implies that
$\mc{K}$ is rigid. Therefore, we have
\begin{align*}
2\le &\dim \Hom_S(\mc{K},\mc{K})+\dim \Ext ^2_S(\mc{K},\mc{K})\\
 =&\chi(\mc{K},\mc{K})=-c_1(\mc{K})\cdot c_1(\mc{K}).
\end{align*}
Consequently, we have $D^2<0$ and hence, $D$ is a $(-2)$-curve.
\end{proof}


\begin{cla}\label{cla:Z_to_Z}
$\Supp (\Phi(\mc{O}_x))$ is contained in $Z$ for any point  $x\in Z$. 
\end{cla}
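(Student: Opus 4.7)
The plan is to reduce the claim to Claim \ref{cla:supp_L} by resolving a skyscraper sheaf at $x$ by line bundles on a single $(-2)$-curve through $x$. First, since $x\in Z$, I would pick a $(-2)$-curve $C$ on $S$ containing $x$. Because $C\cong \mathbb{P}^1$ is smooth, the point $x$ is a Cartier divisor on $C$, so $\mc{O}_C(-x)$ and $\mc{O}_C$ are line bundles on $C$ in the sense of Claim \ref{cla:supp_L}.

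Next, I would use the short exact sequence
$$
0\to \mc{O}_C(-x)\to \mc{O}_C\to \mc{O}_x\to 0
$$
in $\Coh(S)$ (obtained by pushing forward along $C\hookrightarrow S$ the corresponding sequence on $C$; here $\mc{O}_x$ is the skyscraper sheaf at $x$, which agrees whether viewed on $C$ or on $S$). Applying the exact functor $\Phi$ yields a distinguished triangle
$$
\Phi(\mc{O}_C(-x))\to \Phi(\mc{O}_C)\to \Phi(\mc{O}_x)\to \Phi(\mc{O}_C(-x))[1]
$$
in $D(S)$.

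Then I would invoke the standard fact that for any distinguished triangle $\alpha\to \beta\to \gamma\to\alpha[1]$ in $D(S)$, the long exact sequence of cohomology sheaves gives $\Supp(\gamma)\subset \Supp(\alpha)\cup \Supp(\beta)$: if $y\notin \Supp(\alpha)\cup \Supp(\beta)$, localizing the long exact sequence at $y$ forces $\mc{H}^i(\gamma)_y=0$ for all $i$. Finally, Claim \ref{cla:supp_L} applied to the line bundles $\mc{O}_C(-x)$ and $\mc{O}_C$ on $C$ yields that both $\Supp(\Phi(\mc{O}_C(-x)))$ and $\Supp(\Phi(\mc{O}_C))$ are contained in $Z$, and hence so is $\Supp(\Phi(\mc{O}_x))$.

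There is no real obstacle here beyond bookkeeping, since the key geometric input (rigidity forcing the image of a line bundle on a $(-2)$-curve to live on $(-2)$-curves) has already been extracted in Claim \ref{cla:supp_L}. The only minor point to be careful about is noting that a $(-2)$-curve really is smooth and irreducible, so that $\mc{O}_C(-x)$ is a genuine line bundle on $C$ and the exact sequence above makes sense in $\Coh(S)$.
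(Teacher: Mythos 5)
Your proposal is correct and is essentially identical to the paper's own proof: both resolve $\mc{O}_x$ by the sequence $0\to\mc{O}_C(-1)\to\mc{O}_C\to\mc{O}_x\to 0$ on a $(-2)$-curve $C$ through $x$, apply $\Phi$ to get a distinguished triangle, and conclude via the support inclusion together with Claim \ref{cla:supp_L}. Your extra remarks on the support of a cone and on $\mc{O}_C(-x)$ being a genuine line bundle are just the bookkeeping the paper leaves implicit.
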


\begin{proof}
Take a $(-2)$-curve $C$ containing $x$. 
Then, there is an exact triangle
$$
\Phi(\mc{O}_C(-1))\to\Phi(\mc{O}_C)\to\Phi(\mc{O}_x),
$$ 
which implies 
$$
\Supp(\Phi(\mc{O}_x)) \subset \Supp (\Phi(\mc{O}_C(-1)))\cup \Supp (\Phi(\mc{O}_C)).
$$
This completes the proof by Claim \ref{cla:supp_L}.
\end{proof}

\begin{prop}\label{prop:S_to_Z}
Let $S$ be a smooth projective surface.
Then, there is a group homomorphism 
$$
\iota_Z\colon \Auteq_{\Kequiv} D(S)\to \Auteq D_Z(S) \quad \Phi\mapsto \Phi|_{D_Z(S)}.
$$
\end{prop}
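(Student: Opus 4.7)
The plan is to verify three things in sequence: (a) for every $\Phi\in\Auteq_{\Kequiv}D(S)$, the functor $\Phi$ sends the full subcategory $D_Z(S)$ into itself; (b) the restriction $\Phi|_{D_Z(S)}$ is in fact an autoequivalence of $D_Z(S)$, not merely an endofunctor; (c) the assignment $\Phi\mapsto\Phi|_{D_Z(S)}$ respects composition and identity. Items (b) and (c) will be essentially formal once (a) is in hand, so the bulk of the work is a devissage argument establishing (a).

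For (a), by the standard truncation triangles it suffices to prove that $\Phi(\mc{F})\in D_Z(S)$ for every coherent sheaf $\mc{F}$ on $S$ with $\Supp(\mc{F})\subset Z$. The three preceding claims already cover the "generating" cases: Claim~\ref{cla:supp_L} handles every line bundle on a $(-2)$-curve, and Claim~\ref{cla:Z_to_Z} handles every skyscraper $\mc{O}_x$ with $x\in Z$. I would then build a finite filtration $0=\mc{F}_0\subset\mc{F}_1\subset\cdots\subset\mc{F}_n=\mc{F}$ whose successive quotients all lie in this generating collection: first filter by powers of the ideal sheaf $I_{Z_{\text{red}}}$ to reduce to sheaves on $Z_{\text{red}}$; then, by noetherian induction on the set of irreducible components of $Z_{\text{red}}$ meeting $\Supp(\mc{F})$, peel off the piece supported on a chosen component $C$; finally, on each smooth rational curve $C$ any coherent sheaf is a finite extension of line bundles on $C$ and skyscrapers at closed points of $C\subset Z$. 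Applying $\Phi$ to the exact triangles attached to this filtration and using the two Claims at each step shows inductively that every $\Phi(\mc{F}_i)$, and hence $\Phi(\mc{F})$, lies in $D_Z(S)$.

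For (b), note that the inverse $\Phi^{-1}$ is again of $K$-equivalent type by Remark~\ref{rem:diagonal}(ii); applying the argument of (a) to $\Phi^{-1}$ yields $\Phi^{-1}(D_Z(S))\subset D_Z(S)$. The pair of restrictions $\Phi|_{D_Z(S)}$ and $\Phi^{-1}|_{D_Z(S)}$ are therefore mutually quasi-inverse endofunctors of $D_Z(S)$, so $\Phi|_{D_Z(S)}\in\Auteq D_Z(S)$. For (c), since both $\Phi$ and $\Psi$ preserve $D_Z(S)$, one has $(\Psi\circ\Phi)|_{D_Z(S)}=\Psi|_{D_Z(S)}\circ\Phi|_{D_Z(S)}$ tautologically, and the identity maps to the identity; this gives the desired group homomorphism $\iota_Z$.

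The main obstacle I anticipate is the devissage in (a): because the components of $Z$ may intersect in complicated configurations (so that $Z_{\text{red}}$ need not be a disjoint union of smooth curves) and because $\mc{F}$ may live on an arbitrarily thick scheme-theoretic neighborhood of $Z$, some care is required in justifying that the filtration really terminates after finitely many steps with graded pieces of the controlled form. Once this bookkeeping is done, however, the inductive application of Claims~\ref{cla:supp_L} and~\ref{cla:Z_to_Z} makes the rest routine.
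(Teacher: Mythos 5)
Your proposal is correct, but it takes a genuinely different route from the paper for the key containment $\Phi(D_Z(S))\subset D_Z(S)$. The paper's proof is a one-liner: it invokes only Claim~\ref{cla:Z_to_Z} together with \cite[Lemma 2.4]{Ue16}, the point being that for any $\alpha\in D_Z(S)$ one has $\Supp(\Phi^{\mc P}(\alpha))\subset p_2\bigl(p_1^{-1}(\Supp(\alpha))\cap\Supp(\mc P)\bigr)=\bigcup_{x\in\Supp(\alpha)}\Supp(\Phi(\mc O_x))\subset Z$, so the whole of $D_Z(S)$ is controlled by skyscrapers alone, and the cited lemma also packages the passage to an equivalence of the subcategories. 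You instead run a devissage of an arbitrary object of $D_Z(S)$ down to line bundles on $(-2)$-curves and skyscrapers at points of $Z$, using both Claim~\ref{cla:supp_L} and Claim~\ref{cla:Z_to_Z} and the fact that $D_Z(S)$ is closed under shifts and cones. Your reduction does work (coherence guarantees that some power of the ideal of $Z_{\mathrm{red}}$ kills $\mc F$, and every coherent sheaf on a smooth rational curve is an iterated extension of line bundles and skyscrapers), and it has the virtue of being self-contained, at the cost of the bookkeeping you yourself flag; the paper buys brevity by outsourcing the support estimate to the kernel-theoretic lemma. Two small points: the closed set $\Supp(\mc F)$ may a priori meet infinitely many components of $Z$, but irreducibility forces each curve component of $\Supp(\mc F)$ to be one of the $(-2)$-curves, so your induction is over a finite set. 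And for step (b), Remark~\ref{rem:diagonal}(ii) as stated concerns compositions rather than inverses; it is cleaner to note that $\Auteq_{\Kequiv}D(S)$ is a subgroup (or to use \eqref{eqn:PQ}, which gives $\Supp(\mc Q)=\Supp(\mc P)$ for the kernel $\mc Q$ of $\Phi^{-1}$) to justify applying Claims~\ref{cla:supp_L} and~\ref{cla:Z_to_Z} to $\Phi^{-1}$.
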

\begin{proof}
Claim \ref{cla:Z_to_Z} and \cite[Lemma 2.4]{Ue16} complete the proof.
\end{proof}

We define the group $\Br_Z(S)$ generated by twist functors along spherical objects supported in $Z$;
$$
\Br_Z(S)=\Span{T_{\alpha}\mid \alpha \in D_Z(S) \text{ spherical object}}(\subset \Auteq_{\Kequiv} D(S)).
$$
The following is crucial to show Theorems \ref{thm:autoeq_nontorsion} and \ref{thm:generation}.

\begin{lem}\label{lem:sheaf}
Let $S$ be a smooth projective surface.
\begin{enumerate}
\item
For a point $x\in S$ and $\Phi\in \Auteq D(S)$, 
suppose that $\Supp (\Phi(\mc{O}_x))$ is at most $1$-dimensional, and contains no $(-2)$-curves. Then,
 $\Phi(\mc{O}_x)$ is a shift of a sheaf.
\item 
Suppose that $\Phi\in \Auteq D_{Z}(S)$ preserves
the cohomology class $\ch(\mc{O}_x) \in H^4(S,\Q)$ for all points $x\in Z$.
Then, there is  an autoequivalence $\Psi\in \Br_Z(S)$, an integer  $i$ and a point $y\in Z$ 
satisfying $\Psi\circ\Phi (\mc{O}_x)\cong \mc{O}_y[i]$.  
\end{enumerate}
\end{lem}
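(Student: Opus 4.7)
For part (i), set $\alpha := \Phi(\mc{O}_x)$. Since $\Phi$ is an equivalence, $\Ext^\bullet_S(\alpha,\alpha) \cong \Ext^\bullet_S(\mc{O}_x,\mc{O}_x)$ is concentrated in degrees $\{0,1,2\}$ with dimensions $(1,2,1)$. Each cohomology sheaf $\mc{H}^j(\alpha)$ is a Calabi--Yau sheaf whose $1$-dimensional components lie on irreducible curves $C$ with $K_S \cdot C = 0$, and by hypothesis no such $C$ is a $(-2)$-curve. My plan is to run the filtration-by-cohomology spectral sequence
$$
E_2^{p,q} = \bigoplus_j \Ext^p_S(\mc{H}^j(\alpha), \mc{H}^{j+q}(\alpha)) \;\Longrightarrow\; \Ext^{p+q}_S(\alpha,\alpha),
$$
combined with Serre duality and the Calabi--Yau identity $\mc{H}^j(\alpha) \otimes \omega_S \cong \mc{H}^j(\alpha)$, which gives $\Ext^p_S(\mc{F},\mc{G}) \cong \Ext^{2-p}_S(\mc{G},\mc{F})^*$ on pairs of such sheaves. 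Suppose for contradiction that the cohomology of $\alpha$ occupies two distinct degrees $i_0 < i_1$; then the columns $q = \pm(i_1-i_0) \neq 0$ of $E_2$ contribute classes to $\Ext^\bullet_S(\alpha,\alpha)$ in degrees outside $\{0,1,2\}$, which must die under the differentials. The resulting forced cancellations feed extra classes into the degrees $0$ and $1$ of the abutment, contradicting the dimensions $1$ and $2$. Hence cohomology is concentrated in a single degree, i.e.\ $\alpha$ is a shifted sheaf. The bookkeeping is in the style of \cite[\S 3]{IU05}.

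For part (ii), set $\alpha := \Phi(\mc{O}_x)$; since $\mc{O}_x \in D_Z(S)$, also $\alpha \in D_Z(S)$ and in particular $\Supp(\alpha) \subset Z$. I would induct on a complexity invariant $\ell(\alpha) := \sum_{C,\,j} \length_C \mc{H}^j(\alpha)$, where the outer sum runs over irreducible components $C$ of $Z$ and cohomological degrees $j$. In the base case $\ell(\alpha) = 0$, $\Supp(\alpha)$ is $0$-dimensional, so \cite[Lemma~4.5]{Hu06} gives $\alpha \cong \mc{O}_y[i]$; the preservation of $\ch(\mc{O}_x) \in H^4$ forces length one, and $\Supp(\alpha) \subset Z$ places $y$ in $Z$, so $\Psi = \id$ suffices. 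In the inductive step, choose an irreducible $(-2)$-curve $C \subset Z$ meeting $\Supp(\alpha)$ nontrivially and a spherical line bundle $\mc{L} \in \Pic(C)$ with $\RHom_S(\mc{L},\alpha) \neq 0$; the defining triangle \eqref{eqn:twist_triangle},
$$
\RHom_S(\mc{L},\alpha) \Lotimes_\C \mc{L} \;\to\; \alpha \;\to\; T_\mc{L}(\alpha),
$$
relates the cohomology sheaves of $\alpha$ and $T_\mc{L}^{\pm 1}(\alpha)$. For a judicious $\mc{L}$ and sign $\varepsilon \in \{\pm 1\}$, I would show $\ell(T_\mc{L}^{\varepsilon}(\alpha)) < \ell(\alpha)$; iterating produces the required $\Psi \in \Br_Z(S)$ that reduces $\alpha$ to the base case.

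The main obstacle is the inductive step of part (ii): verifying that at each stage there exists a spherical $\mc{L}$ on some $(-2)$-curve component of $\Supp(\alpha)$ and a sign $\varepsilon$ for which the twist $T_\mc{L}^{\varepsilon}$ strictly decreases $\ell$. A single twist may simultaneously \emph{increase} the length of cohomology sheaves along $(-2)$-curves intersecting $C$, so the decrease requires careful numerical control using the pairing on $K(D_Z(S))$ and the combinatorics of the configuration of $(-2)$-curves in $Z$, in the spirit of \cite[\S 4]{IU05} and \cite{BP14}. This is presumably what necessitates the type $A$ hypothesis in Theorem~\ref{thm:generation0}. In part (i), the subtler technical point is tracking the spectral sequence differentials precisely enough to block every contribution to $\Ext^k_S(\alpha,\alpha)$ for $k$ outside $\{0,1,2\}$ without also collapsing the degree $0$ and $1$ dimensions below their prescribed values.
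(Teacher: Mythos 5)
Your part (i) contains a genuine gap: you never actually use the hypothesis that $\Supp(\Phi(\mc{O}_x))$ contains no $(-2)$-curves, yet the statement is false without it (for a $(-2)$-curve $C$ and $x\in C$, the object $T_{\mc{O}_C}(\mc{O}_x)$ has support $C$ and cohomology sheaves $\mc{O}_C(-1)$ and $\mc{O}_C$ in two distinct degrees). Moreover, the mechanism you propose for the contradiction does not work: if the nonzero cohomology sheaves of $\alpha$ have disjoint supports, all off-diagonal terms $\Ext^p_S(\mc{H}^j(\alpha),\mc{H}^{j'}(\alpha))$ with $j\ne j'$ vanish, so nothing lives in degrees outside $\{0,1,2\}$ and no cancellation is ``forced.'' The argument that does work, and the one the paper uses, extracts a numerical consequence from the hypothesis: a $1$-dimensional component $C$ of $\Supp(\mc{H}^j(\alpha))$ satisfies $K_S\cdot C=0$ and is not a $(-2)$-curve, so adjunction gives $C^2\ge 0$; Riemann--Roch then yields $\chi(\mc{H}^j,\mc{H}^j)=-c_1(\mc{H}^j)^2\le 0$, and since $\dim\Ext^2_S(\mc{H}^j,\mc{H}^j)=\dim\Hom_S(\mc{H}^j,\mc{H}^j)\ge 1$ by Serre duality for Calabi--Yau sheaves, every nonzero cohomology sheaf has $\dim\Ext^1_S(\mc{H}^j,\mc{H}^j)\ge 2$. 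The diagonal row $E_2^{1,0}=\bigoplus_j\Ext^1_S(\mc{H}^j,\mc{H}^j)$ of your spectral sequence supports no nonzero differentials on a surface, hence injects into $\Ext^1_S(\alpha,\alpha)\cong\C^2$, forcing a single nonzero cohomology sheaf. This is exactly \cite[Lemma 2.9]{BM01}, which the paper invokes after establishing the bound $\dim\Ext^1\ge 2$; your sketch is missing precisely that quantitative input.

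For part (ii), the paper gives no independent argument: it observes that the proof of the Key Proposition of \cite{IU05} applies, with the hypothesis that $\Phi$ preserves $\ch(\mc{O}_x)$ entering in the verification of \cite[Condition 7.5]{IU05}. Your outline (induct on a length-type complexity invariant, reduce it by spherical twists along line bundles on $(-2)$-curves) is a fair description of what that machinery does, and you correctly identify the hard point -- exhibiting, at every stage, a twist that strictly decreases the invariant despite possible growth along adjacent curves; this is indeed where the type-$A$ combinatorics are used. But as written this is a plan rather than a proof: the existence of the decreasing twist is the entire content of the \cite{IU05} argument, and you neither supply it nor reduce to a citable statement. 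Note also that in \cite{IU05} the $\ch(\mc{O}_x)$-preservation hypothesis is needed inside the induction, not only in the base case where you invoke it.
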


\begin{proof}
(i)
Recall that every $1$-dimensional component $C$ of 
$\Supp (\mc{H}^i(\Phi(\mathcal{O}_x)))$ satisfies $K_S\cdot C=0$ and  
is not a $(-2)$-curve, and then  we obtain $C^2\ge 0$. 
Thus the Riemann--Roch theorem yields 
$$
\chi(\mc{H}^i(\Phi(\mathcal{O}_x)),\mc{H}^i(\Phi(\mathcal{O}_x)))=-c_1(\mc{H}^i(\Phi(\mathcal{O}_x)))\cdot c_1(\mc{H}^i(\Phi(\mathcal{O}_x))) \le 0
$$
(see \cite[\S 2.2]{Ue16}). In particular, 
$$
\dim \Ext^1_{S}(\mc{H}^i(\Phi(\mathcal{O}_x)),\mc{H}^i(\Phi(\mathcal{O}_x)))\ge 2
$$
holds. Now the statement follows from \cite[Lemma 2.9]{BM01}. 

(ii)
Notice that the proof of \cite[Key proposition]{IU05} works in our situation, and that the assumption  
$\Phi^H$ preserves the class $\ch (\mc{O}_x)$  is needed in the proof of  \cite[Condition 7.5]{IU05} (see also \cite[footnote in pp.~572]{Ue16}).
Then the result follows.
\end{proof}

Let us define 
\begin{align*}
\Auteq^\dagger_{\Kequiv} D_Z(S):&=\Image \iota_Z\\
\Aut_Z(S):&=\{\varphi \in \Aut (S) \mid \varphi|_{Z}={\id}_Z \}\\
\Pic_Z(S):&=\{\mc{L}\in \Pic (S) \mid \mc{L}|_Z\cong \mc{O}_Z \}.
\end{align*}



\begin{thm}\label{thm:autoeq_nontorsion}
Let $S$ be a smooth projective surface. Then, there is a short exact sequence
\begin{align*}
1\to\Pic _Z(S)\rtimes \Aut_Z (S) \to \Auteq_{\Kequiv} D(S) 
\xrightarrow{\iota_Z} \Auteq^\dagger_{\Kequiv} D_Z(S) \to 1.
\end{align*}
\end{thm}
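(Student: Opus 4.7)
The plan is to establish the short exact sequence by verifying exactness at each position. Exactness at the right-hand term is automatic from the definition $\Auteq^\dagger_{\Kequiv} D_Z(S):=\Image\iota_Z$ combined with Proposition~\ref{prop:S_to_Z}, which makes $\iota_Z$ into a well-defined group homomorphism. The entire task therefore reduces to identifying $\ker\iota_Z$ with $\Pic_Z(S)\rtimes\Aut_Z(S)$.

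The inclusion $\Pic_Z(S)\rtimes\Aut_Z(S)\subset\ker\iota_Z$ is a direct computation: any standard autoequivalence $\varphi_*\circ(-\otimes L)$ is of $K$-equivalent type by the kernel description \eqref{eqn:standard}. If $\varphi\in\Aut_Z(S)$ and $L\in\Pic_Z(S)$, then for any $\alpha\cong i_*\beta\in D_Z(S)$ (with $i\colon Z\hookrightarrow S$), the projection formula and $L|_Z\cong\mc{O}_Z$ give $\alpha\otimes L\cong i_*(\beta\otimes L|_Z)\cong\alpha$, while $\varphi|_Z=\id_Z$ gives $\varphi_*\alpha\cong\alpha$.

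For the reverse inclusion, take $\Phi\in\ker\iota_Z$ and analyze $\Phi(\mc{O}_x)$ for every $x\in S$. For $x\in Z$ we have $\Phi(\mc{O}_x)\cong\mc{O}_x$ directly. For $x\notin Z$, the key step is to show $\Supp\Phi(\mc{O}_x)\cap Z=\emptyset$: since $\Phi^{-1}\in\ker\iota_Z$ as well, for any $y\in Z$ we have $\Phi^{-1}(\mc{O}_y)\cong\mc{O}_y$, and adjunction yields
\[
\RHom_S(\mc{O}_y,\Phi(\mc{O}_x))\cong\RHom_S(\Phi^{-1}\mc{O}_y,\mc{O}_x)\cong\RHom_S(\mc{O}_y,\mc{O}_x)=0
\]
for $x\neq y$; Serre duality ($\Ext^2(\mc{O}_y,E)\cong\Hom(E,\mc{O}_y)^\vee$) then precludes $y\in\Supp\Phi(\mc{O}_x)$. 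By Lemma~\ref{lem:Gamma_W}, the $K$-equivalent type hypothesis forces $\dim\Supp\Phi(\mc{O}_x)\leq1$; combined with the absence of $(-2)$-curves in the support, Lemma~\ref{lem:sheaf}(i) yields $\Phi(\mc{O}_x)\cong\mc{F}_x[i_x]$ for a sheaf $\mc{F}_x$ and integer $i_x$.

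By Remark~\ref{rem:diagonal}(i)--(ii), the unique $2$-dimensional component $W_0$ of $\Supp\mc{P}$ is the graph of a birational self-map $f$ of $S$ with $\Phi(\mc{O}_x)\cong\mc{O}_{f(x)}[n]$ for general $x$. The identification $\Phi(\mc{O}_x)=\mc{O}_x$ on $Z$ forces $n=0$ and $f|_Z=\id_Z$. Biregularity of $f$ then follows because any exceptional curve of a birational self-map of a smooth surface is a $(-1)$- or $(-2)$-curve: the $K$-equivalence $f^*K_S\sim K_S$ from Remark~\ref{rem:diagonal}(i) rules out $(-1)$-curves, while $f|_Z=\id_Z$ rules out $(-2)$-curves. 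Hence $f=\varphi\in\Aut_Z(S)$; setting $\Psi:=\varphi_*^{-1}\circ\Phi\in\ker\iota_Z$ produces an autoequivalence with $\Psi(\mc{O}_x)\cong\mc{O}_x$ for general $x$, and the classical Bondal--Orlov-type reconstruction (autoequivalences fixing generic skyscrapers up to isomorphism are twists by line bundles) gives $\Psi\cong(-\otimes L)$ for some $L\in\Pic(S)$. Applying $\iota_Z(\Psi)=\id$ to $\mc{O}_C$ for each $(-2)$-curve $C\subset Z$ forces $L|_C\cong\mc{O}_C$, so $L\in\Pic_Z(S)$ and $\Phi\cong\varphi_*\circ(-\otimes L)$. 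The main obstacle is the last paragraph: the biregularity argument for $f$ and the point-like reconstruction for $\Psi$ are standard in spirit but require careful deployment of the $K$-equivalent type hypothesis.
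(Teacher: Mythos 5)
Your skeleton (reduce everything to computing $\ker\iota_Z$, show $\Supp(\Phi(\mc{O}_x))\cap Z=\emptyset$ for $x\notin Z$, then apply Lemma~\ref{lem:sheaf}~(i)) runs parallel to the paper's proof, and your adjunction/Serre-duality argument for the disjointness is a legitimate substitute for the paper's appeal to Claim~\ref{cla:Z_to_Z}. The genuine gap is exactly where you flag it. The reconstruction result you invoke at the end (\cite[Corollary 5.23]{Hu06}, or \cite[Lemma 2.2]{Ue16} as the paper uses it) requires $\Psi(\mc{O}_x)$ to be a shifted skyscraper for \emph{every} point $x\in S$; ``an autoequivalence fixing generic skyscrapers is a line-bundle twist'' is not a quotable statement. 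For a special point $x\in S\setminus Z$ your argument only gives that $\Phi(\mc{O}_x)$ is a shift of a sheaf $\mc{F}_x$ with at most $1$-dimensional support disjoint from $Z$, and a priori $\dim(\mc{F}_x)=1$ is possible (the kernel could contain a component of the form $C\times D$ with $C,D$ curves, $C\not\subset Z$); ruling this out is the actual content of the step. The paper closes it with \cite[Lemma 4.3]{Br99}: since $\Phi(\mc{O}_x)$ is a shift of a sheaf for all $x\in S\setminus Z$, the restriction $\mc{P}|_{p_1^{-1}(S\setminus Z)}$ is a shift of a sheaf \emph{flat} over $S\setminus Z$, and flatness forces every fiber over $S\setminus Z$ to be $0$-dimensional because the general one is. (A quicker patch available to you: $v(\Phi(\mc{O}_x))$ is independent of $x$ and equals $\pm(0,0,1)$ by evaluating at a general point, so $c_1(\mc{F}_x)=0$; a sheaf with $1$-dimensional support has nonzero $c_1$, hence $\mc{F}_x$ is a length-one skyscraper.)

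Two further points. First, your justification of biregularity is wrong as stated: it is false that every exceptional curve of a birational self-map of a smooth surface is a $(-1)$- or $(-2)$-curve. What is true, and suffices, is that the crepancy $f^*K_S\sim K_S$ from Remark~\ref{rem:diagonal}~(i) alone forces a birational map between smooth projective surfaces to be biregular (on a common resolution the two discrepancy divisors coincide, so the two contractions contract the same curves); you do not need $f|_Z=\id_Z$ for this. Second, in the easy inclusion you write $\alpha\cong i_*\beta$ for $\alpha\in D_Z(S)$, but $D_Z(S)$ is defined by set-theoretic support, so objects need not be pushed forward from the reduced $Z$; the triviality of $(-)\otimes\mc{L}$ on all of $D_Z(S)$ for $\mc{L}\in\Pic_Z(S)$ needs slightly more care (the paper leans on \cite{Ue16} for this). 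Once the flatness (or Chern character) step is inserted, your route does close up, though the paper's own proof never passes through the birational map $f$: it establishes $\dim(\Phi(\mc{O}_x))=0$ for all $x$ and applies \cite[Lemma 2.2]{Ue16} in one stroke.
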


\begin{proof}
Take $\Phi=\Phi^{\mc{P}}\in \Auteq_{\Kequiv} D(S)$ and a point $x\in S\backslash Z$. 
Then, we know by Claim \ref{cla:Z_to_Z} that $\Supp (\Phi(\mc{O}_x))\cap Z=\emptyset$ as in \cite[Corollary 3.5]{Ue16}.
Moreover Lemma \ref{lem:sheaf} (i) implies that $\Phi (\mathcal{O}_x)$ is a shift of a sheaf.
Hence,  \cite[Lemma 4.3]{Br99}  implies that $\mathcal{P}|_{p_1^{-1}(S\backslash Z)}$ 
is a shift of a sheaf, flat over $S\backslash Z$ by $p_1$.
Consequently, we see  $\dim (\Phi(\mc{O}_x))=0$.
Furthermore, assume $\Phi\in \ker \iota_Z$. Then, we can say  $\dim (\Phi(\mc{O}_x))=0$ for all point $x\in S$, and thus
 it follows from  \cite[Lemma 2.2]{Ue16} that  
$$
\Phi \cong \phi _*\circ((-)\otimes \mathcal{L})
$$  
for $\mathcal{L}\in \Pic _Z(S)$, $\phi\in \Aut_Z(S)$.
Therefore, we obtain the result. 
\end{proof}

\begin{conj}\label{conj:generation}
Let $S$ be a smooth projective surface. Then
$$\Auteq^\dagger_{\Kequiv} D_Z(S)=\Span{\Br_Z(S),(\Aut (S)/\Aut_Z (S))}\rtimes (\Pic (S)/\Pic _Z(S))\times \Z [1].$$
Consequently,  
$$
\Auteq_{\Kequiv} D(S)=\Span{\Br_Z(S),\Pic (S)} \rtimes \Aut (S)\times \Z [1]. 
$$
\end{conj}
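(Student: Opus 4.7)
The plan is to reduce the ``consequently'' part to the first equality using Theorem \ref{thm:autoeq_nontorsion}, and then attack the first equality. Assuming the first equality, lift the given presentation of $\iota_Z(\Phi)$ to an autoequivalence of $D(S)$ built from twist functors in $\Br_Z(S)$ (which live in $\Auteq_{\Kequiv} D(S)$ by Example \ref{ex:kernel_support} and Remark \ref{rem:diagonal}(ii)), automorphisms and line bundle twists; the kernel of $\iota_Z$ is absorbed into $\Pic(S)\rtimes \Aut(S)$ since $\Pic_Z(S)\rtimes \Aut_Z(S)$ is a subgroup thereof. So the heart of the matter is the first equality.

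Fix $\Phi \in \Auteq^\dagger_{\Kequiv} D_Z(S)$. The scheme is: (Step 1) Use the cohomological Fourier--Mukai transform $\Phi^H$ acting on $\bigoplus_C \Z[C]$ and on $H^4(S,\Q)$ to find $\Psi_0\in \Br_Z(S)$ and a standard autoequivalence $\Xi\in \langle \Aut(S),\Pic(S),\Z[1]\rangle$ whose image in $\Auteq^\dagger_{\Kequiv} D_Z(S)$, when composed with $\Phi$ and $\Psi_0$, preserves the class $\ch(\mc{O}_x)\in H^4(S,\Q)$ for every $x\in Z$. (Step 2) Apply Lemma \ref{lem:sheaf}(ii) to the resulting autoequivalence: after composing with a further element $\Psi_1\in \Br_Z(S)$, we may assume $\Psi_1\circ \Psi_0\circ \Xi^{-1}\circ\Phi$ sends each point of $Z$ to a shift of a skyscraper on $Z$. (Step 3) Using Claim \ref{cla:Z_to_Z} and the argument in the proof of Theorem \ref{thm:autoeq_nontorsion} (namely that sending skyscrapers to skyscrapers forces the autoequivalence to have the form $\varphi_*\circ((-)\otimes \mc{L})$ after a shift, via \cite[Lemma 2.2]{Ue16}), conclude that the result lies in the image of $\Aut(S)\ltimes \Pic(S)\times \Z[1]$ modulo $\Aut_Z(S)\ltimes \Pic_Z(S)$.

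Step 2 is entirely furnished by Lemma \ref{lem:sheaf}(ii), and Step 3 is a clean application of the techniques already in \S 5. Thus \emph{the main obstacle is Step 1}: showing that the cohomological action of any element of $\Auteq^\dagger_{\Kequiv} D_Z(S)$ on the lattice spanned by $\{[C] : C\subset Z\}$ and on $\ch(\mc{O}_x)$ can be normalized by composing with spherical twists in $\Br_Z(S)$ and a standard autoequivalence. Each connected component of $Z$ is an ADE configuration of $(-2)$-curves (plus possibly other dual-graph shapes in the most general case), and the twist functors $T_{\mc{O}_C(k)}$ generate an action of the associated Artin--Tits braid group on the root lattice. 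For disjoint unions of type-$A$ chains, this action surjects onto the relevant Weyl-type group after quotienting by the center, which is exactly why the proof of Theorem \ref{thm:generation0} goes through by reducing to \cite{IU05} and \cite{BP14}. For type $D$ or $E$, or for infinite chains of $(-2)$-curves (which occur e.g. on K3's with infinite automorphism group, but note such $S$ have $N_S=4$ and thus are excluded from $K$-equivalent type), the combinatorics of the braid group action become subtle and one must rule out exotic symmetries of the lattice not realized by any composition of spherical twists.

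Concretely, I would first single out a set $S_Z$ of generating spherical objects on $Z$ (line bundles on each $(-2)$-curve together with extensions across nodes), record how $\Phi^H$ permutes their Mukai vectors modulo the action of $\Br_Z(S)$, and then run an induction on a length function measuring the distance from the identity in the lattice automorphism group. The base case recovers a skyscraper-preserving autoequivalence, handled by Step 3. Completing this induction uniformly for all configurations arising on surfaces with $N_S=2$ is the conjectural content; the $A$-type case of Theorem \ref{thm:generation0} is a prototype proving that the strategy works whenever the lattice action is sufficiently well understood.
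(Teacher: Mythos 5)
Your overall architecture matches the paper's treatment of this conjecture: the ``consequently'' part does reduce to the first equality via Theorem \ref{thm:autoeq_nontorsion}, and the proof of the type-$A$ case (Theorem \ref{thm:generation}) is exactly your Steps 2--3, namely Lemma \ref{lem:sheaf}(ii) followed by the criterion that an equivalence sending skyscrapers to shifted skyscrapers is standard (\cite[Corollary 5.23]{Hu06}). You are also right, and honest, that the general statement remains conjectural. The problem is that you have misplaced the difficulty. Your Step 1 --- normalizing the cohomological action so that $\ch(\mc{O}_x)$ is preserved --- is essentially free: an element of $\Auteq^\dagger_{\Kequiv} D_Z(S)$ is the restriction of a $K$-equivalent-type autoequivalence of $D(S)$, which sends a general skyscraper to a shifted skyscraper (Remark \ref{rem:diagonal}(ii)), so it acts on the point class by $\pm 1$ and is corrected by a shift; the paper disposes of this in one sentence. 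Conversely, your claim that ``Step 2 is entirely furnished by Lemma \ref{lem:sheaf}(ii)'' is where the real gap sits: that lemma is proved in the paper only by asserting that the proof of the Key Proposition of \cite{IU05} ``works in our situation,'' and that argument is precisely the delicate induction on type-$A$ chains of $(-2)$-curves which is why the type-$A$ hypothesis appears in Theorem \ref{thm:generation}. The braid-group/root-lattice combinatorics you describe in Step 1 are in the right spirit, but they belong to the categorical straightening statement of Lemma \ref{lem:sheaf}(ii) (turning a rigid point-like object back into a skyscraper by spherical twists), not to the action on cohomology. As written, your ``induction on a length function measuring the distance from the identity in the lattice automorphism group'' is entirely schematic and does not reproduce even the type-$A$ case without importing the full strength of \cite{IU05}; for $D$- and $E$-configurations it is exactly the missing content of the conjecture. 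A minor further point: the conjecture is stated for arbitrary smooth projective surfaces and concerns the subgroup $\Auteq_{\Kequiv}D(S)$, so configurations on surfaces with $N_S=4$ (e.g.\ K3 surfaces with infinitely many $(-2)$-curves) are not excluded, contrary to your parenthetical remark.
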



\begin{thm}\label{thm:generation}
Let $S$ be a smooth projective surface.
Then Conjecture \ref{conj:generation} holds true, if 
$Z$ is a disjoint union of configurations of $(-2)$-curves of type $A$.
\end{thm}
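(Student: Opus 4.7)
The overall strategy is to reduce, via Theorem \ref{thm:autoeq_nontorsion}, to analyzing the image $\Auteq^\dagger_{\Kequiv} D_Z(S)$ of $\iota_Z$. That is, given $\Phi \in \Auteq_{\Kequiv} D(S)$, the plan is to produce an element $\Psi \in \Br_Z(S)$, an automorphism $\varphi \in \Aut(S)$, a line bundle $\mc{L} \in \Pic(S)$, and an integer $n$ so that $\Psi \circ \varphi_* \circ ((-) \otimes \mc{L}) \circ \Phi \circ [n]$ lies in $\ker \iota_Z = \Pic_Z(S) \rtimes \Aut_Z(S)$. The approach extends the template of \cite{IU05, BP14}, where the type $A$ hypothesis enters crucially in controlling the cohomological action of $\Br_Z(S)$.

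Step 1 (cohomological normalization). Study $\Phi^H$ on the Mukai-vector sublattice $\Lambda_Z \subset H^*(S,\Q)$ spanned by classes $v(\alpha)$ for $\alpha \in D_Z(S)$, and in particular track $\Phi^H(v(\mc{O}_x))$ for $x \in Z$. The goal is to find $\Psi_1 \in \Br_Z(S)$ and $n \in \Z$ so that $(\Psi_1 \circ \Phi \circ [n])^H$ fixes $v(\mc{O}_x)$ for each $x \in Z$. Here the type $A$ assumption is decisive: on each connected $A_m$-chain of $(-2)$-curves, the subgroup of $\Br_Z(S)$ generated by twists along spherical line bundles on the components is a braid group whose image on $\Lambda_Z$ factors through the Weyl group $W(A_m) = S_{m+1}$ acting by the standard reflection representation, so the orbit of $v(\mc{O}_x)$ can be described combinatorially and adjusted by a product of such twists together with a shift.

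Step 2 (sheaf-level normalization). With cohomology normalized, Lemma \ref{lem:sheaf}(ii) supplies $\Psi_2 \in \Br_Z(S)$, an integer $i$, and an assignment $y\colon Z \to Z$ with $\Psi_2 \circ \Psi_1 \circ \Phi \circ [n](\mc{O}_x) \cong \mc{O}_{y(x)}[i]$ for every $x \in Z$. Absorb $[i]$ into the shift. Flatness of the kernel over the locus where point sheaves go to point sheaves (cf.\ \cite[Lemma 4.3]{Br99}), together with the type $A$ configuration of $Z$, shows the map $y$ extends to an automorphism $\varphi \in \Aut(S)$ stabilizing $Z$. Composing with $\varphi_*^{-1}$, the resulting autoequivalence $\Phi'$ satisfies $\Phi'(\mc{O}_x) \cong \mc{O}_x$ for $x \in Z$. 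For $x \in S \setminus Z$ the argument in Theorem \ref{thm:autoeq_nontorsion} shows $\Phi'(\mc{O}_x)$ is already a shift of a skyscraper sheaf; after absorbing a further line-bundle twist matching the first Chern class action, $\Phi'$ belongs to $\ker \iota_Z$, and Theorem \ref{thm:autoeq_nontorsion} assembles the decomposition.

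The main obstacle is Step 1: one must verify that the image of $\Br_Z(S)$ inside the isometry group of $\Lambda_Z$ is large enough to realize every cohomological action $\Phi^H|_{\Lambda_Z}$ that can arise. In type $A$, this reduces to transitivity of $W(A_m)$ on its roots plus an affine adjustment coming from shifts and line-bundle classes restricted to $Z$, which is tractable. For type $D$, $E$, or non-ADE configurations the orbit structure of $\Lambda_Z$ under $\Br_Z(S)$ is more intricate; classes realized by $\Phi^H$ need not lie in a single $\Br_Z(S)$-orbit, and the extra braid-group relations described in \cite{ST01} would require a finer analysis. This is the barrier that currently restricts the theorem to type $A$.
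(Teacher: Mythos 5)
Your Step 2 is essentially the paper's proof: the paper applies Lemma \ref{lem:sheaf}(ii) to send every $\mc{O}_x$, $x\in Z$, to a shifted point sheaf after composing with some $\Psi\in\Br_Z(S)$, and then concludes directly from \cite[Corollary 5.23]{Hu06} (every point sheaf, on and off $Z$, now goes to a shifted point sheaf, so the remaining autoequivalence is standard). So the back half of your plan is sound. The problem is your Step 1, which you flag as the main obstacle and leave unproved: it is in fact unnecessary, and your diagnosis of where the type $A$ hypothesis enters is misplaced. An element $\Phi$ of $\Auteq^\dagger_{\Kequiv} D_Z(S)$ is by definition the restriction of a $K$-equivalent-type autoequivalence of $D(S)$, which sends a general point sheaf to a shifted point sheaf (Remark \ref{rem:diagonal}(ii)); since $H^4(S,\Q)=\Q\cdot\ch(\mc{O}_x)$ and $v(\mc{O}_y[i])=(-1)^i\ch(\mc{O}_y)=(-1)^i\ch(\mc{O}_x)$, the cohomological action on the point class is just $(-1)^i$, and is corrected by an even shift alone. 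No braid-group or Weyl-group orbit analysis on a lattice $\Lambda_Z$ is needed to satisfy the hypothesis of Lemma \ref{lem:sheaf}(ii) --- the paper disposes of this with the single sentence ``Note that $\Phi$ preserves the class $\ch(\mc{O}_x)$.''

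Consequently, the barrier you describe at the end (transitivity of the $\Br_Z(S)$-action on cohomology classes, Weyl groups of type $D$ and $E$) is not where the type $A$ restriction actually lives. The restriction enters \emph{inside} the proof of Lemma \ref{lem:sheaf}(ii), which invokes the Key Proposition of \cite{IU05}: that result, which upgrades the purely cohomological information to the statement that $\Psi\circ\Phi(\mc{O}_x)$ is an honest shifted skyscraper for a suitable $\Psi\in\Br_Z(S)$, is only established for $A_n$-configurations of $(-2)$-curves. If you treat Lemma \ref{lem:sheaf}(ii) as a black box (as the paper's statement permits), your argument closes without Step 1 at all; if you intend to reprove it, the categorical analysis of rigid objects on the chain of $(-2)$-curves --- not the cohomological orbit computation --- is the content you would need to supply.
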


\begin{proof}
Note that $\Phi \in \Auteq^\dagger_{\Kequiv} D_Z(S)$ preserves the class $\ch (\mc{O}_x)$ for $x\in Z$.
Then for any point $x\in Z$, Lemma \ref{lem:sheaf} (ii) assures that  there is  an autoequivalence $\Psi\in \Br_Z(S)$, 
an integer  $i$ and a point $y\in Z$ 
satisfying $\Psi\circ\Phi (\mc{O}_x)\cong \mc{O}_y[i]$.  
Now the result follows from \cite[Corollary 5.23]{Hu06}.
\end{proof}

Recall that if $\Auteq D(S)$ is $K$-equivalent type, then 
$\Auteq D(S)=\Auteq_{\Kequiv}D(S)$. Therefore, Theorem \ref{thm:generation} implies Theorem \ref{thm:generation0}.


\noindent
Department of Mathematics
and Information Sciences,
Tokyo Metropolitan University,
1-1 Minamiohsawa,
Hachioji-shi,
Tokyo,
192-0397,
Japan 

{\em e-mail address}\ : \  hokuto@tmu.ac.jp

\begin{thebibliography}{BM01}

\bibitem[BB17]{BB17}
A. Bayer, T. Bridgeland,  \emph{Derived automorphism groups of K3 surfaces of Picard rank 1.} Duke Math. J.  166  (2017), 75--124.

\bibitem[Be96]{Be96}
A. Beauville,  \emph{Complex Algebraic surfaces.}
London Mathematical Society Student Texts, 34. Cambridge University Press, Cambridge, 1996. x+132 pp.

\bibitem[Br98]{Br98}
T. Bridgeland,  \emph{Fourier--Mukai transforms for elliptic surfaces.} J. Reine Angew. Math. 498 (1998), 
115--133.



\bibitem[Br99]{Br99}
T. Bridgeland,  \emph{Equivalences of triangulated categories and Fourier--Mukai transforms.} 
Bull. London Math. Soc. 31 (1999), 25--34. 


\bibitem[BM98]{BM98}
T. Bridgeland, A. Maciocia,  \emph{Fourier--Mukai transforms for quotient varieties.} math.AG/9811101. 

\bibitem[BM01]{BM01}
T. Bridgeland, A. Maciocia, \emph{Complex surfaces with equivalent derived categories.} Math. Z. 236 (2001), 677--697. 


\bibitem[BM02]{BM02}
T. Bridgeland, A. Maciocia, \emph{Fourier-Mukai transforms for K3 and elliptic fibrations.} J. Algebraic Geom. 11 (2002), 629--657. 

\bibitem[BP14]{BP14}
N. Broomhead, D. Ploog,  \emph{Autoequivalences of toric surfaces.}
Proc. of Amer. Math. Soc. 142  (2014), 1133--1146.

\bibitem[Fu98]{Fu98}
W. Fulton,  \emph{Intersection theory.} Second edition. Ergebnisse der Mathematik und ihrer Grenzgebiete. 3. Folge. A Series of Modern Surveys in Mathematics [Results in Mathematics and Related Areas. 3rd Series. A Series of Modern Surveys in Mathematics], 2. Springer-Verlag, Berlin, 1998. xiv+470 pp.

\bibitem[Ha77]{Ha77}
R. Hartshorne, \emph{Algebraic Geometry.} Springer--Verlag, Berlin Heidelberg New York, 1977.


\bibitem[Hu06]{Hu06}
D. Huybrechts,  \emph{Fourier--Mukai transforms in algebraic geometry.} Oxford Mathematical Monographs. The Clarendon Press, Oxford University Press, Oxford, 2006. viii+307 pp. 


\bibitem[IU05]{IU05}
A. Ishii, H. Uehara,  \emph{Autoequivalences of derived categories on the minimal resolutions of $A_n$-singularities on surfaces.} J. Differential Geom. 71 (2005), 385--435. 

  
\bibitem[Ka02]{Ka02}
Y. Kawamata,  \emph{D-equivalence and K-equivalence.} J. Differential Geom. 61 (2002), 147--171.

\bibitem[KMM]{KMM}
Y. Kawamata, K. Matsuda, K. Matsuki,
 \emph{Introduction to the minimal model program.} Algebraic geometry, Sendai, 1985,  283--360, Adv. Stud. Pure Math., 10, North-Holland, Amsterdam, 1987.
 
\bibitem[KO95]{KO95}
S. A. Kuleshov, D. Orlov, 
 \emph{Exceptional sheaves on Del Pezzo surfaces.} 
Russian Acad. Sci. Izv. Math. 44 (1995), 479--513.





\bibitem[Or02]{Or02}
D. Orlov,
 \emph{Derived categories of coherent sheaves on abelian varieties and equivalences between them.} 
Izv. Math.  66  (2002), 569--594 

\bibitem[Po17]{Po17}
R. Potter,  \emph{Derived autoequivalences of bielliptic surfaces.} 
arXiv:1701.01015.


\bibitem[Ue04]{Ue04}
H. Uehara, \emph{An example of Fourier-Mukai partners of minimal elliptic surfaces}. Math. Res. Lett. 11 (2004), 371--375.

\bibitem[Ue16]{Ue16}
H. Uehara,  \emph{Autoequivalences of derived categories of elliptic surfaces with non-zero Kodaira dimension}.  Algebr. Geom.  3  (2016), 543--577. 

\bibitem[Ue17]{Ue17}
H. Uehara, \emph{Fourier--Mukai partners of elliptic ruled surfaces}, 
Proc. of Amer. Math. Soc. 145 (2017),  3221--3232.
\end{thebibliography}
\end{document}